\numberwithin{equation}{section}
\newcommand{\cL}{{\mathcal L}}
\newcommand{\cM}{{\mathcal M}}
\newcommand{\cN}{{\mathcal N}}
\newcommand{\cO}{{\mathcal O}}
\newcommand{\cS}{{\mathcal S}}
\newcommand{\cU}{{\mathcal U}}
\newcommand{\cW}{{\mathcal W}}
\newcommand{\cQ}{{\mathcal Q}}
\newcommand{\cH}{{\mathcal H}}
\newcommand{\C}{{\mathbb C}}
\newcommand{\hE}{{\widehat{E}}}
\newcommand{\bE}{\overline{E}}
\newcommand{\tM}{\widetilde{M}}
\newcommand{\PP}{{\mathbb P}}
\newcommand{\bZ}{{\mathbb Z}}
\newcommand{\Sym}{\mathrm{Sym}}
\newcommand{\End}{\mathrm{End}}
\newcommand{\Ker}{\mathrm{Ker}}
\newcommand{\Aut}{\mathrm{Aut}\,}
\newcommand{\Image}{\mathrm{Im}\,}
\newcommand{\Iden}{\mathrm{Id}}
\newcommand{\Hom}{\mathrm{Hom}}
\newcommand{\rank}{\mathrm{rk}\,}
\newcommand{\Pic}{\mathrm{Pic}}
\newcommand{\isom}{\xrightarrow{\sim}}
\newcommand{\Kc}{K_{C}}
\newcommand{\Oc}{{{\cO}_{C}}}
\newcommand{\gr}{\mathrm{gr}}
\newcommand{\Quot}{\mathrm{Quot}}
\newcommand{\Quoto}{\mathrm{Quot}^\circ}
\newcommand{\IQ}{\mathrm{IQ}}
\newcommand{\IQo}{\IQ^\circ}
\newcommand{\IQoe}{\IQo_{n,e} (V)}
\newcommand{\IQome}{\IQo_{m,e} (V)}
\newcommand{\st}{{\mathrm{st}}}
\newcommand{\GL}{\mathrm{GL}}
\newcommand{\SL}{\mathrm{SL}}
\newcommand{\SO}{\mathrm{SO}}
\newcommand{\Sp}{\mathrm{Sp}}
\newcommand{\GO}{\mathrm{GO}}
\newcommand{\Spin}{\mathrm{Spin}}
\newcommand{\Or}{\mathrm{O}}
\newcommand{\Gr}{\mathrm{Gr}}
\newcommand{\LG}{\mathrm{LG}}
\newcommand{\LQ}{\mathrm{LQ}}
\newcommand{\OG}{\mathrm{OG}}
\newcommand{\SU}{\cS\cU_C}
\newcommand{\MS}{\cM\cS_C}
\newcommand{\MO}{\cM\cO_C}
\newtheorem{theorem}{{\textbf Theorem}}[section]
\newtheorem{proposition}[theorem]{{\textbf Proposition}}
\newtheorem{corollary}[theorem]{{\textbf Corollary}}
\newtheorem{lemma}[theorem]{{\textbf Lemma}}
\newtheorem{criterion}[theorem]{{\textbf Criterion}}
\newtheorem{conjecture}[theorem]{{\textbf Conjecture}}
\newtheorem{cautionit}[theorem]{{\textbf Caution}}
\newenvironment{caution}{\begin{cautionit}\rm}{\end{cautionit}}
\newtheorem{remit}[theorem]{{\textbf Remark}}
\newenvironment{remark}{\begin{remit}\rm}{\end{remit}}
\title{Low rank orthogonal bundles and quadric fibrations}
\author{Insong Choe}
\address{Department of Mathematics, Konkuk University, 1 Hwayang-dong, Gwangjin-Gu, Seoul 143-701, Korea}
\email{ischoe@konkuk.ac.kr}
\author{George H.\ Hitching}
\address{Oslo Metropolitan University, Postboks 4, St. Olavs plass, 0130 Oslo, Norway}
\email{gehahi@oslomet.no}
\subjclass[2010]{14H60; 14M17}
\keywords{Orthogonal vector bundle, curve, quadric fibration, isotropic subbundle}
\thanks{The first named author  was supported by the National Research Foundation of Korea: NRF-2020R1F1A1A01068699.}
\begin{document}

\begin{abstract} Let $C$ be a curve and $V \to C$ an orthogonal vector bundle of rank $r$. For $r \le 6$, the structure of $V$ can be described using tensor, symmetric and exterior products of bundles of lower rank, essentially due to the existence of exceptional isomorphisms between $\Spin (r , \C)$ and other groups for these $r$. We analyze these structures in detail, and in particular use them to describe moduli spaces of orthogonal bundles. Furthermore, the locus of isotropic vectors in $V$ defines a quadric subfibration $Q_V \subset \PP V$. Using familiar results on quadrics of low dimension, we exhibit isomorphisms between isotropic Quot schemes of $V$ and certain ordinary Quot schemes of line subbundles. In particular, for $r \le 6$ this gives a method for enumerating the isotropic subbundles of maximal  degree of a general $V$, when there are finitely many. \end{abstract}

\maketitle

\section{Introduction}

Let $C$ be a complex projective smooth curve of genus $g \ge 2$, and $L \to C$ a line bundle. A vector bundle $V \to C$ is said to be \textsl{$L$-valued orthogonal} if there is a nondegenerate symmetric bilinear form $\sigma \colon V^{\otimes 2} \to L$. Like symplectic bundles (defined similarly, but with the form being skew-symmetric), orthogonal bundles are interesting examples of decorated vector bundles. Moreover, they are associated bundles of principal $G$-bundles for $G$ an orthogonal group $\SO (r , \C)$, $\Or ( r , \C )$ or $\GO ( r , \C )$, and provide useful tools and intuition for questions regarding these principal bundles and their moduli. See for example \cite{Bea}, \cite{BG}, \cite{CH14}, \cite{CH15}, \cite{Serman}, \cite{CCH3} and many other works.

Orthogonal bundles and their moduli enjoy remarkable geometric properties. One reason for this is that the group $\SO (r , \C)$ is not simply connected as $\SL ( r, \C )$ and $\Sp ( 2n, \C )$ are, but has fundamental group $\bZ_2$ and universal covering given by the spin group $\Spin ( r , \C )$. The point of departure for the present work is the fact that for $3 \le r \le 6$, there are exceptional isomorphisms between $\Spin ( r , \C )$ and other familiar groups:
\[ \begin{array}{c||c|c|c|c}
 r & 3 & 4 & 5 & 6 \\ \hline
 \Spin ( r , \C ) & \SL ( 2, \C ) & \SL ( 2, \C) \times \SL ( 2, \C ) & \Sp ( 4, \C ) & \SL ( 4 , \C )
\end{array} \]
Moreover, in each case there are interesting geometric interpretations for the map $\Spin ( r , \C ) \to \SO ( r , \C )$; see for example \cite[Lectures 18--19]{FH}. Taking a cue from the existence of these isomorphisms, we describe the structures of orthogonal bundles of rank $r \le 6$ using sum and tensor operations on bundles of lower rank. This extends the classification in \cite[p.\ 185--187]{Mum} of $\SO ( r , \C )$-bundles for $r \le 4$. Of particular interest is the fact that topologically, orthogonal bundles of rank $r$ with trivial determinant are distinguished by another topological invariant, the second Stiefel--Whitney class $w_2 (V) \in H^2 ( C, \bZ_2 ) \cong \bZ_2$. This was characterized in \cite{Ser} in terms of theta characteristics of $C$, and we discuss it further in {\S} \ref{SW}. 
 Here is an overview of the structure theorems for the case $L = \Oc$ and $\det (V) \cong \Oc$ and $w_2 (V) = 0$. (The statements for the other cases are given at the relevant points in the paper; compare also with \cite[pp.\ 185-187]{Mum}.)
\[ \begin{array}{c|cc}
 \hbox{Rank} & \hbox{Structure} & \\ \hline
 2 & N \oplus N^{-1} : & N \in \Pic ( C ) \\
 3 & \Sym^2 E \otimes (\det E)^{-1} : & \hbox{$E$ of rank two} \\
 4 & E_1 \otimes E_2 : & \hbox{$E_1$, $E_2$ of rank two, $\det E_2 \cong (\det E_1 )^{-1}$} \\
 5 & \Ker ( \wedge^2 W_1 \to M ) : & \hbox{$M$ a square root of $\Oc$, and} \\
 & & \hbox{$W_1$ a rank four $M$-valued symplectic bundle} \\
 6 & \wedge^2 W : & \hbox{$W$ of rank four and trivial determinant}
\end{array} \]
These identifications imply in particular the existence of natural maps between certain moduli spaces of bundles over $C$, which are described in the course of the paper.

An important point is that when $r = 2n$ is even, our proofs rely on the existence of a rank $n$ isotropic subbundle in $V$. In \cite[{\S} 2]{CCH3}, it was shown that this is equivalent to the condition $\det V \cong L^n$ (in general, the isomorphism $V \isom V^* \otimes L$ only gives $(\det V)^2 \cong L^{2n}$). Moreover, in \cite{CH14} and \cite{CH15} it was shown that when $L = \Oc = \det V$, the invariant $w_2 (V)$ coincides with the parity of the degrees of the rank $n$ isotropic subbundles of $V$. In Lemma \ref{DetIrns} and Theorem \ref{w2deg} we offer more elementary proofs of these results.

Turning our attention to the ``quadric fibrations'' in the title: The locus of isotropic lines in fibers of an orthogonal bundle $V$ is a subfibration $Q_V \subset \PP V$ in smooth quadric hypersurfaces, giving $\PP V$ the structure of an ``enveloping bundle'' (see {\S} \ref{envelop}). For $r \le 6$, these quadrics are very well understood; see for example \cite{GH}:
\[ \begin{array}{c||c|c|c|c|c}
 r & 2 & 3 & 4 & 5 & 6 \\ \hline
 \hbox{Quadric} & \hbox{Two smooth points} & \hbox{Conic } \PP^1 & \PP^1 \times \PP^1 & \LG( 2, 4) & \Gr (2, 4)
\end{array} \]
Here $\LG(2, 4) \subset \Gr(2, 4)$ is the Lagrangian Grassmannian of two-dimensional subspaces of $\C^4$ which are isotropic with respect to a fixed nondegenerate skew-symmetric bilinear form. We use well-known properties of these quadrics to analyze the structures of orthogonal bundles, and in particular their isotropic subbundles. For $3 \le r \le 6$, it turns out that the isotropic Quot scheme  $\IQome$, which parameterizes isotropic subbundles of rank $m$ and degree $e$ in $V$, is isomorphic to a union of Quot schemes of  subbundles of certain bundles of lower rank. This allows us in particular to enumerate the maximal isotropic subbundles of a general orthogonal bundle of rank $r \le 6$, when the number is finite. This gives an analog for low rank orthogonal bundles of the enumeration results in \cite{Hol} for maximal degree subbundles of a general vector bundle, and in \cite{CCH2} for maximal degree Lagrangian subbundles of a general symplectic bundle.

Some results in the paper are proven in or would follow easily from \cite{CCH3}, \cite{CH14} and \cite{CH15}. However, our policy is, whenever possible, to give arguments which are elementary and/or use the rich and familiar geometry of the low rank situation.

The paper is organized as follows. We begin by recalling background material on orthogonal bundles and their moduli, isotropic subbundles and isotropic Quot schemes, and quadric subfibrations of projective bundles. Then we give a detailed treatment of the rank two case. This is relatively well known and straightforward, but provides a paradigm for the other cases; moreover, the rank two case has some useful corollaries for higher rank bundles. We then turn to the higher rank cases. It emerges that the results in rank three and five follow naturally from special cases of rank four and six respectively. Therefore, we treat the latter first in each case.

For the present, as mentioned above, we have restricted ourselves in the even rank cases to orthogonal bundles which admit isotropic subbundles of half rank. The structure of bundles not possessing any such subbundle is of interest, and amenable to study by similar methods. This topic will be studied in a future project.

\subsection*{Acknowledgements} We are grateful to Daewoong Cheong for stimulating conversations, and for sharing valuable knowledge of enumerative geometry. We thank Han-Bom Moon for a useful suggestion on the morphisms between moduli spaces. The second author thanks Raquel Mallavibarrena for interesting communication about enveloping bundles.

\section{Preliminaries}

\subsection{Orthogonal bundles and isotropic subbundles} \label{intro}

Here we recall background material on orthogonal bundles. For more information, we refer to \cite{Bea}, \cite{Mum}, \cite{Ram} and \cite{CCH3}.

Let $C$ be a complex projective smooth curve, and $L \to C$ a line bundle. A vector bundle $V \to C$ of rank $r$ is said to be \textsl{$L$-valued orthogonal} if there is a nondegenerate symmetric bilinear form $\sigma \colon V^{\otimes 2} \to L$; equivalently, if there is a symmetric isomorphism $V \isom V^*\otimes L$.

In particular, for such a $V$ we have $(\det V)^2 \cong L^r$. It follows that if $r = 2n+1$ is odd,
\begin{multline} \label{DetOddRank} \deg (L) \hbox{ is even, and} \det V \cong M^{2n+1} \hbox{ for a uniquely determined $M$} \\ \hbox{such that } M^2 \cong L . \end{multline}
On the other hand, if $r = 2n$ is even, then
\begin{equation} \label{DetVEta} \det V \ \cong \ L^n \otimes \eta \hbox{ for some $\eta$ satisfying } \eta^{ 2} \ \cong \ \Oc . \end{equation}
The question of which $\eta$ arise in (\ref{DetVEta}) is linked to the existence of certain isotropic subbundles, which we now discuss.

Let $V$ be $L$-valued orthogonal of rank $r$. For any subbundle $E \subset V$, the isomorphism $V \isom V^* \otimes L$ induces a sequence $0 \to E^\perp \to V \to E^* \otimes L \to 0$, where
\[ E^\perp \ := \ \{ v \in V : \sigma ( v \otimes e ) = 0 \hbox{ for all } e \in E \} \]
is the orthogonal complement of $E$ with respect to $\sigma$. A subbundle $E \subset V$ is \textsl{isotropic} if $E \subseteq E^\perp$; equivalently, if $\sigma ( E \otimes E ) = 0$.

Suppose $\rank (V) = 2n$ and $V$ contains an isotropic subbundle $E$ of rank $n$. Then $E = E^\perp$ and $V$ is an extension $0 \to E \to V \to E^* \otimes L \to 0$. In particular,
\begin{equation} \label{IrnsDet} \hbox{if $V$ admits a rank $n$ isotropic subbundle, then $\det V \cong L^n$;} \end{equation}
that is, $\eta = \Oc$ in (\ref{DetVEta}). In fact, by \cite[Lemma 2.5]{CCH3} the converse of (\ref{IrnsDet}) also holds; in Proposition \ref{DetIrns}, we shall offer an elementary proof of this fact.

Moreover, in general, not every extension $0 \to E \to V \to E^* \otimes L \to 0$ arises from an orthogonal structure. We shall use the following characterization \cite[Criterion 2.1]{Hit}.

\begin{criterion} \label{extension} Let $0 \to E \to V \to E^* \otimes L \to 0$ be an extension of vector bundles, with class $\delta(V) \in H^1 ( C, E \otimes E \otimes L^{-1} )$. Then $V$ admits an orthogonal structure with respect to which $E$ is isotropic if and only if, up to the action of $\Aut (E) \times \Aut (E^* \otimes L)$, the extension class $\delta(V)$ belongs to $H^1 ( C, \wedge^2 E \otimes L^{-1} )$. \end{criterion}

\subsection{Moduli of orthogonal bundles} 

For any line bundle $N$, let $\SU (r, N )$ be the moduli space of S-equivalence classes of semistable vector bundles of rank $r$ and determinant $N$ over $C$.  As this is noncanonically isomorphic to the moduli space of semistable principal $\SL ( 2, \C )$-bundles over $C$, by \cite[Theorem 5.9]{Rth} it is an irreducible projective variety of dimension $(r^2 - 1)(g-1)$. We consider the following closed subloci of $\SU ( r, \Oc )$ and $\SU ( r, \Oc (nx) )$ for a fixed $x \in C$.
\begin{itemize}
\item $\MO (2n, \Oc)$, the moduli space of semistable $\Oc$-valued orthogonal bundles of rank $2n$ and trivial determinant
\item $\MO (2n, \Oc(x))$, the moduli space of semistable $\Oc(x)$-valued orthogonal bundles of rank $2n$ and determinant $\Oc(nx)$
\item $\MO (2n+1, \Oc)$, the moduli space of semistable $\Oc$-valued orthogonal bundles of rank $2n+1$ and trivial determinant
\end{itemize}
We shall use the notation $\SU ( r, N )_\st$ and $\MO ( r, L )_\st$ when we wish to restrict to the stable loci. We write $F \sim F'$ to indicate that $F$ and $F'$ are S-equivalent semistable vector bundles.

Let us explain why we consider only these three kinds of moduli space. Firstly, when $r = 2n+1$, any $L$-valued orthogonal bundle is a twist by a line bundle of one which is $\Oc$-valued with trivial determinant. When $r = 2n$, any $L$-valued orthogonal bundle is a twist by a line bundle of either an $\Oc$-valued or an $\Oc (x)$-valued orthogonal bundle. Furthermore,  we shall consider only $L$-valued orthogonal bundles which admit rank $n$ isotropic subbundles. Hence in view of (\ref{IrnsDet}), we get the restriction on the determinant as above for $r = 2n$.

The following seems to be well known, but we include a proof for completeness.

\begin{lemma} Every component of the moduli spaces above is of dimension $\frac{1}{2}r(r-1)(g-1)$, where $r$ is the rank. \end{lemma}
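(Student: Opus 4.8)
The plan is to compute the dimension of each moduli space as the dimension of a tangent space at a generic (stable) point, using deformation theory. For a principal $G$-bundle $P \to C$ with $G$ reductive, the deformations are governed by the cohomology of the adjoint bundle $\mathrm{ad}(P) = P \times_G \mathfrak{g}$, and at a stable point the moduli space is smooth of dimension $\dim H^1(C, \mathrm{ad}(P))$, since $H^0(C, \mathrm{ad}(P)) = 0$ for a stable bundle. Thus I would first identify the relevant adjoint bundle in each of the three cases, then apply Riemann--Roch.

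For an $L$-valued orthogonal bundle $V$ of rank $r$, the infinitesimal automorphisms preserving the form $\sigma$ are the $\sigma$-skew-symmetric endomorphisms, i.e.\ the sections of the bundle $\wedge^2 V \otimes L^{-1}$ (equivalently, the bundle $\mathfrak{so}(V)$ associated to the Lie algebra $\mathfrak{so}(r,\C)$). Concretely, the isomorphism $V \isom V^* \otimes L$ identifies $\End(V)$ with $V \otimes V \otimes L^{-1}$, and the skew-symmetric part with respect to $\sigma$ is precisely $\wedge^2 V \otimes L^{-1}$, which has rank $\frac{1}{2}r(r-1)$. This is the key structural observation: the moduli space of orthogonal bundles is smooth at a stable point $V$ of dimension $\dim H^1(C, \wedge^2 V \otimes L^{-1})$.

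Next I would apply Riemann--Roch to $\wedge^2 V \otimes L^{-1}$. Writing $W = \wedge^2 V \otimes L^{-1}$, which has rank $\frac{1}{2}r(r-1)$, we have
\[ \dim H^1(C, W) - \dim H^0(C, W) = \rank(W)\,(g-1) - \deg(W). \]
At a stable point $H^0(C, W)$ vanishes (a nonzero section would give a nontrivial skew-symmetric endomorphism, contradicting stability of the underlying principal $\SO$-bundle). So the dimension equals $\frac{1}{2}r(r-1)(g-1) - \deg(W)$. The remaining task is to check that $\deg(W) = 0$ in each of the three cases. Here one uses that $\wedge^2 V \otimes L^{-1} \cong (\wedge^2 V \otimes L^{-1})^*$ via the orthogonal structure, so $W$ is self-dual and hence has degree zero; alternatively, one computes $\deg(\wedge^2 V) = (r-1)\deg(\det V)$ and subtracts $\frac{1}{2}r(r-1)\deg(L)$, checking case by case using \eqref{DetOddRank}, \eqref{DetVEta} and \eqref{IrnsDet} that the two terms cancel. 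For instance, in the case $r = 2n$, $L = \Oc$, $\det V \cong \Oc$, both terms vanish immediately; for $L = \Oc(x)$ one verifies the arithmetic using $\det V \cong \Oc(nx)$.

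The main obstacle I anticipate is not the dimension count itself, which is routine Riemann--Roch, but rather justifying that \emph{every} component has this dimension—in particular handling the possibility that a component consists entirely of strictly semistable (non-stable) bundles, where the smoothness argument does not directly apply. The plan to address this is to show that the stable locus is dense in every component: one can exhibit stable orthogonal bundles in each component (for example by the extension constructions of Criterion \ref{extension}, or by deforming a given semistable bundle), so that the generic point of each component is stable and the dimension formula propagates to the whole component by upper semicontinuity and the fact that the stable locus is open. A cleaner alternative, if available, is to cite that $\MO(r,L)$ is the image of a smooth moduli stack of the expected dimension, but to keep the argument elementary I would prefer the density argument. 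I would also remark that the coincidence of this dimension with $\frac{1}{2}r(r-1) = \dim \SO(r,\C)/\!\dim$-normalization is exactly $\dim_\C \mathfrak{so}(r,\C) = \binom{r}{2}$, which provides a useful sanity check on the final formula.
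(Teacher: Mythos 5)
Your computational core is correct, and it is a genuinely different route from the paper's. The paper realizes each $\MO$ as the image of certain components of the moduli of semistable $\GO(r,\C)$-bundles under the forgetful map, quotes the finiteness of that map from \cite{BS}, quotes Ramanathan's formula \cite{Rth} for the dimension $(g-1)\dim \GO(r,\C) + \dim Z(\GO(r,\C))$ of \emph{every} component of that moduli space, and then observes that fixing $L$ cuts out a locus of codimension $g$. You instead work directly with the pair $(V,\sigma)$: infinitesimal deformations with $L$ fixed are governed by $H^1(C, \wedge^2 V \otimes L^{-1})$, the bundle $\wedge^2 V \otimes L^{-1}$ is self-dual (via $V \isom V^* \otimes L$) hence of degree zero, Riemann--Roch gives $h^1 = \frac{1}{2}r(r-1)(g-1) + h^0$, and $h^0 = 0$ wherever $V$ is stable; obstructions vanish because $\dim C = 1$. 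This treats all three moduli spaces uniformly and avoids both the $\GO$-moduli and the codimension-$g$ count. (Two cosmetic points: at a stable point with extra finite automorphisms the moduli space is only a finite quotient of the deformation space, which still gives the dimension; and ``upper semicontinuity'' plays no role --- once the stable locus is dense in a component, the dimension of the component \emph{is} the dimension at a stable point.)

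The genuine gap is exactly at the step you flag, and your proposed fix does not close it. To conclude that every component has the computed dimension, you need the stable locus to be dense in every component, i.e.\ that no irreducible component consists entirely of strictly semistable bundles. Your plan --- ``exhibit stable orthogonal bundles in each component'' --- is circular as stated: to place a stable bundle in each component you must already know what the components are. The available discrete invariants ($\det V$, $w_2$) distinguish certain unions of components, but nothing in your argument rules out an extra irreducible component, with the same invariants, made up entirely of strictly semistable (e.g.\ polystable) orthogonal bundles; constructing stable bundles realizing every value of the invariants says nothing about such a hypothetical component. Closing this gap requires one of: (i) irreducibility of the moduli of semistable orthogonal bundles of fixed topological type --- but that is Ramanathan's theorem, at which point one may as well quote his dimension formula as the paper does; or (ii) a direct argument that every polystable $L$-valued orthogonal bundle, say $V_0 \perp \left( E \oplus (E^* \otimes L) \right)$, deforms as an orthogonal bundle to a stable one, e.g.\ by orthogonal extension constructions in the spirit of Criterion \ref{extension}. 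Option (ii) is feasible but is a genuine argument, not a remark, and your proposal does not supply it.
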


\begin{proof} Recall that the \textsl{conformal orthogonal group} $\GO ( r, \C )$ is the image in $\GL ( r, \C )$ of the multiplication map $\mathrm{O} ( r, \C ) \times \C^* \to \GL ( r, \C )$. As explained in \cite[{\S} 2]{BG}, the moduli space of rank $2n$ vector bundles admitting an $L$-valued symmetric form for some $L$ of degree $\ell$ is the image of certain components of the moduli space of semistable $\GO(2n, \C)$-bundles by the forgetful map to $\cU \left( r, \frac{r \ell}{2} \right)$. Furthermore, by \cite[Theorem 8.5 and Remark 8.6]{BS}, this map is finite. Thus by \cite[Theorem 5.9]{Rth}, this image has dimension
\[ (g - 1 ) \cdot \dim \GO ( r, \C ) + \dim Z ( \GO ( r, \C ) ) \ = \ \left( \frac{1}{2}r(r-1) + 1 \right) (g-1) + 1 . \]
Fixing $\ell$ and $L$, we obtain a locus of codimension $g$; that is, dimension $\frac{1}{2}r(r-1)(g-1)$ as desired. \end{proof}

\subsection{The second Stiefel--Whitney class} \label{SW}

As will be discussed later, the moduli space $\MO ( r, \Oc(x) )$ is irreducible. However, $\MO (r, \Oc )$ is in general reducible, due to the presence of another discrete invariant for orthogonal bundles which we shall now discuss.

Firstly, we dispose of a special case. The group $\SO (2, \C)$ is isomorphic to $\C^*$ via the map
\[ z \ \mapsto \ \begin{pmatrix} \frac{z + 1/z}{2} & \frac{-z + 1/z}{2 \sqrt{-1}} \\ \frac{z - 1/z}{2 \sqrt{-1}} & \frac{z + 1/z}{2} \end{pmatrix} . \]
Hence it has infinite cyclic fundamental group, unlike $\SO ( r, \C )$ for $r \ge 3$. Therefore, during this section we shall assume that $r \ge 3$.

For $r \ge 3$, let $1 \to \bZ_2 \to \Spin ( r , \C ) \to \SO ( r , \C ) \to 1$ be the exact sequence associated to the universal covering of $\SO ( r , \C )$; see for example \cite[Lecture 20]{FH}. By for example \cite[Chapter 5]{Gro58}, this gives rise to a sequence of nonabelian cohomology sets
\[ \cdots \ \to \ H^1 ( C, \bZ_2 ) \ \to \ H^1 ( C, \Spin ( r , \C ) ) \ \to \ H^1 ( C, \SO ( r , \C ) ) \ \to \ H^2 ( C, \bZ_2 ) . \]
Choosing a set of transition functions for $V$ defines an element of $H^1 ( C, \SO ( r , \C ) )$. Then $w_2 (V)$ is the image of this element in $H^2 ( C, \bZ_2 )$. By \cite[Proposition 5.7.2]{Gro58}, the class $w_2 (V)$ is trivial if and only if $V$ is obtained from a $\Spin ( r , \C )$-bundle by the extension of structure group given by $\Spin ( r , \C ) \to \SO ( r , \C )$. For $i \in \bZ_2$, we denote by $\MO^i ( r, \Oc )$ the component of $\MO ( r, \Oc )$ consisting of bundles with $w_2 ( V ) = i \in \{0,1\}$.

The class $w_2(V)$ has several properties. Firstly, by \cite[Theorem 2]{Ser}, for every theta characteristic $\kappa$ on $C$ we have
\begin{equation}
 \label{Serre} w_2 (V) \ \equiv \ h^0 ( V \otimes \kappa ) + r \cdot h^0 ( \kappa ) \mod 2 .
\end{equation}

Furthermore, it is shown in \cite{CH14} and \cite{CH15} that $w_2 (V)$ coincides with the parity of the degree of any rank $n$ isotropic subbundle of $V$, where $n = \left\lfloor \frac{\rank (V)}{2} \right\rfloor$. We now offer a more elementary proof of this fact.

\begin{theorem} \label{w2deg} Let $V$ be an $\Oc$-valued orthogonal bundle of rank $2n$ or $2n+1$ and trivial determinant. If $F \subset V$ is a rank $n$ isotropic subbundle, then $w_2 (V) \equiv \deg (F) \mod 2$. In particular, all rank $n$ isotropic subbundles of $V$ have degree of the same parity. \end{theorem}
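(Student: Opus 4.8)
The plan is to evaluate the formula~(\ref{Serre}) on an arbitrary theta characteristic $\kappa$ (so $\kappa^2 \cong \Kc$), which gives $w_2(V) \equiv h^0(V\otimes\kappa) + r\cdot h^0(\kappa) \bmod 2$, and to compute $h^0(V\otimes\kappa)$ from the isotropic structure. I would treat the even case $r = 2n$ first. Since $F$ is isotropic of rank $n$ we have $F = F^\perp$, so by the discussion preceding~(\ref{IrnsDet}) (with $L = \Oc$) there is an exact sequence $0 \to F \to V \to F^* \to 0$. Tensoring by $\kappa$ and taking the long exact cohomology sequence yields
\[ h^0(V\otimes\kappa) \ = \ h^0(F\otimes\kappa) + \left( h^0(F^*\otimes\kappa) - \rank(\delta) \right), \]
where $\delta \colon H^0(F^*\otimes\kappa) \to H^1(F\otimes\kappa)$ is the connecting map. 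Serre duality identifies $H^1(F\otimes\kappa) \cong H^0(F^*\otimes\kappa)^*$ (using $\kappa^{-1}\otimes\Kc \cong \kappa$), so $h^0(F^*\otimes\kappa) = h^1(F\otimes\kappa)$, while Riemann--Roch gives $h^0(F\otimes\kappa) - h^1(F\otimes\kappa) = \chi(F\otimes\kappa) = \deg F$. Combining these,
\[ h^0(V\otimes\kappa) \ \equiv \ \deg F + \rank(\delta) \bmod 2 . \]
As $r = 2n$ is even, the term $r\cdot h^0(\kappa)$ is even, so the even case reduces to showing that $\rank(\delta)$ is even.

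This last point is the heart of the matter, and the step I expect to be the main obstacle. Under the Serre duality identification $H^1(F\otimes\kappa) \cong H^0(F^*\otimes\kappa)^*$, the map $\delta$ becomes a bilinear form $B$ on the space $H^0(F^*\otimes\kappa)$, and $\rank(B) = \rank(\delta)$. Now $\delta$ is cup product with the extension class of $V$, which by Criterion~\ref{extension} lies in $H^1(\wedge^2 F) \subset H^1(F\otimes F)$, i.e.\ is antisymmetric under the swap of the two factors of $F\otimes F$. Tracing $B$ through the contraction $F^*\otimes F \to \Oc$ defining the Serre pairing, interchanging the two arguments of $B$ transposes the two $F^*$-contractions against the class; by its antisymmetry this produces a sign, so that
\[ B(s,t) \ = \ -\,B(t,s) . \]
Hence $B$ is skew-symmetric and $\rank(\delta) = \rank(B)$ is even, giving $w_2(V) \equiv \deg F \bmod 2$. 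The delicate part is pinning down this sign unambiguously (rather than obtaining a symmetric form); I would verify it by a careful Dolbeault or \v{C}ech bookkeeping of the cup product against an antisymmetric representative of the class. This is the orthogonal counterpart of the symmetric/skew dichotomy exploited for Lagrangian subbundles of symplectic bundles in \cite{CCH2}.

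For the odd case $r = 2n+1$ I would reduce to the even case. Writing the filtration $0 \subset F \subset F^\perp \subset V$ with graded pieces $F$, $F^\perp/F$ and $V/F^\perp \cong F^*$, one gets $\det V \cong F^\perp/F$; since $\det V \cong \Oc$, the anisotropic orthogonal line bundle $F^\perp/F$ is isomorphic to the trivial one. Form the even orthogonal bundle $W = V \oplus (\Oc,-1)$ of rank $2n+2$ and trivial determinant. A diagonal copy of $\Oc$ in $(F^\perp/F)\oplus(\Oc,-1)$ is isotropic, so its preimage $F' \subset W$ is an isotropic subbundle of rank $n+1$ with $\deg F' = \deg F$. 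Since adjoining a trivial orthogonal summand does not change the obstruction to a $\Spin$-structure, $w_2(W) = w_2(V)$, and applying the even case to $W$ gives $w_2(V) = w_2(W) \equiv \deg F' = \deg F \bmod 2$. Finally, because $w_2(V)$ depends only on $V$, the quantity $\deg F \bmod 2$ is the same for every rank $n$ isotropic subbundle $F$, which is the last assertion.
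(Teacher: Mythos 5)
Your proposal follows essentially the same route as the paper's proof: in the even case, feed the extension $0 \to F \to V \to F^* \to 0$ (with class in $H^1(C, \wedge^2 F)$ by Criterion \ref{extension}) tensored by $\kappa$ into the formula (\ref{Serre}), reduce everything via Serre duality and Riemann--Roch to the evenness of the rank of the connecting map, and in the odd case pass to $V \perp \Oc$ and lift a rank $n$ isotropic subbundle to a rank $n+1$ isotropic subbundle of the same degree. The one step you explicitly leave unfinished --- the skew-symmetry of the bilinear form $B$, which you propose to settle by ``careful Dolbeault or \v{C}ech bookkeeping'' of a sign --- is precisely where the paper has a cleaner argument that avoids any cocycle computation: the form is $B(s,t) = \left\langle [V], \mu(s \otimes t) \right\rangle$, where $\mu \colon H^0(C, F^* \otimes \kappa)^{\otimes 2} \to H^0(C, F^* \otimes F^* \otimes \Kc)$ is the Petri map and the pairing is Serre duality on $H^1(C, F \otimes F)$; since $\mu$ carries symmetric tensors into the summand $H^0(C, \Sym^2 F^* \otimes \Kc)$, while $[V] \in H^1(C, \wedge^2 F) \cong H^0(C, \wedge^2 F^* \otimes \Kc)^*$ annihilates that summand, one gets $B(s,t) + B(t,s) = 0$ immediately, with no sign-chasing. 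You may want to adopt this formulation, as it turns your flagged ``delicate part'' into a two-line argument. Your treatment of the odd case is also equivalent to the paper's: your explicit diagonal construction inside $(F^\perp/F) \oplus \Oc$ realizes concretely the paper's completion of $E$ to $\bE$ via a local trivialization, and your appeal to stability of the Spin obstruction under adding a trivial orthogonal summand can be replaced (as in the paper) by a one-line computation with (\ref{Serre}) showing $w_2(V \perp \Oc) = w_2(V)$.
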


\begin{caution} Note that we have not proven that $V$ has any such subbundle $F$. This will be shown in Proposition \ref{DetIrns}. \end{caution}

\begin{proof} Suppose firstly that $r = 2n$. By hypothesis and using Criterion \ref{extension}, there is an exact sequence $0 \to F \to V \to F^* \to 0$ such that the cohomology class $[V]$ belongs to $H^1 (C, \wedge^2 F )$. For any theta characteristic $\kappa$, this yields a cohomology sequence
\begin{equation} 0 \ \to \ H^0 ( C, F \otimes \kappa ) \ \to \ H^0 ( C, V \otimes \kappa ) \ \to \ H^0 (C,  F^* \otimes \kappa ) \ \xrightarrow{\cup [V]} \ H^1 (C,  F \otimes \kappa ) \ \to \ \cdots . \label{kappaseq} \end{equation}
Now by Serre duality, there is an identification $H^1 ( C, F \otimes \kappa ) \isom H^0 ( C, F^* \otimes \kappa )^*$. We claim that the map $\cup [V]$ is antisymmetric. To see this: It is well known that the cup product map $\cup \colon H^1 (C,  F \otimes F ) \to \Hom \left( H^0 (C,  F^* \otimes \kappa ), H^1 (C,  F \otimes \kappa ) \right)$ is dual to the Petri map
\[ \mu \colon H^0 (C,  F^* \otimes \kappa )^{\otimes 2} \ \to \ H^0 (C,  F^* \otimes F^* \otimes K_C ) . \]
Thus we must show that
\begin{equation} \label{annih} \left\langle [V] , \mu \left( \Sym^2 H^0 (C,  F^* \otimes \kappa ) \right) \right\rangle \ = \ 0 , \end{equation}
the pairing being defined by Serre duality. But since
\[ \mu \left( \Sym^2 H^0 ( C, F^* \otimes \kappa ) \right) \ \subseteq \ H^0 (C,  \Sym^2 F^* \otimes \Kc ) \]
while $[V] \in H^1 (C,  \wedge^2 F ) \cong H^0 (C,  \wedge^2 F^* \otimes \Kc )^*$, we obtain (\ref{annih}).


By the above claim, the map $\cup [V]$ in (\ref{kappaseq}) has even rank. We write
\[ h \ := \ h^0 (C,  F^* \otimes \kappa) \ = \ h^1 (C,  F \otimes \kappa ) . \]
Moreover, $\chi ( F \otimes \kappa ) 
 = \deg (F)$, so $h^0 (C,  F \otimes \kappa ) = \deg (F) + h$. By exactness of (\ref{kappaseq}), then,
\[ h^0 ( C, V \otimes \kappa ) \ = \ h^0 (C,  F \otimes \kappa) + \dim \Ker ( \cup [V] ) \ = \ \left( \deg(F) + h \right) + \left( h - \rank ( \cup [V] ) \right) . \]
Since $\rank ( \cup [V] )$ is even, $h^0 ( C, V \otimes \kappa ) \equiv \deg(F) \mod 2$. The statement now follows from (\ref{Serre}), since $r = 2n$.

Suppose now that $\rank(V) = 2n+1$. We use a technique applied frequently in \cite{CH15} and \cite[{\S} 8]{CCH3}, which will appear several times in this paper. The orthogonal direct sum $V \perp \Oc$ is $\Oc$-valued orthogonal of rank $2n+2$ and trivial determinant. Let $E$ be any rank $n$ isotropic subbundle of $V$. Using for example a local trivialization, we can complete $E$ to a rank $n+1$ isotropic subbundle $\bE$ of $V$ in two distinct ways, each one an extension $0 \to E \to \bE \to \Oc \to 0$. Thus $\deg (E) = \deg ( \bE )$. It follows by the even rank case above that
\[ \hbox{all rank $n$ isotropic subbundles of $V$ have parity equal to $w_2 (V \perp \Oc)$.} \]
On the other hand, an easy computation with (\ref{Serre}) shows that $w_2 ( V ) = w_2 ( V \perp \Oc )$. 
 The statement for $r = 2n+1$ now follows. \end{proof}

%

\subsection{Isotropic Quot schemes}

For any bundle $W$, we have the Quot scheme
\[ \Quot^{\rank(W) - m, \deg(W) - d} (W) \]
parameterizing coherent quotients of $W$ of rank $\rank(W) - m$ and degree $\deg(W) - d$; equivalently, locally free subsheaves of $W$ of rank $m$ and degree $d$. We depart from convention and denote this by $\Quot_{m, d} (W)$. We denote the open subset of saturated subsheaves by $\Quoto_{m, d} (W)$.

If $V$ is an orthogonal bundle of rank $2n$ or $2n+1$, then  for $1 \le m \le n$ we consider the closed sublocus
\[ \IQome \ := \ \left\{ [ j \colon E \hookrightarrow V ] \in \Quoto_{m, e} (V) : \hbox{$j(E)$ isotropic of rank $m$ and degree $e$} \right\} . \]
Let us say a little more about isotropic subbundles of maximal rank $n$. By \cite[Proposition 3.3]{CCH3}, the expected dimension of $\IQoe$ is
\begin{equation} \label{Inle}
\begin{cases}
 -(n-1)e - \frac{n(n-1)}{2} ( g - 1 - \ell ) & \hbox{ if } r = 2n ; \\
 -n \left( e + \frac{\ell}{2} \right) - \frac{n(n+1)}{2} ( g - 1 - \ell ) & \hbox{ if } r = 2n + 1 .
\end{cases} 
\end{equation}

\noindent Note that when $r = 2n$ and $\ell$ is even, $\IQoe$ is in general disconnected, by \cite[Theorem 1.4 (a)]{CCH3}. In Corollary \ref{rk4IQ} and Corollary \ref{rk6IsotSubbsCor}, we shall see this concretely for $r = 4$ and $r = 6$ respectively.

\subsubsection*{Zero dimensional Quot schemes} \label{ZeroDimIQ}

Of particular interest for us will be the question of enumerating the points of $\IQoe$ for a general $V$ when this scheme is finite. This corresponds to the maximal value of $e$ for which $\IQoe$ is nonempty. The analogous question for maximal subbundles of vector bundles was answered in \cite[Theorem 4.2]{Hol}, and for maximal Lagrangian subbundles of symplectic bundles in \cite{CCH2}. We quote a special case \cite[Corollary 4.3]{Hol} which we shall use later (see also \cite[Lemma 2.2]{Ox}).

\begin{theorem} \label{MaxLineSubbs} Let $W$ be a general stable bundle of rank $r$. If $\deg (W) \equiv 1-g \mod r$ then $W$ has $r^g$ maximal line subbundles of degree $\frac{1}{r} \left(\deg (W) - (r-1)(g-1) \right)$. \end{theorem}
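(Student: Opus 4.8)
The plan is to identify the maximal line subbundles of $W$ with the points of a suitable ordinary Quot scheme, to arrange that this scheme is zero-dimensional for general $W$, and then to compute its length by a deformation-invariance argument.

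First I would reformulate the problem. A line subbundle $L \subset W$ of degree $d$ is the same as a saturated rank one subsheaf, that is, a point of $\Quoto_{1, d} (W)$, with quotient $Q := W/L$ of rank $r - 1$ and degree $\deg (W) - d$. At such a point the tangent space is $\Hom (L, Q) = H^0 ( C, L^{-1} \otimes Q )$ and the obstruction space is $\mathrm{Ext}^1 (L, Q) = H^1 ( C, L^{-1} \otimes Q )$, so the expected dimension equals
\[ \chi ( L^{-1} \otimes Q ) \ = \ ( \deg (W) - rd ) - (r-1)(g-1) . \]
Setting this to zero gives exactly $d = \frac{1}{r} ( \deg(W) - (r-1)(g-1) )$, which is an integer precisely when $\deg(W) \equiv 1 - g \bmod r$ (using $(r-1)(g-1) \equiv 1-g$); moreover this is the largest $d$ for which $\chi \ge 0$, so such $L$ are indeed of maximal degree. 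Thus the theorem amounts to counting the points of the zero-dimensional scheme $\Quoto_{1,d} (W)$.

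Next I would establish the genericity needed for a clean count: for general stable $W$ the scheme $\Quoto_{1,d}(W)$ is nonempty, finite and reduced. Since $\chi (L^{-1}\otimes Q) = 0$ at every point, reducedness is equivalent to the vanishing $H^0 ( C, L^{-1} \otimes Q ) = 0$ (equivalently $H^1 = 0$), which says precisely that the maximal subbundle is as generic as possible. This is where the generality of $W$ enters, via the fact that the Segre invariant attains its generic maximal value on a dense open locus of the moduli space; I would prove the vanishing by a parameter count, showing that bundles carrying a maximal line subbundle $L$ with $H^0(L^{-1}\otimes Q) \ne 0$ sweep out a proper sublocus, in the spirit of the classical arguments of Lange and Hirschowitz on the genericity of the Segre invariant. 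I expect this transversality and reducedness step to be the main obstacle, since it requires controlling the incidence variety of pairs $(W, L)$ rather than $W$ alone.

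Finally I would carry out the enumeration. Because the actual dimension of $\Quoto_{1,d}(W)$ equals its expected dimension $0$ and each point is reduced, the number of points equals the length of the scheme, which is the degree of the zero-dimensional virtual fundamental class of the Quot scheme. This degree is invariant under deformation of $W$, so it may be computed after specialization by any convenient means; concretely, a line subbundle of degree $d$ corresponds to a section of the projective bundle associated to $W$ of fixed numerical type, so the count is a genus $g$ Gromov--Witten-type invariant of the fibre $\PP^{r-1}$. Using the quantum cohomology of $\PP^{r-1}$ (with the relation $\sigma^r = q$ on the hyperplane class $\sigma$) and the Vafa--Intriligator formula, this number evaluates to $r^g$; as a check, for $r = 2$ one recovers the classical count $2^g$ of maximal line subbundles of a general rank two bundle. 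This completes the plan.
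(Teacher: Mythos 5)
The paper does not prove this statement at all: it is quoted directly from \cite[Corollary 4.3]{Hol} (see also \cite[Lemma 2.2]{Ox}), and your plan is essentially a reconstruction of the argument in those references --- reformulation as a zero-dimensional Quot scheme $\Quoto_{1,d}(W)$, transversality/reducedness for general $W$ via genericity of the Segre invariant, and evaluation of the deformation-invariant count by quantum cohomology of $\PP^{r-1}$ and the Vafa--Intriligator formula. So your approach is correct in outline and coincides with that of the paper's cited source.
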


%
%
%
Using (\ref{Inle}), we now decide which $\IQoe$ have expected dimension zero. The following can be viewed as a special case of \cite[Theorem 1.4 and 1.5]{CCH3}.

\begin{lemma} \label{zerodim} 
Write $\ell := \deg (L)$. We assume that $\ell \in \{ 0 , \frac{r}{2} \}$ when $r$ is even, and $\ell = 0$ when $r$ is odd. Assume that $V$ is $L$-valued orthogonal of rank $r$ and general in moduli. In the range $3 \le r \le 6$, the scheme $\IQoe$  has expected dimension zero
 if the following conditions hold:
\[
\begin{array}{|r|c|} \hline 
r = 3 & e = - g + 1 \\ \hline
r = 4 & e = \ell - g + 1 \\ \hline
r = 5 & g \text{ is odd and } e =  -\frac{3}{2}(g-1) \\ \hline
r = 6 & \ell - g \text{ is odd and } e = \frac{3}{2}(\ell - g + 1) \\ \hline
\end{array}
\]
\end{lemma}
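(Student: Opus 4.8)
The plan is to read each case directly off the expected-dimension formula (\ref{Inle}), substituting the relevant value of $n$ together with the restricted values of $\ell$, setting the resulting expression equal to zero, and solving for $e$. Since the only input is the numerical formula from \cite[Proposition 3.3]{CCH3}, the argument is a bookkeeping computation rather than a geometric one; in particular the genericity hypothesis on $V$ plays no role in this purely numerical count. The one point that genuinely requires attention is the integrality of $e$, which is what produces the parity hypotheses in the higher cases.

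First I would dispatch the two odd-rank cases $r = 2n+1$, where $\ell = 0$, using the second branch of (\ref{Inle}). For $r = 3$ (so $n = 1$) the expected dimension is $-e - (g-1)$, which vanishes exactly when $e = -g+1$, with no further constraint. For $r = 5$ (so $n = 2$) it is $-2e - 3(g-1)$, vanishing when $2e = -3(g-1)$, i.e. $e = -\frac{3}{2}(g-1)$; here $e$ is an integer precisely when $g-1$ is even, that is, when $g$ is odd. Next I would treat the even-rank cases $r = 2n$ from the first branch, keeping $\ell$ general in $\{0, r/2\}$. For $r = 4$ (so $n = 2$) the formula is $-e - (g-1-\ell)$, vanishing when $e = \ell - g + 1$. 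For $r = 6$ (so $n = 3$) it is $-2e - 3(g-1-\ell)$, vanishing when $2e = -3(g-1-\ell)$, i.e. $e = \frac{3}{2}(\ell - g + 1)$, which is an integer exactly when $g-1-\ell$ is even, equivalently when $\ell - g$ is odd.

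The main (and essentially only) subtlety to flag is the contrast between the two pairs of cases: in ranks $3$ and $4$ the coefficient of $e$ in (\ref{Inle}) is $\pm 1$, so an integral solution $e$ always exists, whereas in ranks $5$ and $6$ this coefficient is $-n = -2$ and $-(n-1) = -2$ respectively, forcing $e$ to be a half-integer unless the right-hand side is even. This is precisely why the parity condition on $g$ (for $r=5$) and on $\ell - g$ (for $r=6$) must be imposed, and I would present the four cases in a table mirroring the statement, verifying in each the vanishing of (\ref{Inle}) together with the integrality of the displayed value of $e$.
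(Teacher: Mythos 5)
Your proof is correct and is essentially the paper's own argument: the lemma is stated there as a direct numerical consequence of the expected-dimension formula (\ref{Inle}), obtained exactly as you do by substituting $n = \lfloor r/2 \rfloor$ and the allowed values of $\ell$, solving for $e$, and noting that integrality of $e$ forces the parity conditions in ranks $5$ and $6$ (where the coefficient of $e$ is $-2$), while no such condition arises in ranks $3$ and $4$. Your remark that the genericity of $V$ plays no role in computing the \emph{expected} dimension is also accurate and consistent with the paper's treatment.
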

\noindent In these cases, each component (that is, point) of $\IQoe$ corresponds to an isotropic subbundle of $V$ of rank $\lfloor \frac{r}{2} \rfloor$ and degree $e$. Later we shall give enumerative results on the number of such isotropic subbundles of rank $m$, for $1 \le m \le n$.

\subsection{Quadric fibrations} \label{envelop}

Finally, we discuss a certain quadric subfibration $Q_V \subset \PP V$ for an orthogonal bundle $V$. This is closely related to the notion of an enveloping bundle as studied in \cite{LM} and \cite{LMP}, to which we refer the reader for more information. Note however that our situation is slightly different, as we begin with the vector bundle $V$ instead of the quadric fibration $Q_V$; and we do not consider $\PP V$ or $Q_V$ in a projective embedding, but only $Q_V$ relatively embedded in $\PP V$ over $C$.

Let $V \to C$ be a vector bundle of rank $r \ge 2$, and $\pi \colon \PP V \to C$ the associated projective bundle. Let $\sigma$ be a nonzero element of $H^0 ( C, \Sym^2 V^* \otimes L )\cong H^0 ( \PP V , \cO_{\PP V} (2) \otimes \pi^* L )$. We denote by $Q_V$ the divisor $( \sigma ) \subset \PP V$. It is a quadric fibration over $C$. If we further assume that $\sigma|_p$ defines a symmetric isomorphism $V|_p \to (V^*\otimes L)|_p$ for each fiber, then $\sigma$ defines an $L$-valued orthogonal structure on $V$, and $Q_V \to C$ is a fibration in \emph{smooth} quadrics. (This was observed for $r = 3$ and $C = \PP^1$ in \cite[Remark 1, p.\ 9]{LM}.) In this case, $Q_V \subset \PP V$ parameterizes lines in fibers of $V$ which are isotropic with respect to the quadratic form $\sigma$.\\
\\
A useful application of the above is:

\begin{proposition} \label{IsotLineSubb} Let $V$ be an $L$-valued orthogonal bundle of rank $r \ge 3$. Then $V$ admits an isotropic line subbundle. \end{proposition}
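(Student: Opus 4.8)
The plan is to realize an isotropic line subbundle of $V$ as a section of the quadric fibration $Q_V \to C$, and then to construct such a section using Tsen's theorem. By the discussion in {\S}\ref{envelop}, the orthogonal structure $\sigma$ gives a section of $\cO_{\PP V}(2) \otimes \pi^* L$ whose zero locus $Q_V \subset \PP V$ meets each fiber $\PP ( V|_p )$ in the smooth quadric cut out by $\sigma|_p$; and a section $s \colon C \to Q_V$ of $\pi$ corresponds to a line subbundle $N := s^* \cO_{\PP V}(-1) \subset V$ all of whose fibers lie on these quadrics, that is, to an isotropic line subbundle. Thus it suffices to produce a section of $\pi|_{Q_V} \colon Q_V \to C$.

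To do this, I would pass to the generic point $\eta$ of $C$. There $\sigma$ restricts to a nondegenerate quadratic form in $r$ variables on the $\C(C)$-vector space $V_\eta$, valued in the one-dimensional space $L_\eta$. Since $C$ is a curve over the algebraically closed field $\C$, its function field $\C(C)$ is a $C_1$ field by Tsen's theorem; hence every homogeneous form of degree $d$ in more than $d$ variables over $\C(C)$ has a nontrivial zero. As $r \ge 3 > 2$, the quadratic form therefore has a nonzero isotropic vector, equivalently $Q_V$ has a $\C(C)$-rational point. This amounts to a section of $\pi|_{Q_V}$ over some dense open subset of $C$.

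Finally I would extend this to a global section: since $Q_V \to C$ is projective, hence proper, and $C$ is a smooth curve, the valuative criterion of properness extends the rational section across each closed point of $C$, giving a genuine section $s \colon C \to Q_V$. The resulting line subbundle $N = s^* \cO_{\PP V}(-1)$ is isotropic at $\eta$, and since the composite $N \otimes N \to V \otimes V \xrightarrow{\sigma} L$ is a morphism of line bundles vanishing at the generic point, it vanishes identically; thus $N$ is isotropic everywhere. The one place where care is needed is the numerical hypothesis: it is precisely $r \ge 3$ that makes Tsen's theorem applicable, and the bound is sharp, since a binary quadratic form over $\C(C)$ can be anisotropic — consistent with the fact that a rank two orthogonal bundle need not contain an isotropic line subbundle.
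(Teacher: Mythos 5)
Your proof is correct, but it takes a genuinely different route from the paper's. Both arguments reduce the problem to producing a section of the quadric fibration $\pi|_{Q_V} \colon Q_V \to C$, and convert such a section into an isotropic line subbundle exactly as you describe. The paper obtains the section by noting that the fibers of $Q_V$ are smooth quadric hypersurfaces in $\PP^{r-1}$, hence rationally connected for $r \ge 3$, and then invoking the Graber--Harris--Starr theorem \cite{GHS} on families of rationally connected varieties over a curve. You instead use Tsen's theorem: $\C(C)$ is a $C_1$ field, so the quadratic form on $V_\eta$ has a nontrivial zero as soon as $r \ge 3 > 2$; this gives a rational section, which extends over all of $C$ because $Q_V \to C$ is proper and $C$ is a smooth curve, and the isotropy of the resulting line subbundle propagates from the generic point since $\sigma|_{N \otimes N}$ is a morphism of line bundles on an integral curve. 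Your approach is more elementary --- Tsen's theorem is classical, whereas Graber--Harris--Starr is a deep theorem --- and so it arguably fits the paper's stated policy of preferring elementary arguments even better than the paper's own proof; it also makes transparent why $r \ge 3$ is sharp, consistent with the rank two case, where Proposition \ref{DetIrns} shows that an isotropic line subbundle exists only when $\det V \cong L$. What the paper's approach buys is generality and brevity: the rational connectedness argument applies verbatim to any proper fibration with rationally connected fibers, while the $C_1$ argument is tied to the numerical condition that the degree of the defining form be smaller than the number of variables. Both proofs are complete.
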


\begin{proof} The fibers of $Q_V$ are quadric hypersurfaces in $\PP^{r-1}$, so are rationally connected for $r \ge 3$. Thus by \cite{GHS}, there is a section $C \to Q_V \cong \OG (1, V)$, defining an isotropic line subbundle of $V$. \end{proof}

\section{Rank two}

We now consider orthogonal bundles of rank two. These results are straightforward and can be obtained without mentioning quadric fibrations, but we do this as practice for the higher rank cases. Firstly, we give a variation on \cite[Proposition 2.10]{CCH3}.

\begin{theorem} \label{rk2Structure}  Let $V$ be an  $L$-valued orthogonal bundle of rank two and determinant $L$. Then 
$V \cong N \oplus N^{-1} L$ 
 for some line bundle $N$.  Moreover, $V$ has precisely two isotropic subbundles, which are isomorphic to $N$ and $N^{-1}L$ respectively. \end{theorem}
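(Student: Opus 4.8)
The plan is to first establish the splitting $V \cong N \oplus N^{-1}L$, and then read off the isotropic subbundles from the geometry of the quadric subfibration $Q_V$ of {\S}\ref{envelop}.

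\textbf{Existence of an isotropic line subbundle and the splitting.} Since $V$ has rank $2 = 2n$ with $n = 1$ and $\det V \cong L = L^n$, I would invoke the converse of (\ref{IrnsDet}) [\cite[Lemma 2.5]{CCH3}] to produce an isotropic line subbundle $N \subset V$. As $N$ is isotropic of half rank, nondegeneracy forces $N = N^\perp$, so there is an extension $0 \to N \to V \to N^{-1}L \to 0$ whose class lies in $H^1(C, N^{\otimes 2} \otimes L^{-1})$. Because $N$ is a line bundle, $\wedge^2 N = 0$; hence by Criterion \ref{extension} the existence of an orthogonal structure making $N$ isotropic forces this class into $H^1(C, \wedge^2 N \otimes L^{-1}) = 0$. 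Thus the extension splits and $V \cong N \oplus N^{-1}L$.

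\textbf{Counting isotropic subbundles via $Q_V$.} Next I would pass to the quadric subfibration $Q_V \subset \PP V$. Over each $p \in C$ the fiber is the smooth quadric in $\PP^1$; by nondegeneracy this is two distinct reduced points, namely the two isotropic lines in $V|_p$. Hence $\pi \colon Q_V \to C$ is a finite \'etale double cover. The subbundle $N$ defines a section $C \to Q_V$, whose image is then a connected component of $Q_V$; its complement is an \'etale cover of $C$ of degree $2 - 1 = 1$, i.e. a second section, corresponding to a second isotropic line subbundle $N'$. Since $N'|_p$ is the isotropic line of $V|_p$ distinct from $N|_p$, we have $N \cap N' = 0$, so $N \oplus N' \to V$ is a fiberwise isomorphism and $N' \cong V/N \cong N^{-1}L$. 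Finally, $Q_V$ is exactly the disjoint union of these two sections, and any isotropic line subbundle is a section $C \to Q_V$ with connected domain; so its image lies in a single component and it equals $N$ or $N'$. This gives precisely two isotropic subbundles, isomorphic to $N$ and $N^{-1}L$.

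\textbf{Main obstacle.} The delicate point is the uniqueness clause (``precisely two''): ruling out an isotropic subbundle that agrees with $N$ near some points of $C$ and with $N^{-1}L$ elsewhere. This is exactly what the structure of $Q_V \to C$ as an \'etale double cover settles, since a section of such a cover automatically splits it into two connected components and leaves no room for a ``mixed'' section. Everything else is formal, resting on the vanishing $\wedge^2 N = 0$ together with Criterion \ref{extension}. (One could alternatively bypass the first step's appeal to Criterion \ref{extension} entirely, obtaining $V \cong N \oplus N^{-1}L$ directly from the fiberwise splitting $V = N \oplus N'$ produced in the second step.)
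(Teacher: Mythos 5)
Your second step is fine: once one isotropic line subbundle $N$ is in hand, the observation that $Q_V \to C$ is a finite \'etale double cover, that the section defined by $N$ splits off a connected component, and that any isotropic line subbundle is a section of $Q_V$ landing (by connectedness of $C$) in a single component, correctly yields $V \cong N \oplus N'$ with $N' \cong V/N \cong N^{-1}L$ and the count of exactly two isotropic subbundles. This is a legitimate, and in fact more explicit, justification of the ``precisely two'' clause than the paper spells out.

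The problem is your first step. You produce $N$ by invoking the converse of (\ref{IrnsDet}), i.e.\ \cite[Lemma 2.5]{CCH3}. But in this paper that converse is Proposition \ref{DetIrns}, and the paper's proof of Proposition \ref{DetIrns} is an induction whose base case $n = 1$ is exactly Theorem \ref{rk2Structure}; the paper advertises this as an elementary, self-contained alternative to the proof in \cite{CCH3}. So inside the paper's logical architecture your argument is circular: the theorem you are proving is the foundation on which the paper's proof of the existence statement you cite is built. Nor can you fall back on Proposition \ref{IsotLineSubb}, whose rationally-connected-fibers argument requires rank $\ge 3$; in rank two the fibers of $Q_V$ are pairs of points, and the existence of a section of $Q_V \to C$ is precisely the nontrivial content that the hypothesis $\det V \cong L$ must buy you. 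The paper's proof gets this without assuming any isotropic subbundle exists: take a line subbundle $N$ of \emph{maximal degree}, note that $N^\perp \cong \det V \otimes N \otimes L^{-1} \cong N$, and split into cases. If $N$ is isotropic, Criterion \ref{extension} together with $\wedge^2 N = 0$ gives the splitting (this part you reproduce). If $N$ is \emph{not} isotropic, then $N \oplus N^\perp \cong N \oplus N$ maps into $V$ as a rank two subsheaf, and maximality of $\deg N$ forces this map to drop rank nowhere, whence $V \cong N \otimes \Oc^{\oplus 2}$ with $N^2 \cong L$, and the two isotropic directions in the fibers supply the two isotropic subbundles. This non-isotropic case is exactly what your proposal is missing once the appeal to \cite[Lemma 2.5]{CCH3} is disallowed.
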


\begin{proof} 
Let $N$ be a line subbundle of maximal degree in $V$. Then the orthogonal form on $V$ induces the surjective bundle map $ V \to N^{-1} L  $ whose kernel  is $N^\perp$. Thus we have $N^\perp \cong N$.

If $N$ is isotropic, Criterion \ref{extension} shows that $V$ is isomorphic to an extension $0 \to N \to V \to N^{-1} L \to 0$ determined by a class in $H^1 ( C, L^{-1} \otimes \wedge^2 N )$. As $\wedge^2 N = 0$, we have $V \cong N \oplus N^{-1} L$.

If $N$ is not isotropic,  then
$N \oplus N^\perp $ is a rank two subsheaf of $V$.  Thus in fact $N \oplus N$ is a subsheaf of $V$. As $\deg (N)$ was assumed to be maximal, this does not drop rank anywhere, so $V \cong N \oplus N$. Since $\det V \cong L$, we have $N \cong N^{-1} L$. In particular, $E \cong N \otimes \cO_C^{\oplus 2}$, where $N$ is an orthogonal line subbundle. Hence by choosing a copy of $N \subset V$ whose fiber is isotropic, we can also write $E \cong N \oplus N^{-1}L$, with respect to which the summands are the isotropic subbundles.
 \end{proof}

We now investigate the structure of $\PP V$ as an enveloping bundle, where $V$ is $L$-valued orthogonal of rank two and determinant $L$. Here the locus $Q_V$ of isotropic lines in $\PP V$ 
 is a fibration in smooth zero-dimensional quadrics; that is, pairs of points. By Theorem \ref{rk2Structure}, we have
\[ Q_V \ = \ \PP N \sqcup \PP ( N^{-1} L ) \ \cong \ C \sqcup C . \]
An isotropic line subbundle corresponds to a section of $\PP V$ belonging to $C \sqcup C$ at each point; that is, one of the copies of $C$. Thus $\IQ_{1,e} (V)^\circ$ is empty unless $e = \deg (N)$ or $e = \deg (N^{-1} L)$. (If $e = \deg (N) = \frac{\ell}{2}$ then it consists of two points.)\\
\par
As applications of our results so far, and to make the present paper more self-contained, we offer alternative proofs of some results in \cite{CCH3} and \cite{CH15} which we shall require later.

\begin{proposition} \label{DetIrns}Let $V$ be an $L$-valued orthogonal bundle of rank $2n$. Then $V$ admits an isotropic subbundle of rank $n$ if and only if $\det V \cong L^n$. \end{proposition}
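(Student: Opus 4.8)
The forward implication is already recorded in (\ref{IrnsDet}): a rank $n$ isotropic subbundle $E$ yields an exact sequence $0 \to E \to V \to E^* \otimes L \to 0$, and taking determinants gives $\det V \cong \det E \otimes (\det E)^{-1} \otimes L^n \cong L^n$. So the plan is to prove the converse: assuming $\det V \cong L^n$, I would produce a rank $n$ isotropic subbundle by induction on $n$, lowering the rank by two at each step. The base case $n = 1$ is immediate, since then $V$ has rank two and determinant $L$, and Theorem \ref{rk2Structure} already exhibits isotropic line subbundles.

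For the inductive step, suppose $n \geq 2$, so that $\rank V = 2n \geq 4$. First I would invoke Proposition \ref{IsotLineSubb} to obtain an isotropic line subbundle $E_1 \subset V$. The orthogonal form gives $0 \to E_1^\perp \to V \to E_1^* \otimes L \to 0$, and isotropy of $E_1$ means $E_1 \subseteq E_1^\perp$, with $E_1$ saturated in $E_1^\perp$ since it is saturated in $V$. I would then set $W := E_1^\perp / E_1$, a bundle of rank $2(n-1)$, and argue that $\sigma$ descends to a nondegenerate $L$-valued symmetric form on $W$. Computing with the two exact sequences gives $\det(E_1^\perp) \cong \det V \otimes E_1 \otimes L^{-1} \cong L^{n-1} \otimes E_1$ and $\det(E_1^\perp) \cong E_1 \otimes \det W$, so that $\det W \cong L^{n-1}$. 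The inductive hypothesis then yields a rank $n-1$ isotropic subbundle $\overline{F} \subset W$, and I would take $F \subset E_1^\perp \subset V$ to be its preimage under $E_1^\perp \to W$: this is a rank $n$ subbundle containing $E_1$, and it is isotropic because the descended form on $W$ satisfies $\overline{\sigma}(\overline{f_1}, \overline{f_2}) = \sigma(f_1, f_2)$ for lifts $f_i$ of $\overline{f_i}$.

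The hard part will be the routine but careful verification underlying the reduction to $W$: namely that $\sigma$ really does descend to a \emph{nondegenerate} form on $W = E_1^\perp/E_1$, which comes down to the fiberwise identity $(E_1^\perp)^\perp = E_1$ (a dimension count using nondegeneracy of $\sigma$), and that both the passage to the quotient and the preimage operation preserve saturatedness, so that $F$ is genuinely a subbundle of $V$. Granting these standard facts about an isotropic subspace inside a nondegenerate quadratic space, the induction closes and the converse follows.
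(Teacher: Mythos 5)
Your proposal is correct and follows essentially the same argument as the paper: induction on $n$ with base case given by Theorem \ref{rk2Structure}, using Proposition \ref{IsotLineSubb} to find an isotropic line subbundle $H$, passing to the rank $2(n-1)$ orthogonal bundle $H^\perp/H$ (whose determinant is $L^{n-1}$ by the same computation), and taking the preimage of the inductively obtained isotropic subbundle. The paper treats the nondegeneracy of the descended form and the saturatedness issues as standard facts, exactly as you anticipate in your final paragraph.
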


\begin{proof} (See also \cite[Lemma 2.5]{CCH3}.) The ``only if'' implication is (\ref{IrnsDet}). For ``if'', we proceed by induction on $n$. The case $n = 1$ follows from Theorem \ref{rk2Structure}. Suppose $n \ge 2$. By Proposition \ref{IsotLineSubb}, we can find an isotropic line subbundle $H \subset V$. Then $H^\perp / H$ is $L$-valued orthogonal of rank $2n - 2$. From the exact sequence $0 \to H^\perp \to V \to H^{-1} L \to 0$, we deduce that
\[ \det \left( H^\perp / H \right) \ \cong \ \det ( H^\perp ) \otimes H^{-1} \ \cong \ ( \det V ) \otimes H \otimes L^{-1} \otimes H^{-1} \ \cong \ (\det V) \otimes L^{-1} \ \cong \ L^{n-1} . \]
By induction, $H^\perp / H$ has an isotropic subbundle $\bE$ of rank $n - 1$. The inverse image of $\bar{E}$ in $H^\perp \subset V$ is a rank $n$ isotropic subbundle of $V$. \end{proof}

We now give an analog of Theorem \ref{w2deg} for the odd degree case (compare with \cite[Theorem 1.4 (b)]{CCH3}).

\begin{lemma} \label{EvenAndOddDeg} Suppose $\deg (L)$ is odd, and let $V$ be $L$-valued orthogonal of rank $2n$. Suppose $V$ satisfies the equivalent conditions of Proposition \ref{DetIrns}. Then $V$ has isotropic rank $n$ subbundles both of even degree and of odd degree. \end{lemma}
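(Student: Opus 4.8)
The plan is to reduce the statement to the rank two case, Theorem \ref{rk2Structure}, where the two isotropic line subbundles $N$ and $N^{-1}L$ automatically have degrees of opposite parity, since $\deg(N) + \deg(N^{-1}L) = \deg(L)$ is odd. Concretely, I would start from a rank $n$ isotropic subbundle $E \subset V$, which exists by Proposition \ref{DetIrns}, and produce a second such subbundle of opposite parity by modifying $E$ only in a ``rank two slot''.

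First I would choose a saturated corank one subbundle $E_0 \subset E$ of rank $n - 1$; such a subbundle exists, for instance as the kernel of a surjection of $E$ onto a quotient line bundle. Since $E_0 \subseteq E$ and $E$ is isotropic, $E_0$ is isotropic, so $E_0 \subseteq E_0^\perp$ and the form descends to a nondegenerate $L$-valued symmetric form on the rank two bundle $W := E_0^\perp / E_0$. A short computation with the sequence $0 \to E_0^\perp \to V \to E_0^* \otimes L \to 0$ gives $\det E_0^\perp \cong \det V \otimes (\det E_0) \otimes L^{-(n-1)}$, whence $\det W \cong \det V \otimes L^{-(n-1)} \cong L$, so $W$ has determinant $L$ and Theorem \ref{rk2Structure} applies.

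Now $E / E_0$ is an isotropic line subbundle of $W$, hence one of the two isotropic line subbundles furnished by Theorem \ref{rk2Structure}; call the other one $\overline{E}' \subset W$. Taking the preimage of $\overline{E}'$ under the quotient map $E_0^\perp \to W$ yields a saturated rank $n$ subbundle $E' \subset E_0^\perp \subseteq V$, which is isotropic because the induced form on $W$ vanishes on $\overline{E}'$ and all elements of $E_0^\perp$ are orthogonal to $E_0$. Using $\deg(E/E_0) + \deg(\overline{E}') = \deg(W) = \deg(L)$, I obtain
\[ \deg(E') - \deg(E) \ = \ \deg(\overline{E}') - \deg(E/E_0) \ \equiv \ \deg(L) \ \equiv \ 1 \pmod 2 , \]
so that $E$ and $E'$ have opposite parity, which proves the claim.

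The construction is entirely elementary; the place I would be most cautious is in verifying that the form genuinely descends to $W$ (this uses $E_0 \subseteq E_0^\perp$, so that the pairing of two local lifts is independent of the choices) and that the preimage $E'$ is indeed saturated and isotropic in $V$. These are the only points requiring care, and neither presents a real obstacle. I note finally that the base case $n = 1$, where $E_0 = 0$ and $W = V$, recovers Theorem \ref{rk2Structure} directly.
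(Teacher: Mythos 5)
Your proof is correct and follows essentially the same route as the paper's own argument: the paper likewise takes a rank $n-1$ subbundle $H \subset E$ (your $E_0$), passes to the rank two orthogonal bundle $H^\perp/H$ of determinant $L$, applies Theorem \ref{rk2Structure} to split off the second isotropic line subbundle $(E/H)^* \otimes L$, and lifts it to a rank $n$ isotropic subbundle $F \subset H^\perp$ whose degree differs from $\deg(E)$ by $\deg(L) \bmod 2$. The only cosmetic difference is that your parity computation is phrased via $\deg(E/E_0) + \deg(\overline{E}') = \deg(L)$ rather than the paper's explicit degree formula, and you spell out the verification that the form descends to $E_0^\perp/E_0$, which the paper leaves implicit.
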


\begin{proof} Let $E \subset V$ be a rank $n$ isotropic subbundle. Let $H \subset E$ be any rank $n-1$ subbundle, which is necessarily isotropic. Then $H^\perp / H$ is $L$-valued orthogonal of rank two and determinant $L$, and admits $E/H$ as an isotropic line subbundle. By Theorem \ref{rk2Structure}, in fact
\[ \frac{H^\perp}{H} \ \cong \ \frac{E}{H} \oplus \left( \left( \frac{E}{H} \right)^* \otimes L \right) . \]
Then $(E/H)^* \otimes L$ is of the form $F/H$ for a rank $n$ isotropic subbundle $F \subset H^\perp \subset V$, and
\[ \deg (F) \ = \ - (\deg (E) - \deg H) + \deg (L) + \deg (H) \ \equiv \ \deg (E) + \deg (L) \mod 2 . \]
As $\deg(L)$ is assumed to be odd, $\deg (F)$ and $\deg (E)$ have opposite parity. \end{proof}

\section{Rank four}

We now turn our attention to orthogonal bundles of rank four, as the rank three case turns out to follow from a special case of rank four.

\subsection{Rank four orthogonal bundles with rank two isotropic subbundles}

As noted in \cite[p.\ 186]{Mum}, given two bundles $E_1$ and $E_2$ of rank two, the pairing
\begin{equation} (E_1 \otimes E_2) \otimes (E_1 \otimes E_2) \ \to \ \wedge^2 E_1 \otimes \wedge^2 E_2 \ =: \ L \label{rk4form} \end{equation}
defines an $L$-valued orthogonal structure on $E_1 \otimes E_2$. Furthermore, for any line subbundles $N_1 \subset E_1$ and $N_2 \subset E_2$, the subbundles $N_1 \otimes E_2$ and $E_1 \otimes N_2$ are isotropic of rank two. We shall now see that this induces finite surjective morphisms of moduli spaces.

\begin{theorem} \label{rk4moduli} \quad
\begin{enumerate}
\item[(a)] The association $( F_1, F_2 ) \mapsto F_1 \otimes F_2$ defines finite surjective morphisms
\[
 \Theta_4^0 \colon \SU ( 2, \Oc) \times \SU (2, \Oc) \ \to \ \MO^0 (4, \Oc)
\]
and 
\[
 \Theta_4^1 \colon \SU ( 2, \Oc(x) ) \times \SU (2, \Oc(-x)) \ \to \ \MO^1 (4, \Oc).
\]
In particular, $\MO (4, \Oc)$ has two irreducible components.

\item[(b)] For $L \cong \Oc(x)$: The association $( F_1, F_2 ) \mapsto F_1 \otimes F_2$ defines a finite  surjective morphism
\[
\Phi_4 \colon \SU ( 2, \Oc) \times \SU (2, \Oc(x)) \to \MO ( 4 , \Oc(x) )
\]
In particular, $\MO (4, \Oc(x))$ is irreducible.
\end{enumerate}
\end{theorem}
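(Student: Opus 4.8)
The three maps are established by the same method, so I describe it uniformly, writing $\Theta$ for any of $\Theta_4^0$, $\Theta_4^1$, $\Phi_4$. The plan is to verify in turn that $\Theta$ is well defined as a morphism, that it is surjective, and that it has finite fibres; the two structural consequences then follow formally.

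\textbf{Well-definedness.} For $(F_1, F_2)$ in the relevant domain, the bundle $F_1 \otimes F_2$ carries the $L$-valued orthogonal form (\ref{rk4form}) with $L = \det F_1 \otimes \det F_2$, and $\det(F_1 \otimes F_2) = (\det F_1)^2 \otimes (\det F_2)^2 \cong L^2$. A direct check of the three cases shows that $L$ and $\det(F_1 \otimes F_2)$ come out as required: for $\Theta_4^0$ one gets $L \cong \Oc$ and trivial determinant, for $\Theta_4^1$ again $L \cong \Oc$ and trivial determinant, and for $\Phi_4$ one gets $L \cong \Oc(x)$ and $\det \cong \Oc(2x)$. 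Since $F_1$ and $F_2$ are semistable, so is $F_1 \otimes F_2$ (tensor products of semistable bundles are semistable in characteristic zero), so the image lies in the claimed moduli space. To pin down the Stiefel--Whitney class in part (a), I would compute the degree of an isotropic subbundle: by (\ref{rk4form}) the bundle $F_1 \otimes N_2$ is isotropic of rank two for any line subbundle $N_2 \subset F_2$, of degree $2\deg N_2 + \deg F_1 \equiv \deg F_1 \pmod 2$. By Theorem \ref{w2deg} this parity equals $w_2$, giving $w_2 = 0$ for $\Theta_4^0$ (where $\deg F_1 = 0$) and $w_2 = 1$ for $\Theta_4^1$ (where $\deg F_1 = 1$); for $\Phi_4$ the value line bundle $L \cong \Oc(x)$ has odd degree, so no such invariant separates the target, consistently with Lemma \ref{EvenAndOddDeg}. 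Finally, the association is functorial in families, so by the universal property of the coarse moduli space it defines a morphism; the usual non-existence of a Poincar\'e bundle on $\SU(2,\cdot)$ is immaterial here, as in \cite{BG}, because only the associated projective bundle and its quadric fibration, equivalently the orthogonal bundle with its fixed value $L$, enter the construction.

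\textbf{Surjectivity.} This is the heart of the argument, and amounts to the structure statement that every $V$ in the target has the form $F_1 \otimes F_2$. Given such a $V$, Proposition \ref{DetIrns} supplies a rank two isotropic subbundle (since $\det V \cong L^2$), and the quadric fibration $Q_V \subset \PP V$ is a fibration in smooth quadric surfaces, with fibres $\cong \PP^1 \times \PP^1$. The two rulings of these quadric surfaces organise into two $\PP^1$-subbundles of $Q_V$ over $C$, which I identify with $\PP(F_1)$ and $\PP(F_2)$ for rank two bundles $F_i$, recovered up to twist by a line bundle. Tensoring the corresponding tautological data and comparing with the Segre picture $\C^2 \otimes \C^2 = \C^4$ then yields an isomorphism $V \cong F_1 \otimes F_2$ respecting the orthogonal forms, after which I normalise $\det F_1$ by replacing $(F_1, F_2)$ with $(F_1 \otimes A, F_2 \otimes A^{-1})$ for a suitable $A$; the parity of $\deg F_1$, equivalently $w_2(V)$ by the previous paragraph, dictates whether $\det F_1$ normalises to $\Oc$ or $\Oc(x)$ and hence which of the three maps hits $V$.

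\textbf{Finiteness and consequences.} Since the domains are projective and the maps proper, it suffices for finiteness to show the fibres are finite. If $F_1 \otimes F_2 \cong F_1' \otimes F_2'$ as orthogonal bundles, then both pairs produce the same two rulings of $Q_V$, so $\{\PP(F_1), \PP(F_2)\} = \{\PP(F_1'), \PP(F_2')\}$; hence $F_i' \cong F_i \otimes A_i$ (possibly after interchanging the factors), and $F_1' \otimes F_2' \cong V$ together with the determinant normalisation forces $A_2 \cong A_1^{-1}$ and $A_1^{\otimes 2} \cong \Oc$. Thus each fibre lies in the finite set obtained from the $2^{2g}$ square roots of $\Oc$ and the possible swap, so $\Theta$ is quasi-finite, hence finite. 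As a consistency check, both sides have dimension $6(g-1)$, from the dimension of $\SU(2,\cdot)$ and the dimension computation for $\MO(4,\cdot)$ above. The consequences are then immediate: each map is a surjection from an irreducible variety, so its image is irreducible; hence $\MO(4, \Oc(x))$ is irreducible, while $\MO(4, \Oc) = \MO^0(4,\Oc) \sqcup \MO^1(4,\Oc)$ is the union of the two distinct irreducible images, giving exactly two components.

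\textbf{Main obstacle.} The principal difficulty is surjectivity: globalising the fibrewise ruling structure of the quadric surfaces $\PP^1 \times \PP^1$ over $C$ into genuine $\PP^1$-bundles, and extracting the factors $F_1, F_2$ with the correct determinants and orthogonal compatibility. Care is needed to control the global behaviour of the two rulings and to confirm that $w_2(V)$ is precisely what governs the parity of $\deg F_i$, and hence which component of the domain and target is involved.
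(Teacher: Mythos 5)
Your strategy is genuinely different from the paper's, so let me first record the contrast. The paper never proves surjectivity through a structure theorem: it defines each map on the stable locus via an \'etale cover carrying a Poincar\'e bundle, checks that S-equivalence is preserved under tensor product, extends the map over the strictly semistable boundary by the Riemann extension theorem, and then obtains \emph{finiteness} from the Drezet--Narasimhan theorem \cite{DN}: since $\SU(2,\Oc)$ has Picard number one, the pullback of an ample line bundle on the target is $\cL^a \boxtimes \cL^b$ with $a=b>0$, which is ample, so no curve is contracted. Surjectivity is then a formal consequence of finiteness plus equality of dimensions of projective varieties. You invert this logic: you prove surjectivity directly by showing every $V$ in the target decomposes as $F_1 \otimes F_2$ (recovering the factors from the two rulings of $Q_V$), and finiteness by explicit fiber analysis. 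Your decomposition argument is really a geometric proof of the paper's Theorem \ref{rk4structure} (which the paper proves instead by an extension-class computation using Criterion \ref{extension}), and it has the merit of not presupposing irreducibility of the components $\MO^i(4,\Oc)$, which the paper's dimension count implicitly uses; the paper's route, in exchange, avoids all delicate fiber analysis.

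There are, however, genuine gaps in your execution. First, the definition of the morphism: the appeal to ``the universal property of the coarse moduli space'' does not apply as stated, because there is no family of bundles over $\SU(2,\Oc)\times\SU(2,\Oc)$ to feed into that universal property (no Poincar\'e bundle exists), and one must also verify that $F_1 \sim F_1'$, $F_2 \sim F_2'$ implies $F_1 \otimes F_2 \sim F_1' \otimes F_2'$ so that the association is even well defined on S-equivalence classes. This is precisely the content of the paper's \'etale-cover-plus-Riemann-extension argument, and it cannot be waved away by saying only the projective bundle matters. Second, and more seriously, your finiteness argument only establishes quasi-finiteness over the stable locus: the claim that an isomorphism $F_1 \otimes F_2 \cong F_1' \otimes F_2'$ identifies the two rulings presupposes that it respects the quadric fibrations, which holds when $V$ is stable (the orthogonal form is then unique up to scalar) but is unclear otherwise; moreover, points of the boundary are S-equivalence classes, so fibers there must be analyzed on graded objects (e.g.\ $F_1 \sim N \oplus N^{-1}$ gives $\gr(F_1\otimes F_2) \cong (N\otimes F_2)\oplus(N^{-1}\otimes F_2)$, and one must bound the pairs producing a given graded bundle). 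Properness plus quasi-finiteness over a dense open does \emph{not} imply finiteness --- a blowup is a counterexample --- so this step is where your finiteness claim currently fails. Third, a smaller point: the assertion that the two fiberwise rulings ``organise into two $\PP^1$-subbundles'' needs justification, since the relative variety of lines in $Q_V \to C$ is a priori a $\PP^1$-fibration over an \'etale double cover of $C$ whose monodromy could interchange the rulings; here the rank two isotropic subbundle supplied by Proposition \ref{DetIrns} rescues you, as it gives a section of that double cover and forces it to split. All three gaps are fixable, but they are exactly where the real work lies, and the parity bookkeeping via Theorem \ref{w2deg}, which you do carry out correctly, is the easy part.
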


\begin{proof} Let us firstly define the morphism $\Theta_4^0$. As $\C$ has characteristic zero, $F_1 \otimes F_2$ is semistable if $F_1$ and $F_2$ are so. Therefore, passing to a suitable \'etale cover, we can define $\Theta_4^0$ on the open subset $\SU ( 2, \Oc)_\st \times \SU (2, \Oc)_\st$ using a Poincar\'e bundle. Note that by Theorem \ref{w2deg}, the images of $\Theta_4^0$ and $\Theta_4^1$ are contained in $\MO^0 (4, \Oc)$ and $\MO^1 (4, \Oc)$, respectively.

Now it is straightforward to check that if $F_1 \sim F_1'$ and $F_2 \sim F_2'$, then $F_1 \otimes F_2 \sim F_1' \otimes F_2'$. Hence, as any semistable bundle is a limit of stable ones, it follows that $\Theta_4^0$ has a continuous extension to $\SU ( 2, \Oc) \times \SU (2, \Oc)$. Now since $\Theta_4^0$ is algebraic on $\SU ( 2, \Oc)_\st \times \SU (2, \Oc)_\st$, it is bounded on a neighborhood of any point. Therefore, by the Riemann extension theorem, the continuous extension to  $\SU ( 2, \Oc) \times \SU (2, \Oc)$ given by the association $( F_1, F_2 ) \mapsto F_1 \otimes F_2$ is   a morphism.

Now by \cite[Th\'eor\`eme B (b)]{DN}, the moduli space $\SU ( 2 , \Oc )$ has Picard number 1. Let $\cN$ be any very ample line bundle on $\MO^0 ( 4, \Oc )$. Then $\left( \Theta_4^0 \right)^* \cN \cong \cL^a \boxtimes \cL^b$ for some integers $a, b$, where $\cL$ is the ample generator of $\Pic \left( \SU ( 2, \Oc ) \right)$. Since $\Theta_4^0$ is symmetric, we have $a = b > 0$. Therefore, $\Theta_4^0$ must be a finite morphism. As $\SU ( 2, \Oc) \times \SU (2, \Oc)$ and $\MO^0 (4, \Oc)$ are projective varieties of the same dimension,  $\Theta_4^0$ is surjective.

Similar arguments show that $\Theta_4^1$ and $\Phi_4$ are finite surjective morphisms. (Note that since $\SU ( 2, \Oc ( \pm x ))$ is a fine moduli space, we can define $\Theta_4^1$ without using the Riemann extension theorem.)  
The irreducibility statements follow from the fact that $\SU ( 2, L )$ is irreducible. \end{proof}

It is natural to ask if an arbitrary (possibly unstable) $L$-valued orthogonal bundle of rank four which admits an isotropic subbbundle of rank two can be obtained in this way. The  answer is Yes, and the proof of the following theorem shows how to find the expression(s).

\begin{theorem} \label{rk4structure} Let $V$ be a rank four $L$-valued orthogonal bundle. Then $V$ admits a rank two isotropic subbundle 
if and only if $V$ is isomorphic to $E_1 \otimes E_2$ for some rank two bundle $E_2$ with $\det E_2 \cong \det E_1^* \otimes L$. \end{theorem}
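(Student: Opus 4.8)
The plan is to prove the two implications separately. The ``if'' direction is immediate, so the real work lies in the ``only if'' direction, which I would carry out by an explicit construction of $E_1$ and $E_2$ using extension theory.

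For the ``if'' direction, suppose $V \cong E_1 \otimes E_2$ with $\det E_2 \cong \det E_1^* \otimes L$. Then $\wedge^2 E_1 \otimes \wedge^2 E_2 \cong \det E_1 \otimes \det E_2 \cong L$, so the pairing (\ref{rk4form}) equips $V$ with an $L$-valued orthogonal structure. Choosing any line subbundle $N_1 \subset E_1$ (which exists since $E_1$ has rank two), the subbundle $N_1 \otimes E_2 \subset V$ is isotropic of rank two, as noted after (\ref{rk4form}). This settles the easy direction.

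For ``only if'', suppose $E \subset V$ is a rank two isotropic subbundle, so $E = E^\perp$ and $V$ sits in an extension $0 \to E \to V \to E^* \otimes L \to 0$. By Criterion \ref{extension} its class $\xi$ lies in $H^1(C, \wedge^2 E \otimes L^{-1}) = H^1(C, \det E \otimes L^{-1})$. I would then build the factors directly: set $E_2 := E$, and let $E_1$ be the rank two bundle given by the extension
\[ 0 \to \Oc \to E_1 \to (\det E)^{-1} \otimes L \to 0 \]
whose class equals $\xi$ under the canonical identification $\mathrm{Ext}^1((\det E)^{-1} \otimes L, \Oc) \cong H^1(C, \det E \otimes L^{-1})$. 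Tensoring this sequence by $E_2 = E$ and using $(\det E)^{-1} \otimes E \cong E^*$ yields an extension
\[ 0 \to E \to E_1 \otimes E_2 \to E^* \otimes L \to 0 , \]
so that both $V$ and $E_1 \otimes E_2$ are extensions of $E^* \otimes L$ by $E$. The determinant bookkeeping is then automatic: $\det E_1 \cong (\det E)^{-1} \otimes L$ and $\det E_2 \cong \det E$, hence $\det E_2 \cong \det E_1^* \otimes L$, and the form (\ref{rk4form}) on $E_1 \otimes E_2$ is again $L$-valued.

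The crux, and the step I expect to require the most care, is to show that these two extensions have the \emph{same} class. The class of $E_1 \otimes E_2$ is the image of $\xi$ under tensoring by $\mathrm{id}_{E_2}$, that is, under the map $H^1(C, \det E \otimes L^{-1}) \to H^1(C, E \otimes E \otimes L^{-1})$ induced by $\mathrm{id}_E \in \End E$. The key observation is that, under the rank-two isomorphism $\End E \cong E \otimes E \otimes (\det E)^{-1}$, the identity $\mathrm{id}_E$ corresponds to a generator of the \emph{antisymmetric} summand $\wedge^2 E \otimes (\det E)^{-1} \cong \Oc$; this is the standard splitting of $\End E$ into scalar and trace-free parts, matched with $E \otimes E = \wedge^2 E \oplus \Sym^2 E$ (scalars to $\wedge^2 E$, trace-free to $\Sym^2 E$). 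Consequently the class of $E_1 \otimes E_2$ lands in the orthogonal summand $H^1(C, \wedge^2 E \otimes L^{-1})$ and coincides there with $\xi$, so $V \cong E_1 \otimes E_2$ as extensions.

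It remains to promote this isomorphism of extensions to an isometry. Since $V$ and $E_1 \otimes E_2$ are both $L$-valued orthogonal with $E$ isotropic and realize the same class $\xi \in H^1(C, \wedge^2 E \otimes L^{-1})$, their orthogonal structures correspond up to the action of $\Aut(E) \times \Aut(E^* \otimes L)$ appearing in Criterion \ref{extension}; absorbing this automorphism into the extension isomorphism gives the desired isometry. (Alternatively one can argue geometrically: the fibers of $Q_V$ are smooth quadric surfaces $\cong \PP^1 \times \PP^1$, whose two rulings produce $\PP^1$-bundles $\PP E_1, \PP E_2$ over $C$ realizing $Q_V$ as the relative Segre quadric in $\PP V$, which forces $V \cong E_1 \otimes E_2$; but the extension computation above is more elementary and avoids invoking Tsen's theorem.)
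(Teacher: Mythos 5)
Your proposal is correct and is essentially the paper's own argument: the paper likewise builds the second tensor factor as the rank-two extension $0 \to \Oc \to E_2 \to \det E_1^* \otimes L \to 0$ with the same class $\delta$, tensors back, and uses the rank-two identification matching $(\Oc \cdot \Iden) \otimes \det E_1$ with $\wedge^2 E_1$ (and $\End_0 \otimes \det E_1$ with $\Sym^2 E_1$) to conclude the two extension classes agree, then invokes Criterion \ref{extension} for the orthogonal structure. The only differences are cosmetic: you swap the roles of the labels (your $E_2$ is the isotropic subbundle, the paper's is $E_1$), and your parenthetical geometric alternative via the two rulings of $Q_V$ is extra.
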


\begin{proof} One implication follows from  (\ref{rk4form}) and the discussion after it. Conversely, suppose $E_1 \subset V$ is a rank two isotropic subbundle. Then Criterion \ref{extension} shows that $V$ is isomorphic to an extension $0 \to E_1 \to V \to E_1^* \otimes L \to 0$ defined by a class
\[ \delta \ \in \ H^1 (C,  \wedge^2 E_1 \otimes L^*) \ \subset \ H^1 (C,  E_1 \otimes E_1 \otimes L^*) . \]
Now $\delta$ also defines an extension of line bundles $0 \to \Oc \to E_2 \to \det E_1^* \otimes L \to 0$. Tensoring by $E_1$, we obtain an extension 
$0 \to E_1 \to E_1 \otimes E_2 \to E_1 \otimes \det E_1^*  \otimes L \to 0$ of class
\[ \Iden_{E_1} \otimes \delta \ \in \ H^1 (C,  (\Oc \cdot \Iden_{E_1} ) \otimes \det E_1 \otimes L^*) \ \subset \ H^1 (C,  \End ( E_1 ) \otimes \det E_1 \otimes L^*) . \]
Now since $E_1$ has rank two, there is an isomorphism $\Hom ( E_1 , E_1 \otimes \det E_1 ) \isom E_1 \otimes E_1$ inducing an isomorphism of split exact sequences
\[ \xymatrix{ 0 \ar[r] & (\Oc \cdot \Iden_{E_1}) \otimes \det E_1 \ar[r] \ar[d]^\wr & \End (E_1) \otimes \det E_1 \ar[r] \ar[d]^\wr & \End_0 (E_1) \otimes \det E_1 \ar[r] \ar[d]^\wr & 0 \\
0 \ar[r] & \wedge^2 E_1 \ar[r] & E_1 \otimes E_1 \ar[r] & \Sym^2 E_1 \ar[r] & 0 . } \]
Under the induced identification $H^1 ( C, \wedge^2 E_1 \otimes L^*) \isom H^1 (C,  \Oc \cdot \Iden_{E_1} \otimes \det E_1  \otimes L^* )$, the extension class $\Iden_{E_1} \otimes \delta$ corresponds simply to $\delta$. This shows that $V$ and $E_1 \otimes E_2$ are isomorphic vector bundles. Also the natural orthogonal structure of $V$ induced from the extension $\delta$ coincides with that of $E_1 \otimes E_2$ in view of Criterion \ref{extension}. \end{proof}

\subsection{Quadric fibration and isotropic subbundles}

Let $V = E_1 \otimes E_2$ be an $L$-valued orthogonal bundle as above. A computation using (\ref{rk4form}) shows that the isotropic vectors in $E_1 \otimes E_2$ are exactly those of the form $e_1 \otimes e_2$ for $e_i \in E_i$. 
 Therefore:

\begin{proposition} \label{rk4envelop} The quadric subfibration $Q_V \subset \PP V$ parameterizing isotropic lines is the image of the relative Veronese embedding  $\PP E_1 \times_C \PP E_2 \ \hookrightarrow \ \PP ( E_1 \otimes E_2 )$. \end{proposition}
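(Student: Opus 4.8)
The plan is to deduce the proposition directly from the computation recorded just before its statement, namely that the isotropic vectors of $E_1 \otimes E_2$ are exactly the decomposable tensors $e_1 \otimes e_2$ with $e_i \in E_i$. The classical fact underlying everything is that the smooth quadric surface in $\PP^3$ is the image of the embedding $\PP^1 \times \PP^1 \hookrightarrow \PP^3$ parameterizing the rank one tensors in $\C^2 \otimes \C^2$; the proposition is merely the family version of this over $C$. So the genuine task is to upgrade the pointwise statement into an equality of subfibrations of $\PP V$, keeping track of the scheme structure.

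First I would fix a point $p \in C$ and read off the fiberwise picture. Choosing bases of $E_1|_p$ and $E_2|_p$, an element $v \in V|_p = E_1|_p \otimes E_2|_p$ becomes a $2 \times 2$ matrix of coefficients, and a short computation with (\ref{rk4form}) identifies the quadratic form on $v$ with the determinant of this matrix, up to a nonzero scalar. Hence $v$ is isotropic precisely when the matrix has rank at most one, i.e. precisely when $v = e_1 \otimes e_2$ is decomposable. Passing to $\PP V|_p$, a line $[v]$ lies on the fiber of $Q_V$ over $p$ if and only if $[v]$ lies in the image of the map $\PP(E_1|_p) \times \PP(E_2|_p) \to \PP(V|_p)$, $([e_1],[e_2]) \mapsto [e_1 \otimes e_2]$, which is exactly the fiber over $p$ of the relative Veronese embedding in the statement.

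Next I would globalize. The assignment $([e_1],[e_2]) \mapsto [e_1 \otimes e_2]$ defines a morphism of $C$-schemes $s \colon \PP E_1 \times_C \PP E_2 \to \PP(E_1 \otimes E_2) = \PP V$, and it is a closed immersion, as can be checked locally on $C$ after trivializing $E_1$ and $E_2$, where it becomes fiberwise the classical embedding $\PP^1 \times \PP^1 \hookrightarrow \PP^3$ onto a smooth quadric. Since decomposable tensors are isotropic, the defining section $\sigma$ of $Q_V$ pulls back to zero along $s$; hence $s$ factors through the closed subscheme $Q_V \subset \PP V$, giving a closed immersion $\PP E_1 \times_C \PP E_2 \hookrightarrow Q_V$ over $C$. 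By the fiberwise analysis, this closed immersion is an isomorphism on every fiber.

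Finally I would promote this fiberwise isomorphism to a global one. Both $\PP E_1 \times_C \PP E_2$ and $Q_V$ are flat over $C$: the former is a product of projective bundles, while the latter is an effective Cartier divisor in the flat family $\PP V \to C$ containing no whole fiber, since $\sigma$ restricts to a nonzero quadric on each fiber. A closed immersion between two $C$-flat schemes that is an isomorphism on each fiber is an isomorphism, so the image of $s$ equals $Q_V$, as claimed. The only real obstacle here is this last, scheme-theoretic point: set-theoretically the equality is immediate from the cited computation, and the mild extra care is needed only to match the scheme structures of $Q_V$ (cut out by $\sigma$) and of the relative Veronese image. Equally, one may observe that $Q_V$ is reduced, its fibers being smooth and hence reduced quadrics, so that the equality of the two reduced closed subschemes follows at once from the set-theoretic equality already established.
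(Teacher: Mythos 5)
Your proof is correct and takes essentially the same approach as the paper: the paper's entire argument is the fiberwise computation with (\ref{rk4form}) identifying the isotropic vectors of $E_1 \otimes E_2$ with the decomposable tensors $e_1 \otimes e_2$, from which the identification of $Q_V$ with the image of the relative Veronese embedding is immediately concluded. The additional scheme-theoretic care you supply (the closed immersion, flatness over $C$, and reducedness of $Q_V$) is sound and fills in details the paper leaves implicit, but it is a refinement of the same argument rather than a different route.
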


In particular, $\PP V$ is an enveloping bundle for $\PP E_1 \times_C \PP E_2$. Moreover, we observe:

\begin{remark} \label{rk4decomp} \quad
\begin{enumerate}
\item[(a)] There are natural projections $\rho_1 \colon Q_V \to \PP E_1$ and $\rho_2 \colon Q_V \to \PP E_2$, which are morphisms over $C$. \label{Qprojs}  
\item[(b)] Via the identification $E_1 \otimes E_2 \isom \Hom ( E_1^* , E_2 )$, the locus $Q_V$ is the projectivization of the set of maps of rank one.
\item[(c)] The two-dimensional isotropic subspaces of a fiber $E_1 \otimes E_2|_p$ are exactly those of the form $\Lambda_1 \otimes E_2|_p$ and $E_1|_p \otimes \Lambda_2$, where $\Lambda_i$ is a line in $E_i|_p$. 
\end{enumerate}
\end{remark}
We now use the enveloping bundle structure to study isotropic subbundles of $V$.

\begin{proposition} \label{rk4IsotSubbs} Suppose $V = E_1 \otimes E_2$ as above. Let $F \subset V$ be a rank two isotropic subbundle. Then $F$ is either of the form $E_1 \otimes N_2$ for a line bundle $N_2 \subset E_2$, or $N_1 \otimes E_2$ for a line bundle $N_1 \subset E_1$. In each case, $N_i$ is uniquely determined as a subbundle of $E_i$. \end{proposition}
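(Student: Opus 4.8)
The plan is to reduce the global statement to the fiberwise classification already recorded in Remark \ref{rk4decomp}(c), and then to globalize by a connectedness argument. Fiberwise, that remark says the two-dimensional isotropic subspaces of $V|_p = E_1|_p \otimes E_2|_p$ are exactly those of the form $\Lambda_1 \otimes E_2|_p$ or $E_1|_p \otimes \Lambda_2$, with $\Lambda_i \subset E_i|_p$ a line; equivalently, these are the two rulings of $Q_V|_p \cong \PP^1 \times \PP^1$. I would first record two elementary facts about this picture: the two families never coincide, since $(\Lambda_1 \otimes E_2|_p) \cap (E_1|_p \otimes \Lambda_2) = \Lambda_1 \otimes \Lambda_2$ is one-dimensional, so two such subspaces can never be equal; and $\Lambda_1$ (resp.\ $\Lambda_2$) is recovered uniquely from a subspace of the first (resp.\ second) type.

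I would then package this into a statement about the relative isotropic Grassmannian $\OG (2, V) \to C$ of rank-two isotropic subspaces. The assignments $\Lambda_1 \mapsto \Lambda_1 \otimes E_2|_p$ and $\Lambda_2 \mapsto E_1|_p \otimes \Lambda_2$ define morphisms $\PP E_1 \to \OG (2, V)$ and $\PP E_2 \to \OG (2, V)$ over $C$; by the fiberwise classification and the disjointness just noted, together these give an isomorphism $\OG (2, V) \cong \PP E_1 \sqcup \PP E_2$ over $C$. The essential feature is that this splitting is \emph{global}: because the two rulings are parameterized by the genuinely given bundles $E_1$ and $E_2$, each of $\PP E_1$ and $\PP E_2$ is a connected closed subscheme meeting every fiber in exactly one ruling, so there is no monodromy that could interchange the two families and $\OG (2, V)$ has precisely two connected components.

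A rank two isotropic subbundle $F \subset V$ is the same datum as a section $s \colon C \to \OG (2, V)$, namely $p \mapsto F|_p$. Since $C$ is connected, $s$ factors through one of the two components $\PP E_1$ or $\PP E_2$. In the first case $s$ is a section of $\PP E_1 \to C$, hence corresponds to a line subbundle $N_1 \subset E_1$ with $F|_p = N_1|_p \otimes E_2|_p$ for every $p$; as $F$ and $N_1 \otimes E_2$ are both rank two subbundles of $V$ agreeing in every fiber, $F = N_1 \otimes E_2$. The second case yields $F = E_1 \otimes N_2$ symmetrically. Uniqueness of $N_i$ as a subbundle of $E_i$ follows from the fiberwise recovery of $\Lambda_i$ noted at the outset.

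The main obstacle is establishing the global decomposition $\OG (2, V) \cong \PP E_1 \sqcup \PP E_2$, or equivalently ruling out a subbundle $F$ whose fibers switch from one ruling to the other along $C$; this is exactly what the disjointness of the two families, combined with the connectedness of $C$, achieves. Everything else is the fiberwise linear algebra of Remark \ref{rk4decomp}(c) together with routine verifications that the parameterizing maps $\PP E_i \to \OG (2, V)$ are algebraic closed embeddings and that $F$ indeed yields an algebraic section.
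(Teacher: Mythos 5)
Your proof is correct, but it globalizes the fiberwise classification by a different mechanism than the paper does. The paper works directly with the surface $\PP F \subset Q_V = \PP E_1 \times_C \PP E_2$ and its two projections $\rho_i|_{\PP F} \colon \PP F \to \PP E_i$: since each fiber $\PP F|_p$ is a ruling line, exactly one projection is surjective, and the non-dominant projection contracts $\PP F$ to (the closure of) a section of the other factor, which is the desired line subbundle $N_i$. You instead pass to the parameter space: you identify the relative isotropic Grassmannian $\OG(2,V)$ with $\PP E_1 \sqcup \PP E_2$, view $F$ as a section of $\OG(2,V) \to C$, and invoke connectedness of $C$ to force the section into a single component. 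Both arguments rest on the same key input, Remark \ref{rk4decomp}(c), and both must rule out a fiber ``switching rulings''; the paper does this with a dominance/dimension count, you with disjointness of the two closed families. What the paper's route buys is economy: it needs no auxiliary moduli object and no verification that the maps $\PP E_i \to \OG(2,V)$ are closed embeddings identifying components (which, as you note, requires routine but nontrivial facts, e.g.\ smoothness of $\OG(2,V) \to C$ plus Zariski's main theorem). What your route buys is conceptual clarity and generality: the decomposition $\OG(2,V) \cong \PP E_1 \sqcup \PP E_2$ is exactly the rank-four instance of the two-component structure of orthogonal Grassmannian bundles noted in Remark \ref{rk4OG} and used in \cite{CCH3}, it makes the disjoint-union decomposition of Corollary \ref{rk4IQ} immediate (each Quot scheme corresponds to one component), and the same ``section of a fibration with two components'' argument is what the paper itself deploys in ranks five and six.
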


\begin{proof} The inclusion $F \hookrightarrow V$ induces a map $\PP F \hookrightarrow \PP V$ of ruled varieties over $C$. By isotropy, $\PP F$ is contained in $Q_V$, so we have the composed maps $\PP F \hookrightarrow Q_V \xrightarrow{\rho_i} \PP E_i$. Clearly at least one of these is dominant, and hence surjective, being a morphism of projective varieties. Moreover, since $F$ is isotropic, by Remark \ref{rk4decomp} (c), for all $p \in C$ the fiber $F|_p$ is of the form $\Lambda_1 \otimes E_2|_p$ or $E_1|_p \otimes \Lambda_2$ for some line $\Lambda_i \subset E_i|_p$. Thus $\PP F$ cannot project surjectively to both $\PP E_1$ and $\PP E_2$, so exactly one of the projections $\PP F \to \PP E_i$ is surjective.

After reordering if necessary, we may assume that $\PP F \cong \PP E_1$. As $\PP F$ is not dominant over $\PP E_2$, for general $p \in C$ the fiber $\PP F|_p \subset Q_V|_p$ is the line
\[ \PP E_1|_p \times \{ \mu ( p ) \} \ \subset \ \PP E_1|_p \times \PP E_2|_p \]
for some $\mu(p) \in \PP E_2|_p$. We thus obtain a section $\mu \colon C \to \PP E_2$, defining a line subbundle $N_2 \subset E_2$, and $F = E_1 \otimes N_2$. \end{proof}

\begin{remark} If $\PP E_1 \cong \PP E_2$ then $E_1 \otimes E_2$ is a twist of $\End\, E_1 = \Oc \oplus \End_0 \, E_1$, and in particular is strictly semistable. Thus if $V = E_1 \otimes E_2$ is a stable vector bundle, then $\PP E_1 \not\cong \PP E_2$. The situation where $\PP E_1 \cong \PP E_2$ will arise in the next section in the context of rank three orthogonal bundles. \end{remark}

\noindent In what follows, we follow the convention that a Quot scheme $\Quot_{r, d} (W)$ is empty if $d$ is not an integer.

\begin{corollary} \label{rk4IQ} Let $V = E_1 \otimes E_2$ be a rank four $L$-valued orthogonal bundle as above. Write $d_i := \deg (E_i)$. For each $e$, there is an isomorphism
\[ \Quoto_{1, \frac{1}{2}(e-d_2)} (E_1) \ \sqcup \ \Quoto_{1, \frac{1}{2}(e-d_1)} (E_2) \ \isom \ \IQo_{2, e} ( V ) , \]
via the map
\[ \begin{cases} N \mapsto N \otimes E_2 \hbox{ if } N \subset E_1 \\
 N \mapsto E_1 \otimes N \hbox{ if } N \subset E_2 . \end{cases} \]
\end{corollary}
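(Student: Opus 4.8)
The plan is to establish the stated isomorphism first on closed points, using Proposition \ref{rk4IsotSubbs}, and then to upgrade it to an isomorphism of schemes by exhibiting mutually inverse morphisms built from the projections $\rho_1, \rho_2$ of Remark \ref{rk4decomp}.

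First a degree count fixes the indexing: for a line subbundle $N \subset E_1$ one has $\deg(N \otimes E_2) = 2\deg(N) + d_2$, so $N \otimes E_2$ has degree $e$ exactly when $\deg(N) = \frac{1}{2}(e-d_2)$; symmetrically $E_1 \otimes N$ with $N \subset E_2$ has degree $e$ exactly when $\deg(N) = \frac{1}{2}(e-d_1)$. When these half-integers fail to be integers the corresponding Quot scheme is empty by our convention, and correspondingly no isotropic subbundle of that type exists. Granting the bookkeeping, Proposition \ref{rk4IsotSubbs} shows that every rank two isotropic subbundle $F \subset V$ of degree $e$ is of exactly one of the two forms $N \otimes E_2$ or $E_1 \otimes N$, with $N$ uniquely determined as a subbundle of $E_1$ or $E_2$ respectively. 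This already gives the bijection on closed points, and shows that the two images are disjoint, since (by the proof of Proposition \ref{rk4IsotSubbs}) the type of $F$ is detected by which of $\rho_1|_{\PP F}, \rho_2|_{\PP F}$ is surjective.

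To obtain an isomorphism of schemes, I would define the forward morphism by tensoring universal families: the universal line subsheaf on $C \times \Quoto_{1, \frac{1}{2}(e-d_2)}(E_1)$, tensored with the pullback of $E_2$, is a flat family of rank two isotropic subbundles of $V$ and hence induces a morphism to $\IQo_{2,e}(V)$ by the universal property of the Quot scheme; the $E_2$-factor is treated identically. For the inverse, starting from the universal isotropic subbundle $\mathcal{F}$ on $C \times \IQo_{2,e}(V)$, I would restrict to the locus where $F$ has the form $N \otimes E_2$ and project the relative projectivization $\PP\mathcal{F}$ (a subscheme of the pullback of $Q_V$) by the pullback of $\rho_1$; fiberwise this recovers the section $\PP N \subset \PP E_1$, hence a family of line subbundles of $E_1$ and a morphism to $\Quoto_{1,\frac{1}{2}(e-d_2)}(E_1)$, and symmetrically $\rho_2$ handles the other locus. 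That the two constructions are mutually inverse then reduces to the fiberwise identity $\rho_1(\PP(N \otimes E_2)) = \PP N$ and its analogue, which are immediate from Remark \ref{rk4decomp}.

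The main obstacle is precisely the passage from the pointwise bijection to a scheme isomorphism: one must check that the partition of $\IQo_{2,e}(V)$ by type is a decomposition into open and closed subschemes rather than a mere set-theoretic splitting, and that the projection construction via $\rho_i$ yields a \emph{saturated} line subbundle in families, so that the inverse lands in $\Quoto$ and not in a larger scheme of subsheaves. Both facts are transparent from the enveloping-bundle geometry but call for care with flatness and saturation in the relative setting over the base.
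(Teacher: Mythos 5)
Your proposal is correct and takes essentially the same route as the paper: the paper states this corollary without a separate proof, treating it as immediate from Proposition \ref{rk4IsotSubbs} together with exactly the degree bookkeeping you perform in your first paragraph (including the convention about non-integral indices and the disjointness of the two types). Your additional scheme-theoretic elaboration --- universal families for the forward map, and the open-closed decomposition of $\IQo_{2,e}(V)$ by type, which follows for instance from semicontinuity of the fiber dimensions of $\rho_i|_{\PP \mathcal{F}}$ --- goes beyond what the paper records and is sound in outline.
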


\begin{remark} Notice that $d_1 + d_2 = \ell$. Then a Riemann--Roch computation shows that $\Quoto_{1, \frac{1}{2} ( e - d_2 )} ( E_1 )$ and $\Quoto_{1, \frac{1}{2} ( e - d_1 )} ( E_2 )$ have expected dimension $\ell - e - (g-1)$. 
 This coincides with the expected dimension of $\IQo_{2, e} ( V )$ given in (\ref{Inle}), agreeing with Corollary \ref{rk4IQ}. \end{remark}

\begin{remark} \label{rk4OG} Note that the ruled surface $\PP E_1$ arises both as the projectivization of the isotropic subbundle $E_1$ and as a component of the isotropic Grassmannian bundle $\OG( V ) \subset \PP ( \wedge^2 V )$; that parameterizing isotropic planes of the form $(N \otimes E_2)|_p$ for $p \in C$. \end{remark}

As shown by Theorem \ref{w2deg}, if $L = \Oc$ then all rank two isotropic subbundles have degree of the same parity (which coincides with the invariant $w_2 (V)$, by Theorem \ref{w2deg}). The above results make this particularly visible for bundles of rank four:

\begin{corollary} \label{parity} Let $V = E_1 \otimes E_2$, where $\det E_1 \otimes \det E_2 \cong \Oc$. Then $E_1$ and $E_2$ have degrees of the same parity, which is also the parity of any rank two isotropic subbundle of $E_1 \otimes E_2$. 
\end{corollary}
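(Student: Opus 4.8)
The plan is to deduce this directly from degree arithmetic, using the structural description of isotropic subbundles already established. First I would observe that the hypothesis $\det E_1 \otimes \det E_2 \cong \Oc$ says precisely that $\deg(E_1) + \deg(E_2) = 0$, so $\deg(E_1) = -\deg(E_2)$ and in particular $\deg(E_1) \equiv \deg(E_2) \pmod 2$. This settles the first assertion, that $E_1$ and $E_2$ share a common parity; call it $\epsilon \in \bZ_2$.

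Next I would invoke Proposition \ref{rk4IsotSubbs}, according to which every rank two isotropic subbundle $F \subset E_1 \otimes E_2$ is of the form $N_1 \otimes E_2$ for a line subbundle $N_1 \subset E_1$, or $E_1 \otimes N_2$ for a line subbundle $N_2 \subset E_2$. In the first case the standard degree formula for tensor products gives $\deg(F) = 2\deg(N_1) + \deg(E_2) \equiv \deg(E_2) \pmod 2$; in the second, $\deg(F) = 2\deg(N_2) + \deg(E_1) \equiv \deg(E_1) \pmod 2$. Since $\deg(E_1)$ and $\deg(E_2)$ have the common parity $\epsilon$, in either case $\deg(F) \equiv \epsilon \pmod 2$, which is the remaining assertion.

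There is no real obstacle here: the substantive content is entirely carried by Proposition \ref{rk4IsotSubbs}, and once its classification of rank two isotropic subbundles is in hand, the corollary reduces to a one-line parity computation. The only point worth keeping in mind is that the even contribution $2\deg(N_i)$ is exactly what makes the parity of $\deg(F)$ independent of the choice of line subbundle $N_i$, so that this parity is genuinely an invariant of $V$ rather than of the particular subbundle chosen. This also dovetails with Theorem \ref{w2deg}, which identifies that common parity with $w_2(V)$ in the case $L = \Oc$.
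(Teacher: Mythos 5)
Your proof is correct and follows essentially the same route as the paper: both arguments reduce the statement to Proposition \ref{rk4IsotSubbs} (every rank two isotropic subbundle is $E_i \otimes N$ for a line subbundle $N$) and then conclude by the parity computation $\deg(E_i \otimes N) = \deg(E_i) + 2\deg(N) \equiv \deg(E_i) \bmod 2$. Nothing is missing.
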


\begin{proof} We have $\deg (E_1) \equiv \deg (E_2) \mod 2$ since $\det E_1 \otimes \det E_2 \cong \Oc$. By Proposition \ref{rk4IsotSubbs}, any rank two isotropic subbundle $F \subset V$ is isomorphic to $E_i \otimes N$ for some line subbundle $N$ and for some $i \in \{ 1, 2 \}$. Thus
\[ \deg ( F ) \ = \ \deg ( E_i \otimes N ) \ = \ \deg (E_i) + 2 \deg N \ \equiv \ \deg (E_i) \mod 2 . \qedhere \]
\end{proof}

We turn now to isotropic line subbundles. By Proposition \ref{rk4envelop}, an isotropic line subbundle $N$ of $V = E_1 \otimes E_2$ corresponds to a section of $Q_V = \PP E_1 \times_C \PP E_2$. Hence $N$ is of the form $N_1 \otimes N_2$ where $N_i$ is a line subbundle of $E_i$ for $i=1,2$. Therefore, we obtain at once:

\begin{theorem} \label{rk4rk1} Let $V = E_1 \otimes E_2$ be as above. Then $( N_1 , N_2 ) \mapsto N_1 \otimes N_2$ defines an isomorphism
\[
\bigcup_{d_1 + d_2 = d} \Quoto_{1,d_1} (E_1) \times \Quoto_{1,d_2} (E_2) \ \isom \ \IQo_{1,d} (E_1 \otimes E_2) .
\]
\end{theorem}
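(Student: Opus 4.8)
The plan is to deduce everything from the identification $Q_V = \PP E_1 \times_C \PP E_2$ of Proposition \ref{rk4envelop}, using the elementary fact that a section of a fiber product over $C$ is the same as a pair of sections. First I would recall that, as explained in {\S}\ref{envelop}, an isotropic line subbundle $N \subset V$ is precisely a section $C \to Q_V \subset \PP V$, with saturatedness of $N$ matching the requirement that this be a genuine section of the projective bundle. Under the isomorphism $Q_V \cong \PP E_1 \times_C \PP E_2$, such a section corresponds to a pair of sections $\sigma_i \colon C \to \PP E_i$, each of which is the same data as a saturated line subsheaf $N_i \subset E_i$, that is, a point of $\Quoto_{1, d_i}(E_i)$ with $d_i := \deg N_i$.

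Next I would trace through the embedding of Proposition \ref{rk4envelop} to identify the line subbundle of $V$ attached to the pair $(\sigma_1, \sigma_2)$. Since that map sends $(\Lambda_1, \Lambda_2) \in \PP E_1|_p \times \PP E_2|_p$ to $\Lambda_1 \otimes \Lambda_2 \subset (E_1 \otimes E_2)|_p$, the section $(\sigma_1, \sigma_2)$ defines exactly the line subbundle $N_1 \otimes N_2 \subset E_1 \otimes E_2$, recovering the map in the statement. A degree count gives $\deg(N_1 \otimes N_2) = d_1 + d_2$, so fixing the total degree $d$ forces the (disjoint) union over all decompositions $d_1 + d_2 = d$, matching the left-hand side.

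To promote this pointwise bijection to an isomorphism of schemes I would argue functorially. A $T$-valued point of $\IQo_{1,d}(V)$ is a line subbundle $\mathcal{N} \subset p_C^* V$ over $C \times T$ whose fiberwise image is isotropic, equivalently a section $C \times T \to Q_V \times T = (\PP E_1 \times_C \PP E_2) \times T$. By the universal property of the fiber product this is the same as a pair of sections into $\PP(p_C^* E_1)$ and $\PP(p_C^* E_2)$, hence a pair of relative line subbundles $\mathcal{N}_1, \mathcal{N}_2$, which is exactly a $T$-point of the left-hand side with $\mathcal{N} = \mathcal{N}_1 \otimes \mathcal{N}_2$. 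The two functors of points therefore agree, and the map is an isomorphism.

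The main obstacle is essentially this last step: ensuring the construction is functorial in families so that the inverse is also a morphism, rather than merely a set-theoretic bijection. Once one grants that $Q_V = \PP E_1 \times_C \PP E_2$ as schemes over $C$ and that isotropy of a line subbundle is equivalent to its associated section factoring through $Q_V$, the problem reduces to the universal property of the fiber product and presents no genuine difficulty — which is why the result can be stated as following ``at once.''
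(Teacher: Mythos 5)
Your proposal is correct and follows essentially the same route as the paper: the paper likewise invokes Proposition \ref{rk4envelop} to identify an isotropic line subbundle of $V$ with a section of $Q_V = \PP E_1 \times_C \PP E_2$, hence with a pair of line subbundles $N_i \subset E_i$, and concludes $N = N_1 \otimes N_2$ ``at once.'' Your functor-of-points argument in families simply makes explicit the scheme-theoretic isomorphism that the paper leaves implicit.
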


\subsection{Enumeration of maximal degree isotropic subbundles}

Firstly, we discuss the number of isotropic subbundles of rank two with maximal degree. In particular, we obtain another proof of Lemma \ref{zerodim} for $r = 4$.

\begin{theorem} \label{rk4enum} Let $V$ be a stable rank four orthogonal bundle which is general in its component of moduli.
\begin{enumerate}
\item[(a)] Suppose $V \in \MO(4, \Oc)_\st$. Then the number of isotropic subbundles of rank two of maximal degree is given as follows.
\[
\begin{array}{c|c|c} \hline
 & \MO^0 (4, \Oc) & \MO^1(4, \Oc) \\ \hline
 g \ \text{even} & \infty \ \text{of degree } -g & 2 \cdot 2^g  \ \text{of degree } 1-g \\ \hline
 g \ \text{odd} & 2 \cdot 2^g \ \text{of degree } 1-g  & \infty \ \text{of degree } -g \\ \hline
\end{array}
\]
\item[(b)] Suppose $V \in \MO(4, \Oc(x))_\st$. Then $V$ admits $2^g$ isotropic subbundles of rank two and degree $2-g$.

\item[(c)] Suppose $g \ge 3$ is odd. Then $V$ has $4^g$ maximal isotropic line subbundles of degree $1-g$.

\end{enumerate} \end{theorem}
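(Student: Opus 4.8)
The plan is to reduce everything to counting maximal line subbundles of general stable rank two bundles, to which Theorem \ref{MaxLineSubbs} applies. By Theorem \ref{rk4moduli}, each of the morphisms $\Theta_4^0$, $\Theta_4^1$ and $\Phi_4$ is finite and surjective between irreducible varieties of equal dimension; since a finite morphism sends the non-general locus to a proper closed subset, a general $V$ in any given component is of the form $E_1 \otimes E_2$ with $(E_1, E_2)$ general in the corresponding product of moduli spaces, so that $E_1$ and $E_2$ are individually general stable bundles. I record the relevant degrees $d_i := \deg E_i$: in $\MO^0(4,\Oc)$ we have $(d_1, d_2) = (0,0)$; in $\MO^1(4,\Oc)$ we have $(1,-1)$; and in $\MO(4,\Oc(x))$ we have $(0,1)$.

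For parts (a) and (b) I would invoke Corollary \ref{rk4IQ}, which identifies $\IQo_{2,e}(V)$ with $\Quoto_{1, \frac{1}{2}(e-d_2)}(E_1) \sqcup \Quoto_{1, \frac{1}{2}(e-d_1)}(E_2)$. The key arithmetic point is that $E_1$ contributes only when $e \equiv d_2 \pmod 2$ and $E_2$ only when $e \equiv d_1 \pmod 2$, consistent with Theorem \ref{w2deg}. For a general stable rank two bundle $E$ of degree $d$, the maximal line subbundles behave dichotomously: if $d-g$ is odd then $d \equiv 1-g \pmod 2$ and Theorem \ref{MaxLineSubbs} gives exactly $2^g$ of degree $\frac{1}{2}(d-g+1)$, whereas if $d-g$ is even the top Quot scheme has positive expected dimension and hence an infinite family of maximal line subbundles of degree $\frac{1}{2}(d-g)$. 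Translating a line subbundle $N \subset E_i$ of degree $k$ into the isotropic subbundle degree $e = 2k + d_j$ with $j \neq i$, I would run this dichotomy through each component. The parity of the maximal achievable $e$ singles out which component is finite ($\MO^0$ when $g$ is odd, $\MO^1$ when $g$ is even); in the finite cases the two summands reach the same maximal $e = 1-g$, so their counts add to $2\cdot 2^g$ in part (a), while in part (b) the finite family (of degree $2-g$) strictly dominates the infinite family (of degree $1-g$), giving $2^g$. The maximality of $e$ in each case follows either directly or from the vanishing of the expected dimension $\ell - e - (g-1)$ of $\IQo_{2,e}(V)$.

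For part (c) I would instead use Theorem \ref{rk4rk1}, which writes each isotropic line subbundle of $E_1 \otimes E_2$ uniquely as $N_1 \otimes N_2$ with $N_i \subset E_i$, giving $\IQo_{1,d}(V) \cong \bigcup_{d_1 + d_2 = d} \Quoto_{1,d_1}(E_1) \times \Quoto_{1,d_2}(E_2)$, the isotropic subbundle having degree $d_1 + d_2$. Thus the maximal total degree is the sum of the individual maxima. When $g$ is odd and $V \in \MO^0(4,\Oc)$, each $E_i$ is a general stable bundle of degree $0$ with $0-g$ odd, so by Theorem \ref{MaxLineSubbs} it has exactly $2^g$ maximal line subbundles of degree $\frac{1-g}{2}$; at the top total degree $d = 1-g$ the only contributing pair has $d_1 = d_2 = \frac{1-g}{2}$, since larger $d_i$ gives an empty Quot scheme. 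Hence the product yields $2^g \cdot 2^g = 4^g$ maximal isotropic line subbundles of degree $1-g$. This also explains the hypotheses: in even genus, or in the other components for $g$ odd, one factor carries a positive-dimensional family of maximal line subbundles and the count becomes infinite.

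The main obstacle is the bookkeeping in parts (a) and (b): one must track, component by component and according to the parity of $g$, (i) which of the two Quot summands of Corollary \ref{rk4IQ} is nonempty for a given $e$; (ii) whether the relevant top Quot scheme is zero- or positive-dimensional, which is precisely the split $d-g$ odd versus even; and (iii) the affine change $e = 2k + d_j$ relating the degree $k$ of a line subbundle of $E_i$ to the degree $e$ of the isotropic subbundle. The conceptual ingredients (Corollary \ref{rk4IQ}, Theorem \ref{rk4rk1} and Theorem \ref{MaxLineSubbs}) are already in hand; the work lies in verifying that the maximal degrees and counts assemble into the stated table, and in confirming that generality is preserved under the finite morphisms $\Theta_4^0$, $\Theta_4^1$ and $\Phi_4$.
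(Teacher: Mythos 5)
Your proposal is correct and follows essentially the same route as the paper's own proof: reduce via the finite surjective morphisms of Theorem \ref{rk4moduli} to $V = E_1 \otimes E_2$ with $E_1, E_2$ general, then apply Corollary \ref{rk4IQ} together with Theorem \ref{MaxLineSubbs} for parts (a) and (b), and Theorem \ref{rk4rk1} together with Theorem \ref{MaxLineSubbs} for part (c). The paper leaves the degree/parity bookkeeping as ``a computation,'' which you have carried out explicitly and correctly, including the transfer of generality through the finite morphisms and the parity dichotomy that produces the infinite versus finite entries in the table.
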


\begin{proof} (a) A computation using Corollary \ref{rk4IQ} shows that for each value of $w_2 (V)$, the scheme of maximal rank two isotropic subbundles of $V = E_1 \otimes E_2$ is canonically identified with the union of the Quot schemes of maximal line subbundles of $E_1$ and $E_2$. By Theorem \ref{rk4moduli}, we may assume that $E_1$ and $E_2$ are general in moduli. The statement now follows from Theorem \ref{MaxLineSubbs}. The proof of (b) is similar. 

As for (c): It follows from Theorem \ref{rk4rk1} that the degree of an isotropic line subbundle $N_1 \otimes N_2 \subset E_1 \otimes E_2$ is maximal among the isotropic line subbundles if and only if $N_i$ is a maximal line subbundle of $E_i$ for $i = 1 , 2$. By Theorem \ref{MaxLineSubbs}, if $g$ is odd, then a general $E_i \in \SU (2, \cO_C)$ has $2^g$ maximal line subbundles of degree $-\frac{1}{2} (g-1)$. Statement (c) follows.
\end{proof}

\section{Rank three}

Suppose $V$ is an $L$-valued orthogonal bundle of rank three. By (\ref{DetOddRank}), the line bundle $L$ has even degree and $\det V \cong M^3$ for some $M$ satisfying $M^2 \cong L$.  We now analyse the structure of orthogonal bundles of rank three, expanding upon \cite[p.\ 185]{Mum} and \cite[Lemma 4.1]{CH15}.

\subsection{Rank three orthogonal bundles}

As noted in the case $L = \Oc$ in \cite[{\S} 2]{Mum}, for any rank two bundle $E$, the bundle $M \otimes (\det E)^{-1} \otimes \Sym^2 E$ has an $L$-valued quadratic form induced by
\begin{equation} ( e_1 \cdot e_2 ) \otimes ( f_1 \cdot f_2 ) \ \mapsto \ 
2 \left( e_1 \wedge f_1 \otimes e_2 \wedge f_2 + e_2 \wedge f_1 \otimes e_1 \wedge f_2 \right) . \label{rk3form} \end{equation}
We shall see that this induces finite surjective morphisms of moduli spaces. Noting that any rank three orthogonal bundle is a twist of an $\Oc$-valued orthogonal bundle of determinant $\Oc$, we now set $L = M = \Oc$ and describe the moduli space $\MO (3, \Oc)$ in more detail.

\begin{theorem} \quad \label{rk3moduli}
\begin{enumerate}
\item[(a)] The association $E \mapsto \Sym^2 E$ defines a finite surjective morphism
\[ \Theta_3^0 \colon \SU ( 2, \Oc ) \ \to \ \MO^0 ( 3, \Oc ) . \]
\item[(b)] For any $x \in C$, the association $E \mapsto \Sym^2 E \otimes \Oc (-x)$ defines a  finite surjective morphism
\[ \Theta_3^1 \colon \SU ( 2, \Oc(x) ) \ \to \ \MO^1 ( 3, \Oc ) . \]
\end{enumerate}
\end{theorem}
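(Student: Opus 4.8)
My plan is to mirror the proof of Theorem \ref{rk4moduli} closely, since with $\det E$ (respectively $\det E\otimes\Oc(-x)$) playing the role of the second tensor factor there are only two genuinely new points: the verification of the determinant, semistability and second Stiefel--Whitney class of $\Sym^2 E$, and a positivity argument for finiteness adapted to a single factor. First, for $E$ of rank two one computes $\det(\Sym^2 E)\cong(\det E)^3$, so $\Sym^2 E\in\MO(3,\Oc)$ when $E\in\SU(2,\Oc)$, while $\Sym^2 E\otimes\Oc(-x)$ has trivial determinant when $E\in\SU(2,\Oc(x))$. Semistability of these bundles follows from that of $E$ because $\C$ has characteristic zero. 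The association descends to moduli because $\Sym^2(E\otimes P)\otimes(\det(E\otimes P))^{-1}\cong\Sym^2 E\otimes(\det E)^{-1}$ for every line bundle $P$; this lets me construct $\Theta_3^0$ on $\SU(2,\Oc)_\st$ from a Poincar\'e bundle on a suitable \'etale cover exactly as for $\Theta_4^0$, whereas $\Theta_3^1$ is defined directly since $\SU(2,\Oc(x))$ is a fine moduli space.

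Next I would identify the target component. For a line subbundle $N\subset E$, the form (\ref{rk3form}) vanishes on $\Sym^2 N=N^2$ because $n\wedge n=0$, so $N^2\subset\Sym^2 E$ is an isotropic line subbundle of degree $2\deg(N)$; in the twisted case $N^2\otimes\Oc(-x)$ is isotropic of degree $2\deg(N)-1$. By Theorem \ref{w2deg} applied with $n=1$, the class $w_2$ is the parity of this degree, hence $0$ for $\Theta_3^0$ and $1$ for $\Theta_3^1$, placing the images in $\MO^0(3,\Oc)$ and $\MO^1(3,\Oc)$ respectively. For part (a) I would then extend $\Theta_3^0$ from the stable locus to all of $\SU(2,\Oc)$ as in Theorem \ref{rk4moduli}: the association preserves S-equivalence and is bounded near each point, so it extends to a morphism by the Riemann extension theorem; part (b) needs no such step. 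Finiteness follows from $\Pic(\SU(2,\Oc))$ having rank one \cite{DN}: pulling back a very ample $\cN$ gives $(\Theta_3^0)^*\cN\cong\cL^a$ for the ample generator $\cL$, and since $\Theta_3^0$ is non-constant we have $a>0$, so $(\Theta_3^0)^*\cN$ is ample and $\Theta_3^0$ is finite; likewise for $\Theta_3^1$.

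Finally, surjectivity, which I expect to be the main obstacle. The dimension count preceding the theorems gives $\dim\SU(2,\Oc)=3(g-1)$, equal to the dimension of every component of $\MO(3,\Oc)$, so the closed irreducible image of the finite morphism is a full component of $\MO^i(3,\Oc)$; the delicate point is to see that it exhausts $\MO^i(3,\Oc)$. I would obtain this from a rank three structure theorem proved, as the introduction indicates, from the rank four case: given a semistable orthogonal $V$ with the stated invariants, the orthogonal sum $V\perp\Oc$ is rank four, $\Oc$-valued, of trivial determinant and the same $w_2$, hence by Proposition \ref{DetIrns} and Theorem \ref{rk4structure} is isomorphic to $E_1\otimes E_2$. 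Non-degeneracy of the canonical line $\Oc\subset V\perp\Oc$ forces $E_2\cong E_1^*$, so $V\perp\Oc\cong\End(E_1)=\Oc\oplus\End_0(E_1)$ and $V\cong\End_0(E_1)\cong\Sym^2 E_1\otimes(\det E_1)^{-1}$. Since $w_2(V)\equiv\deg(E_1)\bmod 2$, in the case $w_2=0$ one has $\deg E_1$ even, so $\det E_1$ admits a square root $\Delta$ and $V\cong\Sym^2(E_1\otimes\Delta^{-1})$ with $E_1\otimes\Delta^{-1}\in\SU(2,\Oc)$; the case $w_2=1$ is analogous after a twist by $\Oc(-x)$. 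This structural extraction, rather than the moduli-theoretic formalities, is where the real work lies, and it simultaneously yields the irreducibility of each component $\MO^i(3,\Oc)$.
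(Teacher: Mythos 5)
Your proposal is correct, and its first three steps (construction of the morphism via a Poincar\'e bundle on an \'etale cover and the Riemann extension theorem, identification of the target component via the isotropic line subbundles $N^2$ and $N^2(-x)$ together with Theorem \ref{w2deg}, and finiteness via the Picard number one argument of \cite{DN}) coincide with the paper's proof. Where you genuinely diverge is surjectivity: the paper disposes of this in one line by dimension counting --- the image of the finite morphism is closed, irreducible and of dimension $3(g-1)$, which equals the dimension of the irreducible component $\MO^i(3,\Oc)$, so it is everything --- whereas you prove it pointwise by a structure theorem: for any semistable $V$, the sum $V\perp\Oc$ is rank four orthogonal with trivial determinant, hence (by Proposition \ref{DetIrns} and Theorem \ref{rk4structure}) of the form $E_1\otimes E_2$; nondegeneracy of the summand $\Oc$ forces $E_2\cong E_1^*$, so $V\cong \End_0(E_1)\cong \Sym^2 E_1\otimes(\det E_1)^{-1}$, and the parity of $\deg E_1$ lets you normalize by a square root twist. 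This is precisely the paper's Theorem \ref{rk3structure} (stated and proved immediately \emph{after} Theorem \ref{rk3moduli}, with general $M$), and since its proof uses only the rank two and rank four results, your argument is not circular. What each approach buys: the paper's dimension count is shorter, but it silently relies on $\MO^i(3,\Oc)$ being a single irreducible variety (this is built into the paper's definition of $\MO^i$ as ``the component with $w_2=i$''); your route is heavier but proves surjectivity onto the whole locus $\{w_2(V)=i\}$ with no such presupposition, and, as you note, yields irreducibility of each component as a corollary rather than an input. Two small points you should make explicit to complete your argument: (i) the bundle $E$ you extract is semistable --- this follows because a destabilizing $N\subset E$ would give a destabilizing $N^2\otimes(\det E)^{-1}\subset V$, so $[E]$ really is a point of $\SU(2,\Oc)$ resp.\ $\SU(2,\Oc(x))$; and (ii) your identification $V\cong\Sym^2 E\otimes(\det E)^{-1}$ is a priori only an isomorphism of vector bundles, so to conclude equality of points in the moduli space of \emph{orthogonal} bundles one should, as the paper does in Theorem \ref{rk3structure}, invoke \cite[Proposition 3.1]{Gro} to upgrade it to an orthogonal isomorphism.
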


\begin{proof} Let us firstly define the morphism $\Theta_3^0$. As $\C$ has characteristic zero, $\Sym^2 E$ is semistable if $E$ is. Therefore, passing to a suitable \'etale cover, we can define $\Theta_3^0$ on the open subset $\SU ( 2, \Oc )_\st$ using a Poincar\'e bundle. Note that the bundle $\Sym^2 E$ and $\Sym^2 E  \otimes \Oc (-x)$ have  isotropic line subbundles of the form $N^2$ and $N^2(-x)$ respectively. Hence by Theorem \ref{w2deg}, the images of $\Theta_3^0$ and $\Theta_3^1$ land on the components $\MO^0 ( 3, \Oc )$ and $\MO^1 ( 3, \Oc )$, respectively.

Now, using for example \cite[Exercise II.5.16]{Har}, it is straightforward to check that if $E \sim E'$, then $\Sym^2 E \sim \Sym^2 E'$. Hence, as any semistable bundle is a limit of stable ones, it follows that $\Theta^3_0$ has a continuous extension to $\SU ( 2, \Oc )$. As in the proof of Theorem \ref{rk4moduli}, we can show that $\Theta_3^0$ is a morphism by using the Riemann extension theorem.

As already noted, the moduli space $\SU ( 2 , \Oc )$ has Picard number 1. Let $\cN$ be any very ample line bundle on $\MO^0 ( 3, \Oc )$. Then $\left( \Theta_3^0 \right)^* \cN \cong \cL^a$ for some positive integer $a$, where $\cL$ is as before the ample generator of $\Pic \left( \SU ( 2, \Oc ) \right)$.  Therefore, as $\Theta_3^0$ is a morphism, it must be finite. Counting dimensions of the projective varieties $\SU ( 2, \Oc ) $ and $ \MO^0 ( 3, \Oc )$,  it must be surjective. This proves (a). The proof of (b) is similar. 
\end{proof}

As in the rank four case, the following theorem ensures that an arbitrary (possibly non-stable) $L$-valued orthogonal bundle of rank three has a similar expression, and its proof shows how to find it.

\begin{theorem} \label{rk3structure} Let $V$ be a rank three $L$-valued orthogonal bundle of determinant $M^3$ as above. Then $V$ is isomorphic as an orthogonal bundle to $\Sym^2 E \otimes ( \det E)^{-1} \otimes M$ for some rank two bundle $E$. \end{theorem}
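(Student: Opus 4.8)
The plan is to exploit the fact that for $r = 3$ the quadric fibration $Q_V \subset \PP V$ is a fibration in smooth conics, together with the classical fact that a smooth conic in $\PP^2$ is exactly the Veronese image of a $\PP^1$. Since $V$ carries a nondegenerate $L$-valued form, the restriction of $\sigma$ to each fibre is a nondegenerate ternary quadratic form, so $Q_V|_p \subset \PP V|_p \cong \PP^2$ is a smooth conic for every $p$. The first step is to show that $Q_V \to C$ is a projective bundle $\PP E$ for some rank two bundle $E$: the generic fibre is a smooth conic over the function field $\C(C)$, which by Tsen's theorem (as $\C(C)$ is $C_1$) carries a rational point and is therefore isomorphic to $\PP^1_{\C(C)}$; since $\mathrm{Br}(C) = 0$ for a smooth curve over $\C$, the resulting $\PP^1$-fibration is of the form $\PP E$. (A section can alternatively be produced as in Proposition \ref{IsotLineSubb}.)

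Next I would identify $\PP V$ with the relative Veronese target of $\PP E$. The inclusion $\iota \colon Q_V = \PP E \hookrightarrow \PP V$ restricts on each fibre to a Veronese embedding $\PP^1 \hookrightarrow \PP^2$, and a smooth conic spans its ambient $\PP^2$. Hence $\iota^* \cO_{\PP V}(1)$ restricts to $\cO_{\PP E}(2)$ on fibres, and pushing forward along $\PP E \to C$ (the fibrewise restriction map on sections being an isomorphism of three-dimensional spaces) identifies $V^*$, up to a twist by a line bundle on $C$, with $\Sym^2 E^*$. Equivalently $\PP V \cong \PP (\Sym^2 E)$ over $C$, so that $V \cong \Sym^2 E \otimes A$ for some line bundle $A \in \Pic (C)$. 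This reduces the theorem to pinning down $A$.

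The remaining, and main, point is to determine $A$ exactly, so that the isomorphism respects the orthogonal structure and not merely the underlying bundles. Write $B := (\det E) \otimes A$. Comparing determinants, $\det (\Sym^2 E \otimes A) = (\det E)^3 \otimes A^3 = B^3$ must equal $\det V \cong M^3$, giving $B^3 \cong M^3$. On the other hand, under $V \cong \Sym^2 E \otimes A$ the form $\sigma$ becomes a section of $\Sym^2 V^* \otimes L \cong \Sym^2 (\Sym^2 E^*) \otimes A^{-2} \otimes L$ whose fibrewise zero locus is $Q_V$; but this conic is the discriminant (rank $\le 1$) locus, cut out by the canonical form of (\ref{rk3form}), a section of $\Sym^2 (\Sym^2 E^*) \otimes (\det E)^2$. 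Since both sections define the same nondegenerate conic in every fibre, they differ by a nowhere-vanishing section of $A^{-2} \otimes L \otimes (\det E)^{-2}$, whose existence forces $A^{-2} \otimes L \otimes (\det E)^{-2} \cong \Oc$, i.e.\ $B^2 \cong L \cong M^2$. Combining $B^3 \cong M^3$ with $B^2 \cong M^2$ yields $B \cong B^3 \otimes (B^2)^{-1} \cong M$, whence $A \cong M \otimes (\det E)^{-1}$ and $V \cong \Sym^2 E \otimes (\det E)^{-1} \otimes M$; since the two forms agree up to the nowhere-vanishing scalar, this is an isomorphism of orthogonal bundles.

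I expect the last matching step to be the real obstacle: producing $Q_V \cong \PP E$ is geometrically transparent, but verifying that the twist $A$ equals $M \otimes (\det E)^{-1}$ precisely—so that the symmetric forms, not just the bundles, correspond—is where care is needed, and the simultaneous $2$- and $3$-torsion constraints on $B$ are exactly what pin it down. Conceptually this realises the rank three case as the diagonal $\PP E \hookrightarrow \PP E \times_C \PP E$ inside the rank four picture of Proposition \ref{rk4envelop}, i.e.\ the degenerate case $\PP E_1 \cong \PP E_2$, under which $E \otimes E \cong \Sym^2 E \perp \wedge^2 E$ splits off the line bundle $\wedge^2 E = \det E$.
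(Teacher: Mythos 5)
Your argument is correct, but it takes a genuinely different route from the paper's. The paper reduces to rank four: since $M^2 \cong L$, the orthogonal direct sum $V \perp M$ is a rank four $L$-valued orthogonal bundle of determinant $L^2$; Proposition \ref{DetIrns} provides a rank two isotropic subbundle, Theorem \ref{rk4structure} gives $V \perp M \cong E_1 \otimes E_2$, and the nowhere-isotropic summand $M$, viewed inside $\Hom (E_1^* , E_2)$, is fibrewise of full rank and hence induces $E_2 \cong E_1^* \otimes M$; thus $V \perp M \cong \left( \Sym^2 E_1 \otimes (\det E_1)^{-1} \otimes M \right) \oplus M$, and the paper concludes by uniqueness of direct sum decomposition together with Grothendieck's result that the vector bundle isomorphism can be corrected to an orthogonal one. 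You instead work intrinsically with the conic fibration: $Q_V \cong \PP E$ via Tsen/$\mathrm{Br}(C) = 0$ (or a section as in Proposition \ref{IsotLineSubb}), then $V \cong \Sym^2 E \otimes A$ by pushing forward $\cO_{\PP V}(1)$, and finally $A$ is pinned down by the two constraints $B^3 \cong M^3$ (determinants) and $B^2 \cong L$ (proportionality of $\sigma$ with the canonical form), where $B = \det E \otimes A$. This inverts the paper's logical order: there, Proposition \ref{rk3envelop} (that $Q_V$ is the relative Veronese of $\PP E$) is deduced \emph{from} the structure theorem, whereas you derive the structure theorem \emph{from} the fibration geometry. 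Your route avoids the rank-four detour, Krull--Schmidt, and the appeal to Grothendieck --- your rescaling argument suffices, since a nowhere-vanishing regular function on the projective curve $C$ is a nonzero constant and so admits a square root --- at the cost of Brauer-theoretic input and pushforward bookkeeping. One step you should make explicit: under your identification $\PP V \isom \PP ( \Sym^2 E )$, the inclusion $Q_V \hookrightarrow \PP V$ becomes precisely the relative Veronese embedding (not merely some conic subfibration), because that identification is induced by the very surjection $\pi^* V^* \to \cO_{\PP E}(2) \otimes \pi^* A'$ defining $\iota$; this is what licenses equating $Q_V$ with the rank $\le 1$ locus cut out by the canonical form (\ref{rk3form}) and hence the proportionality of the two forms.
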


\begin{proof} We use an idea from \cite{CH15}, also applied in \cite{CCH3}. The isomorphism $M^2 \isom L$ gives $M$ the structure of an $L$-valued orthogonal bundle. The orthogonal direct sum $V \perp M$ is then $L$-valued orthogonal of rank four and determinant $M^4 = L^2$. By Proposition \ref{DetIrns}, we can find a rank two isotropic subbundle in $V \perp M$. By Theorem \ref{rk4structure}, there exist rank two bundles $E_1$, $E_2$ such that $V \perp M \cong E_1 \otimes E_2$.

Now the subbundle $M \subset E_1 \otimes E_2$ is nowhere isotropic, being an orthogonal direct summand. But as noted in Remark \ref{rk4decomp} (b), the locus of isotropic vectors corresponds precisely to the locus of maps $E_1^* \to E_2$ of rank at most one. Therefore, the image of $M \to E_1 \otimes E_2$ is indecomposable at all points. Hence it defines an isomorphism $E_1^* \otimes M \isom E_2$. Moreover, $E_1^* \otimes M \cong E_1 \otimes (\det E_1)^{-1} \otimes M$ since $E_1$ has rank two. It follows that
\[ V \perp M \ \cong \ \left( \Sym^2 E_1 \otimes ( \det E_1 )^{-1} \otimes M \right) \oplus M . \]
By uniqueness of direct sum decomposition, we conclude that $V \cong \Sym^2 E_1 \otimes ( \det E_1 )^{-1}  \otimes M$. By \cite[Proposition 3.1]{Gro}, composing with an automorphism of $V$ if necessary, this may be assumed to be an isomorphism of orthogonal bundles. \end{proof}

\subsection{Quadric fibration and isotropic subbundles}

We consider an $L$-valued rank three orthogonal bundle $V = \Sym^2 E \otimes ( \det E )^{-1} \otimes M$ of determinant $M^3$ as above. We now describe the locus of isotropic lines $Q_V \subset \PP V$.

\begin{proposition} \label{rk3envelop} The fibration $Q_V$ coincides with the relative Segre embedding
\[ \PP E \ \hookrightarrow \ \PP \Sym^2 E \ \isom \ \PP ( \Sym^2 E \otimes ( \det E )^{-1} \otimes M ) . \]
\end{proposition}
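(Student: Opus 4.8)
The plan is to compute the quadratic form attached to (\ref{rk3form}) directly and identify its zero locus fiberwise, mirroring the rank four case (Proposition \ref{rk4envelop}). First I would note that twisting by the line bundle $(\det E)^{-1} \otimes M$ induces a canonical identification $\PP V = \PP (\Sym^2 E \otimes (\det E)^{-1} \otimes M) = \PP \Sym^2 E$, under which the orthogonal form on $V$ corresponds to the $(\det E)^{2}$-valued form (\ref{rk3form}) on $\Sym^2 E$. Thus $Q_V \subset \PP \Sym^2 E$ is exactly the vanishing locus of the associated quadratic form $q(s) = \sigma(s \otimes s)$, and it suffices to determine which $s \in \Sym^2 E$ are isotropic.

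The key step is to evaluate $q$ on a product $e_1 \cdot e_2$. Setting $f_i = e_i$ in (\ref{rk3form}) and using $e_i \wedge e_i = 0$, the first summand drops out and one is left with
\[ q(e_1 \cdot e_2) \ = \ 2\,(e_2 \wedge e_1) \otimes (e_1 \wedge e_2) \ = \ -2\,(e_1 \wedge e_2)^{\otimes 2} \ \in \ (\det E)^{2} . \]
Hence $e_1 \cdot e_2$ is isotropic if and only if $e_1 \wedge e_2 = 0$, that is, if and only if $e_1$ and $e_2$ are linearly dependent.

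Since $\C$ is algebraically closed, over each point $p$ every nonzero element of $\Sym^2 E|_p$ factors as a product $e_1 \cdot e_2$ of two linear forms. By the computation above, such an element is isotropic precisely when $e_1$ and $e_2$ are proportional, i.e.\ precisely when it is a scalar multiple of a square $e \cdot e$. Therefore the fiber $Q_V|_p$ is exactly the locus of squares $\{ [e \cdot e] : [e] \in \PP E|_p \}$, which is the image of the fiberwise squaring map $[e] \mapsto [e \cdot e]$. As this construction is manifestly compatible with the projection to $C$, it globalizes to the relative embedding $\PP E \hookrightarrow \PP \Sym^2 E$ named in the statement, giving $Q_V = \PP E$ as claimed. (Concretely, in a local frame $\{u,v\}$ of $E$ one finds $q(a\,u^2 + b\,uv + c\,v^2) = -2(b^2 - 4ac)$, so that the isotropic condition is the vanishing of the discriminant, i.e.\ the perfect-square condition.)

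The only delicate point is the bookkeeping of symmetric-product conventions and the constant $-2$; the conceptual content is the identity $q(e_1 \cdot e_2) = -2(e_1 \wedge e_2)^{\otimes 2}$ together with the fact that over $\C$ every binary quadratic splits into linear factors, which jointly force the isotropic locus to coincide with the Veronese image. The main obstacle, if any, is simply to confirm that the embedding called the ``relative Segre embedding'' in the statement is indeed this squaring map. One can alternatively bypass the direct computation by restricting the rank four description of $Q_{V \perp M}$ (Proposition \ref{rk4envelop}) to $\PP V \subset \PP(V \perp M)$ under the identification $V \perp M \cong E \otimes (E^* \otimes M)$ furnished by Theorem \ref{rk3structure}, whereupon the isotropic lines of $V$ appear as the rank one traceless locus, again a conic isomorphic to $\PP E$.
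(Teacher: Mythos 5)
Your proposal is correct, and it is essentially a fleshed-out version of the first sentence of the paper's proof, which simply asserts that the proposition ``follows easily from (\ref{rk3form})'' without carrying out the computation. Your key identity $q(e_1 \cdot e_2) = -2(e_1 \wedge e_2)^{\otimes 2}$ is right (as is the discriminant formula $q(au^2 + buv + cv^2) = -2(b^2 - 4ac)(u \wedge v)^{\otimes 2}$ in a local frame), and the fiberwise argument is completed correctly by the fact that binary quadratics split into linear factors over $\C$, so that the isotropic locus is exactly the locus of squares, i.e.\ the image of $[e] \mapsto [e \cdot e]$. The paper's written-out argument is instead the geometric one you sketch in your final sentences: by Theorem \ref{rk3structure} the form on $V$ is induced from the rank four bundle $V \perp M \cong E \otimes E \otimes (\det E)^{-1} \otimes M$, so $Q_V = Q_{V \perp M} \cap \PP V$, and by Proposition \ref{rk4envelop} this intersection is $\left( \PP E \times_C \PP E \right) \cap \PP ( \Sym^2 E \otimes (\det E)^{-1} \otimes M )$, namely the diagonal, which is the stated image of $\PP E$. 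The trade-off: your computation is self-contained and makes the conic structure of the fibers completely explicit, while the paper's route avoids coordinates entirely and fits its general strategy of deriving the odd-rank cases from the even-rank ones. One cosmetic point you need not worry about: the map $\PP E \hookrightarrow \PP \Sym^2 E$ you construct (the squaring/Veronese map) is indeed what the statement calls the ``relative Segre embedding''; the paper's naming of the Segre and Veronese maps in Propositions \ref{rk4envelop} and \ref{rk3envelop} is idiosyncratic, but the underlying maps are exactly the ones you describe.
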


\begin{proof} This follows easily from (\ref{rk3form}). 
 For a geometric argument: By Theorem \ref{rk3structure}, we may assume that the quadratic form is induced from that $V \perp M \cong E \otimes E \otimes (\det E)^{-1} \otimes M$. Thus $Q_V = Q_{V \perp M} \cap \PP V$. By Proposition \ref{rk4envelop}, this is
\[ \left( \PP E \times_C \PP E \right) \cap \left( \PP ( \Sym^2 E \otimes (\det E)^{-1} \otimes M ) \right) , \]
which is precisely the image of the relative Segre embedding, as desired. \end{proof}

\begin{theorem}[Isotropic Quot schemes in rank three] \label{rk3IQ} Let $V = \Sym^2 E \otimes (\det E)^{-1} \otimes M$ be as above. Write $d := \deg(E)$ and $m := \deg (M)$.
\begin{enumerate}
\item[(a)] For each integer $e$, there is an isomorphism
\[ \Quoto_{1, \frac{1}{2}(e+d-m)} ( E ) \ \isom \ \IQo_{1, e} ( V ) \]
given by $N_1 \mapsto   N_1^2 \otimes (\det E)^{-1} \otimes M$.
\item[(b)] In particular, if $N \subset V$ is any isotropic line subbundle, then $\deg (N) \equiv d - m \mod 2$.
\end{enumerate}
\end{theorem}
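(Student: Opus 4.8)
The key input is Proposition \ref{rk3envelop}, which identifies $Q_V$ with the image of the relative Veronese embedding $v \colon \PP E \hookrightarrow \PP(\Sym^2 E) \isom \PP V$; thus $v$ is an isomorphism of $C$-schemes onto $Q_V \cong \OG(1, V)$. The plan is to exploit this directly. A saturated isotropic line subbundle of $V$ is exactly a section $C \to Q_V$, while a line subbundle of $E$ is exactly a section $C \to \PP E$. Hence $v$ induces a bijection between the two sets of sections, and I would show that as schemes the two families of sections are identified, matching the degree gradings as required.

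To pin down the correspondence on subsheaves, I would start from a line subbundle $N_1 \hookrightarrow E$ and apply relative $\Sym^2$ followed by the twist by $(\det E)^{-1} \otimes M$, obtaining $N_1^{\otimes 2} \otimes (\det E)^{-1} \otimes M \hookrightarrow \Sym^2 E \otimes (\det E)^{-1} \otimes M = V$. Over a point $p$, this is the Veronese point $v(N_1|_p)$, the line $\Lambda^2 \subset \Sym^2 E|_p$ spanned by $w^2$ for $w$ a generator of $\Lambda = N_1|_p$; setting all four arguments of (\ref{rk3form}) equal to $w$ shows $w^2$ is isotropic, so the subbundle is isotropic, as required. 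Since $N_1$ is locally a direct summand of $E$, the sheaf $N_1^{\otimes 2}$ is locally a direct summand of $\Sym^2 E$, so the image is a saturated line subbundle, and $\deg\bigl(N_1^{\otimes 2} \otimes (\det E)^{-1} \otimes M\bigr) = 2\deg(N_1) - d + m$. Equating this with $e$ gives $\deg(N_1) = \tfrac{1}{2}(e + d - m)$, the degree appearing in the source Quot scheme.

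Upgrading the point bijection to an isomorphism of schemes is essentially formal: since $v$ is an isomorphism of $C$-schemes, it induces an isomorphism between the functors of sections of $\PP E$ and of $Q_V$ over $C$, and these functors are represented by $\coprod_{d_1} \Quoto_{1, d_1}(E)$ and $\coprod_e \IQo_{1,e}(V)$ respectively. The only genuine content is checking that $v$ carries the degree-$d_1$ piece to the degree-$e$ piece with $e = 2d_1 - d + m$, which is precisely the computation of the previous paragraph. This yields the isomorphism of (a) for each fixed $e$.

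Statement (b) is then immediate from (a) and the convention that $\Quot_{1, d_1}(E)$ is empty when $d_1 \notin \bZ$. If $N \subset V$ is an isotropic line subbundle of degree $e$, then $\IQo_{1,e}(V) \neq \emptyset$, so $\Quoto_{1, \frac{1}{2}(e+d-m)}(E) \neq \emptyset$, forcing $\tfrac{1}{2}(e+d-m) \in \bZ$; hence $e + d - m$ is even, i.e.\ $\deg(N) = e \equiv d - m \pmod 2$. The main point throughout is Proposition \ref{rk3envelop}: once the quadric fibration is identified with the Veronese copy of $\PP E$, the only real work is the degree bookkeeping induced by the doubling $N_1 \mapsto N_1^{\otimes 2}$. (Alternatively, one could deduce the result from the rank four case: writing $V \perp M \cong E \otimes (E^* \otimes M)$ as in Theorem \ref{rk3structure} and applying Theorem \ref{rk4rk1}, the isotropic line subbundles of $V$ are those $N_1 \otimes N_2$ lying in the symmetric summand, which forces $N_2 = N_1 \otimes (\det E)^{-1} \otimes M$.)
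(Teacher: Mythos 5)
Your proposal is correct and takes essentially the same route as the paper: both arguments rest on Proposition \ref{rk3envelop} identifying $Q_V$ with the Veronese image of $\PP E$, so that isotropic line subbundles of $V$ correspond to line subbundles $N_1 \subset E$ via $N_1 \mapsto N_1^2 \otimes (\det E)^{-1} \otimes M$, with the degree bookkeeping and the parity statement (b) falling out exactly as you describe. The only differences are presentational — the paper starts from an isotropic $N \subset V$ and untwists to land in the cone over the Veronese, while you go in the forward direction and make the functor-of-points upgrade to a scheme isomorphism explicit — but the mathematical content is the same.
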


\begin{proof} 
 Let $N \subset V$ be an isotropic subbundle of degree $e$. Then $N \otimes (\det E) \otimes M^{-1}$ is a line subbundle of $\Sym^2 E$. By Proposition \ref{rk3envelop}, this belongs to the cone over the relative Segre embedding $\PP E \hookrightarrow \PP \Sym^2 E$. Thus it is of the form $\Sym^2 N_1$ for some $N_1 \subset E$ of degree $\frac{1}{2}(e + d - m)$. 
%
 In particular, $e \equiv d - m \equiv 0 \mod 2$. Clearly $N_1$ is uniquely determined as an element of $\Quoto_{1, \frac{1}{2}(e + d - m)} (E)$. The statement follows. \end{proof}

\begin{remark} A Riemann--Roch computation shows that $\Quoto_{1, \frac{1}{2} ( e + d - m )} ( E )$ has expected dimension $m - e - (g-1)$, which coincides with the expected dimension of $\IQo_{1, e} ( V )$ given in (\ref{Inle}), 
 in agreement with Corollary \ref{rk4IQ}. \end{remark}

\subsection{Enumeration of isotropic line subbundles}

Together with Theorem \ref{MaxLineSubbs}, the above results allow us to count the number of isotropic line subbundles of maximal degree of a general stable $V \in \MO^i (3, \Oc)$, when this is finite.


\begin{theorem} \label{rk3enum} Let $V$ be a stable $\Oc$-valued orthogonal bundle of rank three and trivial determinant.
\begin{enumerate}
\item[(a)] If $g$ is even and $V$ is general in $\MO^1 (3, \Oc)_\st$, then $V$ has $2^g$ maximal isotropic line subbundles of degree $1-g$.
\item[(b)] If $g$ is odd, and $V$ is general in $\MO^0 (3, \Oc)_\st$, then $V$ has $2^g$ maximal isotropic line subbundles of degree $1-g$.
\end{enumerate}
\end{theorem}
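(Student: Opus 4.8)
The plan is to reduce the enumeration to counting maximal line subbundles of a rank two bundle $E$ via the isomorphism of Theorem \ref{rk3IQ}(a), and then to apply Theorem \ref{MaxLineSubbs}. First I would use Theorem \ref{rk3moduli} to replace the general stable $V \in \MO^i(3, \Oc)_\st$ by a general $E \in \SU(2, \Oc)$ or $E \in \SU(2, \Oc(x))$: since $\Theta_3^0$ and $\Theta_3^1$ are finite surjective morphisms, a general $V$ in the respective component corresponds to a general $E$ in the respective moduli space. Recall that in case (a), $V = \Sym^2 E \otimes \Oc(-x)$ with $E \in \SU(2, \Oc(x))$, so $d = \deg(E) = 1$ and $m = \deg(M) = 0$; in case (b), $V = \Sym^2 E$ with $E \in \SU(2, \Oc)$, so $d = 0$ and $m = 0$.

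Next I would translate the notion of \emph{maximal} isotropic line subbundle across the isomorphism of Theorem \ref{rk3IQ}(a). Since $N \mapsto N^2 \otimes (\det E)^{-1} \otimes M$ identifies $\Quoto_{1, \frac{1}{2}(e + d - m)}(E)$ with $\IQo_{1,e}(V)$, the degree $e$ of an isotropic line subbundle of $V$ is maximal precisely when the corresponding line subbundle $N_1 \subset E$ has maximal degree $\frac{1}{2}(e + d - m)$. Thus enumerating maximal isotropic line subbundles of $V$ is equivalent to enumerating maximal line subbundles of $E$.

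Then I would invoke Theorem \ref{MaxLineSubbs} to count the maximal line subbundles of the general rank two bundle $E$. In case (b), $E \in \SU(2, \Oc)$ has $\deg(E) = 0 \equiv 1 - g \pmod 2$ when $g$ is odd, so $E$ has $2^g$ maximal line subbundles of degree $\frac{1}{2}(-(g-1)) = -\frac{g-1}{2}$; the corresponding isotropic line subbundle then has degree $e = 2 \cdot (-\frac{g-1}{2}) - d + m = -(g-1) = 1 - g$, as claimed. In case (a), $E \in \SU(2, \Oc(x))$ has $\deg(E) = 1 \equiv 1 - g \pmod 2$ when $g$ is even, so again $E$ has $2^g$ maximal line subbundles, this time of degree $\frac{1}{2}(1 - (g-1)) = \frac{2-g}{2}$; the corresponding $e = 2 \cdot \frac{2-g}{2} - 1 + 0 = 1 - g$, matching the statement.

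The main obstacle, and the point requiring the most care, is the genericity transfer in the first step: I must ensure that the hypothesis of Theorem \ref{MaxLineSubbs} (namely that $E$ is a \emph{general} stable bundle with the finiteness condition $\deg(E) \equiv 1 - g \bmod 2$) is legitimately available. This is exactly where finiteness and surjectivity of $\Theta_3^i$ are needed: because these morphisms are dominant onto the relevant components, the preimage of a general $V$ is a general $E$, so the generic behavior guaranteed by Theorem \ref{MaxLineSubbs} does hold for the $E$ arising here. The parity conditions on $g$ in (a) and (b) are precisely what make $\deg(E)$ satisfy the congruence $\deg(E) \equiv 1 - g \pmod 2$ needed for $\IQo_{1,e}(V)$ to be finite (equivalently, of expected dimension zero), consistent with Lemma \ref{zerodim}.
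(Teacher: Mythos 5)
Your proposal is correct and follows essentially the same route as the paper: reduce via the isomorphism of Theorem \ref{rk3IQ}(a) to counting maximal line subbundles of $E \in \SU(2,\Oc)$ or $\SU(2,\Oc(x))$, then apply Theorem \ref{MaxLineSubbs}, with the degree computations matching exactly. Your explicit justification of the genericity transfer through the finite surjective morphisms $\Theta_3^i$ of Theorem \ref{rk3moduli} is a point the paper leaves implicit, but it is the same underlying argument.
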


\begin{proof} By Theorem \ref{rk3IQ}, maximal line subbundles of $E$ are in bijection with maximal isotropic line subbundles of $\Sym^2 E \otimes (\det E)^{-1}$. If $g$ is even, then by Theorem \ref{MaxLineSubbs}, a general $E \in \SU (2, \Oc(x) )_\st$ has $2^g$ line subbundles $N_1$ of degree $1-\frac{g}{2}$. By Theorem \ref{rk3IQ}, these induce $2^g$ isotropic line subbundles $N_1^2 (-x) \subset \Sym^2 E \otimes \Oc(-x)$ of degree $1-g$, and we obtain (a). The proof of (b) is similar. 
\end{proof}

%
%

\section{Rank six} \label{rk6}

As was the case for orthogonal bundles of ranks three and four, the rank five case turns out to be closely related to the rank six case. Therefore we now turn to $L$-valued orthogonal bundles of rank six admitting rank three isotropic subbundles; equivalently, in view of Lemma \ref{DetIrns}, having determinant $L^3$.

\subsection{Rank six orthogonal bundles with rank three isotropic subbundles}

If $W$ is any bundle of rank four and determinant $L$, then there is a natural map
\begin{equation} \label{rk6form} \wedge^2 W \otimes \wedge^2 W \ \to \ \wedge^4 W \ = \ L \end{equation}
given by the determinant. This endows $\wedge^2 W$ with the structure of an $L$-valued orthogonal bundle. Furthermore, every rank three subbundle $F \subset W$ defines a rank three subbundle $\wedge^2 F \subset \wedge^2 W$, which is isotropic since $\wedge^4 F = 0$. We shall now show that up to a twist, any rank six orthogonal bundle admitting a rank three isotropic subbundle is of the form $\wedge^2 W$. (In {\S} \ref{rk6QV}, however, we shall see that not every rank three isotropic subbundle need be of the form $\wedge^2 F$, even up to twisting.)

As in the previous sections, we shall use this to give a description of the moduli spaces of rank six orthogonal bundles.
   
\begin{theorem} \label{rk6moduli} Let $x$ be any point of $C$.
\begin{enumerate}
\item[(a)] The association $W \mapsto \wedge^2 W$ defines a finite   surjective morphism
\[ \Theta_6^0 \colon \SU ( 4, \Oc ) \ \to \ \MO^0 ( 6, \Oc ) . \]
\item[(b)] The association $W_1 \mapsto \wedge^2 W_1 \otimes \Oc (x)$ defines a finite surjective morphism
\[ \Theta_6^1 \colon \SU ( 4, \Oc (-2x) ) \to \MO^1 ( 6, \Oc ) . \]
\item[(c)] The association $W_2 \mapsto \wedge^2 W_2$ defines a finite  surjective morphism
\[ \Phi_6 \colon \SU ( 4, \Oc (x) ) \ \to \ \MO ( 6, \Oc (x) ) . \]
\end{enumerate}
\end{theorem}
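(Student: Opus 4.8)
The plan is to follow exactly the template established in the proofs of Theorem \ref{rk4moduli} and Theorem \ref{rk3moduli}, since the map $W \mapsto \wedge^2 W$ plays the same role here as $E \mapsto \Sym^2 E$ did in rank three. The essential inputs are: (i) the operation $\wedge^2$ produces an $L$-valued orthogonal bundle (this is \eqref{rk6form}); (ii) the target lands in the correct component of moduli, which is governed by Theorem \ref{w2deg}; (iii) semistability is preserved; (iv) the map is well-defined on S-equivalence classes so that it descends to a morphism of moduli spaces; (v) a Picard-number-one argument forces finiteness; and (vi) a dimension count forces surjectivity.

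I would first define each morphism on the stable locus using a Poincar\'e bundle, after passing to a suitable \'etale cover, exactly as before. To see that the image lands in the stated component, I would exhibit an explicit isotropic subbundle and compute the parity of its degree: for $\Theta_6^0$, any line subbundle $N \subset W$ of degree $d$ gives a rank three isotropic subbundle $\wedge^2(N \oplus \cdot)$; more simply, a rank three subbundle $F \subset W$ yields the isotropic $\wedge^2 F \subset \wedge^2 W$ of degree $2\deg F$, whose parity is even, hence $w_2 = 0$ by Theorem \ref{w2deg}. For $\Theta_6^1$, the twist by $\Oc(x)$ shifts each such rank three isotropic subbundle's degree by $3$, producing odd degree and hence $w_2 = 1$. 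The determinant computations ($\det \wedge^2 W = (\det W)^3$, so that $\det W = \Oc$ gives $\det \wedge^2 W = \Oc$, while $\det W = \Oc(-2x)$ twisted by $\Oc(x)$ on each of the $\binom{4}{2}=6$ factors gives the correct determinant) should be checked but are routine. Preservation of semistability under $\wedge^2$ in characteristic zero is standard, and the fact that $W \sim W'$ implies $\wedge^2 W \sim \wedge^2 W'$ follows as in the rank four and three proofs, so that the Riemann extension theorem upgrades the continuous extension over the strictly semistable locus to a morphism.

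For finiteness, I would again invoke \cite[Th\'eor\`eme B (b)]{DN} to get that $\SU(4, N)$ has Picard number one; pulling back a very ample $\cN$ gives $(\Theta_6^0)^* \cN \cong \cL^a$ with $a > 0$, forcing finiteness since a morphism contracting no curve between projective varieties is finite. Surjectivity then follows by comparing dimensions: by \cite[Theorem 5.9]{Rth}, $\SU(4, N)$ has dimension $(4^2 - 1)(g-1) = 15(g-1)$, while $\MO(6, L)$ has dimension $\frac{1}{2}\cdot 6 \cdot 5 \cdot (g-1) = 15(g-1)$ by the Lemma on dimensions; the dimensions agree, so a finite morphism between irreducible projective varieties of equal dimension is surjective. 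The irreducibility of the target (implicit in the statements for $\MO^1$ and for $\MO(6, \Oc(x))$) follows from irreducibility of $\SU(4, N)$.

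The step I expect to be the main obstacle is \textbf{surjectivity as an honest statement about orthogonal structures}, as opposed to the numerical dimension count. Showing that \emph{every} semistable rank six orthogonal bundle with the right determinant actually arises as $\wedge^2 W$ for some $W$ requires the structural Theorem \ref{rk4structure}-analog in rank six — namely the as-yet-unstated result (presumably the next theorem in the paper) that any rank six $L$-valued orthogonal bundle admitting a rank three isotropic subbundle is isomorphic, as an orthogonal bundle, to $\wedge^2 W$. In the rank three and four cases, surjectivity of the moduli morphism was deduced purely from finiteness plus equal dimension, and the structural theorem was proved separately; I would follow the same division of labor here, so that the dimension count suffices for the moduli-level surjectivity and the fiberwise recovery of $W$ is deferred to the structure theorem. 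The subtlety is matching the triality-type ambiguity in recovering $W$ from $\wedge^2 W$ (reflecting the exceptional isomorphism $\Spin(6) \cong \SL(4)$ and the two spin representations), which is what makes $\Theta_6^0$ generically finite of degree greater than one rather than birational; this does not affect surjectivity but should be kept in mind when interpreting the fibers.
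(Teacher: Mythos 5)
Your proposal is correct and follows essentially the same route as the paper: the paper's proof likewise reduces to the template of Theorems \ref{rk4moduli} and \ref{rk3moduli} (Poincar\'e bundle on an \'etale cover, preservation of semistability and S-equivalence under $\wedge^2$, Riemann extension, Picard-number-one finiteness via \cite{DN}, equal-dimension surjectivity), and it determines the target component via Theorem \ref{w2deg} from exactly the parities you compute, $\deg(\wedge^2 F) = 2\deg F$ and $2\deg F + 3$ after twisting by $\Oc(x)$. The structural recovery of $W$ from $\wedge^2 W$ is indeed deferred to Proposition \ref{rk6StructureProp} and Theorem \ref{rk6Structure}, precisely the division of labor you anticipated.
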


\begin{proof} This is similar to the rank three case. As $\C$ has characteristic zero, $\wedge^2 W$ is semistable whenever $W$ is. Using for example \cite[Exercise II.5.16]{Har}, one checks that if $W$ and $W'$ are S-equivalent semistable bundles of rank four, then  $\wedge^2 W$ and $\wedge^2 W'$ are S-equivalent. By a similar argument to that in Theorem \ref{rk3moduli}, we obtain the existence, finiteness and surjectivity of the morphisms $\Theta^i_6$ and $\Phi_6$. The target component of each $\Theta_6^i$ is determined by  Theorem \ref{w2deg} and the parity of
\[
\deg (\wedge^2 F) = 2 \cdot \deg (F)  \ \text{ and  } \ \deg (\wedge^2 F)  \otimes \Oc (x) = 2 \cdot \deg (F) + 3 
\]
 for rank three subbundles $F \subset W$.
\end{proof}

 As in the previous cases, we describe a construction required for the next proposition which shows how to get the ``inverses'' of the morphisms $\Theta_6^i$ and $\Phi_6$. Let $V$ be a rank six $L$-valued orthogonal bundle admitting a rank three isotropic subbundle $E$. Then by Criterion \ref{extension}, there is an exact sequence $0 \to E \to V \to E^* \otimes L \to 0$, 
 defining an extension class $\delta \in H^1 ( C, \wedge^2 E \otimes L^{-1} )$. Then $\delta$ also defines an extension $0 \to \wedge^2 E \to W \to L \to 0$ of rank four.

\begin{proposition} \label{rk6StructureProp} Let $V$ be an $L$-valued orthogonal bundle of rank six. Let $E$, $\delta$ and $W$ be as above. Then $V$ is isomorphic to $( \wedge^2 W ) \otimes  (\det E)^{-1}$. \end{proposition}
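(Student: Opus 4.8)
The plan is to realize $(\wedge^2 W)\otimes(\det E)^{-1}$ as an extension of $E^*\otimes L$ by $E$ with $E$ isotropic, and then to compare its extension class with $\delta$ via Criterion \ref{extension}. First I would exploit that $\wedge^2 E\subset W$ is a rank three subbundle with line bundle quotient $L$: since $\wedge^2 L=0$, the induced filtration of $\wedge^2 W$ collapses to a short exact sequence
\[
0\ \to\ \wedge^2(\wedge^2 E)\ \to\ \wedge^2 W\ \to\ \wedge^2 E\otimes L\ \to\ 0.
\]
Using the rank three identities $\wedge^2 E\cong E^*\otimes\det E$ and $\wedge^2(\wedge^2 E)\cong E\otimes\det E$, and then twisting by $(\det E)^{-1}$, this becomes
\[
0\ \to\ E\ \to\ (\wedge^2 W)\otimes(\det E)^{-1}\ \to\ E^*\otimes L\ \to\ 0,
\]
which has the same outer terms as the defining sequence of $V$. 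A quick determinant count gives $\det\big((\wedge^2 W)(\det E)^{-1}\big)\cong L^3\cong\det V$; moreover $\wedge^2(\wedge^2 E)$ is isotropic in $\wedge^2 W$ (being $\wedge^2$ of a rank three subbundle of the rank four $W$, so $\wedge^4(\wedge^2 E)=0$), so the subbundle $E$ above is isotropic and the twisted natural form is $L$-valued.

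Next I would identify the extension class. Tensoring $0\to\wedge^2 E\to W\to L\to 0$ by the subbundle $\wedge^2 E$ and pushing out along the wedge map $\wedge^2 E\otimes\wedge^2 E\to\wedge^2(\wedge^2 E)$ yields exactly the sequence for $\wedge^2 W$ above; hence, by functoriality of extension classes under pushout, the class of $\wedge^2 W$ is the image of $\delta$ under the map on $H^1$ induced by the bundle morphism
\[
\psi\colon\ \wedge^2 E\ \to\ \Hom\big(\wedge^2 E,\ \wedge^2(\wedge^2 E)\big),\qquad \omega\mapsto\omega\wedge(-).
\]
Since twisting by $(\det E)^{-1}$ acts as the identity on the relevant $H^1$, and $\Hom(\wedge^2 E,\wedge^2(\wedge^2 E))\cong E\otimes E$ under the identifications above, the class of $(\wedge^2 W)(\det E)^{-1}$ in $H^1(C,E\otimes E\otimes L^{-1})$ is the image of $\delta\in H^1(C,\wedge^2 E\otimes L^{-1})$ under a canonical map $\wedge^2 E\to E\otimes E$.

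The crux is to show this canonical map $\wedge^2 E\to E\otimes E$ is a nonzero multiple of the standard inclusion. Rather than an explicit index computation, I would argue by equivariance: $\psi$ and all the rank three isomorphisms are built from canonical multilinear operations, so the induced map is $\GL(E)$-equivariant; since $E\otimes E=\wedge^2 E\oplus\Sym^2 E$ decomposes into distinct irreducibles, the space of such maps is one-dimensional, forcing a scalar multiple of the canonical inclusion $\wedge^2 E\hookrightarrow E\otimes E$, and a single fiber computation shows the scalar is nonzero. Consequently the class of $(\wedge^2 W)(\det E)^{-1}$ equals $c\,\delta$ for some $c\in\C^*$, hence lies in the $\Aut(E)\times\Aut(E^*\otimes L)$-orbit of $\delta$. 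By Criterion \ref{extension}, $V$ and $(\wedge^2 W)(\det E)^{-1}$ are therefore isomorphic as $L$-valued orthogonal bundles. I expect this equivariance step — correctly tracking the wedge functoriality and the rank three dualities through to the conclusion that the induced map is the inclusion — to be the main obstacle; the remaining steps are bookkeeping with exact sequences and determinants.
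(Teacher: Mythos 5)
Your proposal is correct, and its skeleton matches the paper's: both realize $\wedge^2 W$ as the pushout of the tensored sequence $0 \to \wedge^2 E \otimes \wedge^2 E \to W \otimes \wedge^2 E \to L \otimes \wedge^2 E \to 0$ along the wedge map, so that the class of $\wedge^2 W$ is the image of $\delta$ under a canonical bundle map into $\Hom ( L \otimes \wedge^2 E , \wedge^2 ( \wedge^2 E ) )$, and both then reduce the proposition to identifying this canonical map, via the rank three dualities $\wedge^2 E \cong E^* \otimes \det E$ and $\wedge^2(\wedge^2 E) \cong E \otimes \det E$, with a multiple of the standard inclusion $\wedge^2 E \hookrightarrow E \otimes E$. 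Where you genuinely diverge is at the step you yourself flag as the crux. The paper proves a sharper statement in the Lemma following the Proposition: by an explicit fiber computation with a basis $u, v, w$ and its dual, it shows that $a_* ( p \otimes \Iden_{\wedge^2 E} )$ corresponds exactly to $-p \otimes \Iden_M$, i.e.\ the scalar is $-1$. You instead invoke functoriality and Schur's lemma: all identifications are canonical in $E$, hence pointwise $\GL ( E|_p )$-equivariant, and since $\C^3 \otimes \C^3 = \wedge^2 \C^3 \oplus \Sym^2 \C^3$ with non-isomorphic irreducible summands, the space of equivariant maps $\wedge^2 \C^3 \to \C^3 \otimes \C^3$ is one-dimensional; injectivity of $\omega \mapsto \omega \wedge (-)$ then gives a nonzero scalar. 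This is valid and buys you freedom from all sign-tracking, and a nonzero scalar does suffice, since rescaling the extension class by $c \in \C^*$ is realized by the automorphism $c \cdot \Iden_E$, so the two classes lie in one orbit and the bundles are isomorphic; what the paper's computation buys in exchange is the exact constant, pinning down the class on the nose rather than up to scale. One small caveat: Criterion \ref{extension} only certifies that an extension class in $H^1 ( C, \wedge^2 E \otimes L^{-1} )$ admits an orthogonal structure with $E$ isotropic; the isomorphism you get from proportional classes is a priori one of vector bundles, which is all the Proposition asserts --- upgrading it to an isometry requires the supplementary argument the paper uses elsewhere (e.g.\ \cite[Proposition 3.1]{Gro}).
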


\begin{proof} To ease notation, write $\hE := \wedge^2 E$, a subbundle of $W$. We consider the antisymmetrization map $a \colon W \otimes W \to \wedge^2 W$. By linear algebra, using the fact that $\hE$ has corank one, the restriction of $a$ to $W \otimes \hE$ is surjective with kernel $\Sym^2 \hE$. Therefore, we have a diagram
\[ \xymatrix{ & \Sym^2 \hE \ar[r]^= \ar[d] & \Sym^2 \hE \ar[d] & & \\
 0 \ar[r] & \hE \otimes \hE \ar[r] \ar[d]^a & W \otimes \hE \ar[r] \ar[d]^a & L \otimes \hE \ar[r] \ar[d] & 0 \\
 0 \ar[r] & \wedge^2 \hE \ar[r] & \wedge^2 W \ar[r] & L \otimes \hE \ar[r] & 0. } \]
In particular, $\wedge^2 W$ is an extension of class
\begin{equation} a_* ( \delta \otimes \Iden_\hE ) \ \in \ H^1 ( C, \Hom ( L \otimes \hE , \wedge^2 \hE ) ) . \label{WExtClass} \end{equation}

Now write $M := \wedge^3 E = \det E$. The association
\[ u \wedge v \ \mapsto \ \left( w \ \mapsto u \wedge v \wedge w \right) \]
gives a canonical identification
\begin{equation} \hE \ = \ \wedge^2 E \ = \ \Hom ( E , \det E ) \ \isom \ E^* \otimes M . \label{wedge2identif} \end{equation}
Similarly, we have canonical identifications
\begin{equation} \wedge^2 \hE \ \isom  \ \wedge^2 ( E^* \otimes M ) \ \cong \ \wedge^2 E^* \otimes M^2 \ \isom \ E \otimes M . \label{6two} \end{equation}
Thus $\wedge^2 W$ is an extension $0 \to E \otimes M \to \wedge^2 W \to E^* \otimes M \otimes L \to 0$, and
\[ \Hom ( L \otimes \hE , \wedge^2 \hE ) \ = \ L^{-1} \otimes E \otimes M^{-1} \otimes E \otimes M \ \cong \ L^{-1} \otimes E \otimes E \otimes \End \; M . \]
Therefore, to prove the theorem, in view of (\ref{WExtClass}), (\ref{wedge2identif}) and (\ref{6two}) it will suffice to prove the following lemma. \end{proof}

\begin{lemma} The element $a_* ( p \otimes \Iden_\hE )$ corresponds to $-p \otimes \Iden_M$ for each $p \in L^{-1} \otimes \wedge^2 E$. \end{lemma}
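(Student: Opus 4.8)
The plan is to reduce the statement to a pointwise identity in linear algebra and then to determine a single scalar. Every ingredient — the antisymmetrization $a$, the wedge product $\hE \otimes \hE \to \wedge^2 \hE$, and the canonical identifications (\ref{wedge2identif}) and (\ref{6two}) — is natural in $E$, hence $\GL(E)$-equivariant and compatible with restriction to fibers. It therefore suffices to prove the corresponding statement for a single three-dimensional vector space, and the assertion for the bundle $E$ then follows by naturality.

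First I would unwind the definition. The class $\delta \otimes \Iden_\hE$ represents the sheaf homomorphism $L \otimes \hE \to \hE \otimes \hE$ given by $\ell \otimes x \mapsto \delta(\ell) \otimes x$; postcomposing with $a$ gives $\ell \otimes x \mapsto \delta(\ell) \wedge x$. Discarding the inert factor $L^{-1}$, the content of the lemma is the homomorphism $\hE \to \wedge^2 \hE$, $x \mapsto p \wedge x$, associated to $p \in \wedge^2 E = \hE$. Transporting its source and target through (\ref{wedge2identif}) and (\ref{6two}) presents it as an element of $\Hom(E^* \otimes M, E \otimes M) = E \otimes E \otimes \End M$, and the claim is that this element is $-p \otimes \Iden_M$.

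Next I would apply Schur's lemma. The construction $p \mapsto (x \mapsto p \wedge x)$ defines a $\GL(E)$-equivariant map $\wedge^2 E \to E \otimes E \otimes \End M$; since $\End M$ is the trivial representation, I may read the target as $\Sym^2 E \oplus \wedge^2 E$. As $\dim E = 3$, the irreducible representations $\Sym^2 E$ and $\wedge^2 E$ are non-isomorphic (of ranks six and three), so any equivariant map out of $\wedge^2 E$ must kill the $\Sym^2 E$-summand and be a scalar multiple of the canonical inclusion $p \mapsto p \otimes \Iden_M$. Hence $a_*(p \otimes \Iden_\hE) = c \cdot (p \otimes \Iden_M)$ for a universal constant $c$ independent of $E$ and $p$.

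Finally I would compute $c$ by evaluating on one element. Fixing a basis $e_1, e_2, e_3$ of $E$, writing $\omega = e_1 \wedge e_2 \wedge e_3$ and taking $p = e_1 \wedge e_2$, one identifies $p$ with $e_3^* \otimes \omega$ under (\ref{wedge2identif}), computes $p \wedge x$ as $x$ ranges over the induced basis of $\hE$, and reads off the resulting homomorphism $E^* \otimes M \to E \otimes M$ through (\ref{6two}). Comparison with $p \otimes \Iden_M$ then yields $c = -1$. The main obstacle is precisely this final bookkeeping: the value $c = -1$ rather than $+1$ hinges on tracking the sign conventions in the two identifications (\ref{wedge2identif}) and (\ref{6two}) — in particular the convention used to identify $\Hom(E^*, E)$ with $E \otimes E$ — consistently throughout, and this is where care is required.
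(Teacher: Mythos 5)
Your proposal is correct, and it reaches the result by a genuinely different route from the paper. The paper's proof is a direct computation: it writes $p = u \wedge v$, completes to a basis $u, v, w$ of the fiber, expresses $\Iden_{\hE}$ explicitly in the dual bases coming from (\ref{wedge2identif}), and pushes $p \otimes \Iden_{\hE}$ through $a_*$ and the identification (\ref{6two}) term by term, obtaining $-p \otimes \Iden_M$ for arbitrary $p$ in one pass. You instead observe that $p \mapsto a_*(p \otimes \Iden_{\hE})$ is a $\GL(E)$-equivariant map $\wedge^2 E \to E \otimes E \otimes \End M \cong \Sym^2 E \oplus \wedge^2 E$, and since for $\rank E = 3$ these summands are non-isomorphic irreducible representations, Schur's lemma forces the map to equal $c \cdot \left( p \mapsto p \otimes \Iden_M \right)$ for a universal scalar $c$; this explains a priori why no $\Sym^2 E$-component can occur and cuts the paper's entire computation down to a single evaluation. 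Two remarks. First, your write-up stops short of actually performing that evaluation, and since the scalar --- in particular its sign --- is the whole content of the lemma, a complete proof must include it; I confirm that with the paper's conventions (the identifications (\ref{DualBases}), the ordering $\Hom ( \hE , \wedge^2 \hE ) \cong \wedge^2 \hE \otimes \hE^*$ with the target factor first, and $u \wedge v = u \otimes v - v \otimes u$ inside $E \otimes E$) your evaluation at $p = e_1 \wedge e_2$ does yield $c = -1$, as you assert, so the plan goes through. Second, a small extra payoff of your approach: for the application in Proposition \ref{rk6StructureProp}, the exact value of $c$ is immaterial, because proportional nonzero extension classes define isomorphic bundles; thus Schur's lemma together with the trivial observation that $p \wedge x \neq 0$ for suitable $x$ (hence $c \neq 0$) already suffices for the structure theorem, whereas the paper's argument has no way to shortcut the sign-tracking.
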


\begin{proof} In what follows, we shall assume that $L = \Oc$, as the general case is only notationally more cumbersome. Let $p$ be any element of $L^{-1} \otimes \wedge^2 E|_x = \wedge^2 E|_x$ for some $x \in C$. As $E$ has rank three, we can write $p = u \wedge v$ for some $u, v \in E|_x$. If $p$ is nonzero then we can choose $w \in E|_x$ such that $u , v , w$ is a basis of $E|_x$. Let $u^* , v^* , w^*$ be the dual basis of $E^*|_x$. Set $m := u \wedge v \wedge w \in M|_x$. Then $u^* \wedge v^* \wedge w^* = m^*$ where $m^* \in M^*|_x$ is dual to $m$ (that is, $\langle m^* , m \rangle = 1$). Then via (\ref{wedge2identif}), we have identifications
\begin{equation} u \wedge v \ \leftrightarrow \ w^* \otimes m , \quad u \wedge w \ \leftrightarrow \ (-v)^* \otimes m , \quad \hbox{and} \quad v \wedge w \ \leftrightarrow \ u^* \otimes m . \label{DualBases} \end{equation}
Via these identifications, and using the contraction $m^* \otimes m \mapsto 1$, the map $\Iden_{\wedge^2 E}|_x$ can be identified with
\[ ( u \wedge v ) \otimes m^* \otimes w + ( u \wedge w ) \otimes m^* \otimes ( -v ) + ( v \wedge w ) \otimes m^* \otimes u . \]
Thus $p \otimes \Iden_{\wedge^2 E}$ is expressed as
\[ ( u \wedge v ) \otimes ( u \wedge v ) \otimes m^* \otimes w + ( u \wedge v ) \otimes ( u \wedge w ) \otimes m^* \otimes ( -v ) + ( u \wedge v ) \otimes ( v \wedge w ) \otimes m^* \otimes u . \]
Substituting from (\ref{DualBases}) (thereby using the first identification in (\ref{6two})), we obtain
\[ a_*(p \otimes \Iden_{\wedge^2 E})  =   ( w^* \otimes m ) \wedge ( (-v^*) \otimes m ) \otimes m^* \otimes ( -v ) + ( w^* \otimes m ) \wedge ( u^* \otimes m ) \otimes m^* \otimes u . \]
Applying the contraction $m \otimes m^* \mapsto 1$, this becomes
\begin{equation} ( w^* \wedge v^* ) \otimes v \otimes m + ( w^* \wedge u^* ) \otimes u \otimes m . \label{6one} \end{equation}
Now since $u^* \wedge v^* \wedge w^* = m^*$, in particular $w^* \wedge v^*$ is the dual basis element to $(-u)^* \otimes m$. Thus via the identification $\wedge^2 E^* \isom \Hom ( E^* , M^{-1} ) \cong E \otimes M^{-1}$, we see that $w^* \wedge v^*$ corresponds to $(-u) \otimes m^*$. Similarly, $w^* \wedge u^*$ corresponds to $v \otimes m^*$. Thus (\ref{6one}) is identified with
\[ ( -u ) \otimes m^* \otimes v \otimes m + v \otimes m^* \otimes u \otimes m , \]
corresponding to $-(u \wedge v) \otimes \Iden_M = - p \otimes \Iden_M$, as desired. \end{proof}

We can now give a structure theorem for orthogonal bundles of rank six. 

\begin{theorem} \label{rk6Structure} Let $V$ be an $L$-valued orthogonal bundle of rank six.
\begin{enumerate}
\item[(a)] The bundle $V$ is isomorphic to $\wedge^2 W$ for some $W$ of rank four and determinant $L$ if and only if $V$ admits a rank three isotropic subbundle of even degree.
\item[(b)] Let $x$ be any point of $C$. Then $V$ is isomorphic to $\wedge^2 W_1 \otimes \Oc(x)$ for some $W_1$ of rank four and determinant $L(-2x)$ if and only if $V$ admits a rank three isotropic subbundle of odd degree.
\end{enumerate}
Note that (a) and (b) apply simultaneously when $\deg (L)$ is odd.
\end{theorem}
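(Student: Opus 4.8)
The plan is to deduce both parts from Proposition \ref{rk6StructureProp} together with one elementary observation: twisting a rank four bundle $W$ by a line bundle $N$ replaces $\wedge^2 W$ by $(\wedge^2 W)\otimes N^2$ and $\det W$ by $(\det W)\otimes N^4$. I will also use that from the defining extension $0 \to \wedge^2 E \to W \to L \to 0$ of Proposition \ref{rk6StructureProp} one reads off $\det W \cong \det(\wedge^2 E)\otimes L \cong (\det E)^2 \otimes L$, and that a line bundle on $C$ admits a square root precisely when its degree is even.

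For the ``if'' directions I would start from a rank three isotropic subbundle $E \subset V$. Proposition \ref{rk6StructureProp} yields a rank four bundle $W$ with $V \cong (\wedge^2 W)\otimes(\det E)^{-1}$ and $\det W \cong (\det E)^2 \otimes L$. In case (a), where $\deg(E)$ is even, the line bundle $(\det E)^{-1}$ has even degree, so I pick a square root $N$ and set $W' := W \otimes N$; then $\wedge^2 W' \cong (\wedge^2 W)\otimes N^2 \cong (\wedge^2 W)\otimes(\det E)^{-1} \cong V$ and $\det W' \cong (\det E)^2 \otimes L \otimes N^4 \cong L$. In case (b), where $\deg(E)$ is odd, the line bundle $(\det E)^{-1}(-x)$ has even degree $-\deg(E)-1$, so I pick a square root $N$ of it and set $W_1 := W \otimes N$; the same computation gives $\wedge^2 W_1 \otimes \Oc(x) \cong V$ and $\det W_1 \cong L(-2x)$. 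The parity hypothesis is used in exactly one place, namely to guarantee the existence of these square roots, and this is the conceptual heart of the statement.

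For the ``only if'' directions I would use the isotropic subbundles of the form $\wedge^2 F$. Any rank four bundle contains a rank three subbundle $F$ (for instance the kernel of a surjection onto a line bundle quotient), and as noted after (\ref{rk6form}), $\wedge^2 F \subset \wedge^2 W$ is isotropic of even degree $2\deg(F)$; this settles (a). For (b), starting from $V \cong (\wedge^2 W_1)\otimes\Oc(x)$ with $\det W_1 \cong L(-2x)$, I take $F \subset W_1$ of rank three and twist: $\wedge^2 F \otimes \Oc(x)$ is a rank three subbundle of $V$, isotropic because isotropy is preserved under tensoring by a line bundle, and of odd degree $2\deg(F)+3$.

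Finally, the simultaneity remark for $\deg(L)$ odd would follow from Lemma \ref{EvenAndOddDeg}: if $V$ admits any rank three isotropic subbundle at all (equivalently $\det V \cong L^3$, by Proposition \ref{DetIrns}), then it admits subbundles of both parities, so the hypotheses of (a) and (b) hold together, and otherwise both fail. Since Proposition \ref{rk6StructureProp} carries out all the structural work, I expect no serious obstacle: the argument is essentially a matter of tracking determinants and parities, with the one genuinely substantive point being the correct use of the square-root existence that links the parity of $\deg(E)$ to whether $\det W$ equals $L$ or $L(-2x)$.
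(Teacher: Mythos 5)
Your proof is correct and follows essentially the same route as the paper: both deduce the ``if'' directions from Proposition \ref{rk6StructureProp} by twisting $W$ by a square root of a line bundle whose existence is exactly what the parity hypothesis guarantees, and both obtain the ``only if'' directions from the isotropic subbundles $\wedge^2 F$ (twisted by $\Oc(x)$ in case (b)) arising from rank three subbundles $F$ of the rank four bundle. Your explicit justification of the simultaneity remark via Proposition \ref{DetIrns} and Lemma \ref{EvenAndOddDeg} is a small completion of a point the paper leaves implicit.
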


\begin{proof} As the proofs of (a) and (b) are almost identical, we prove (b) only. Let $E$ be a rank three subbundle of $V$ which is isotropic and of odd degree. By Proposition \ref{rk6StructureProp}, we have $V \cong \wedge^2 W \otimes \det E^*$ where $0 \to \wedge^2 E \to W \to L \to 0$ is constructed from $V$ and $E$ as previously. By hypothesis, we can find $N$ such that $\det E \cong N^2 (-x)$. Set $W_1 := W \otimes N^{-1}$. Then
\[ \det W_1 \ \cong \ (\det E)^2 \otimes L \otimes N^{-4} \ \cong \ N^4 ( -2x ) \otimes L \otimes N^{-4} \ \cong \ L ( -2x ) , \]
and $V \cong \wedge^2 W \otimes N^{-2} \otimes \Oc (x) = ( \wedge^2 W_1 ) \otimes \Oc (x)$.

Conversely, if $V = ( \wedge^2 W_1 ) \otimes \Oc (x)$ then any rank three subbundle $F \subset W_1$ yields a subbundle $\wedge^2 F \otimes \Oc(x) \subset V$ which is isotropic since $\wedge^4 F = 0$, and has odd degree $2 \cdot \deg F + 3$. \end{proof}

\subsection{Enveloping bundle structure and isotropic subbundles} \label{rk6QV}

Let $W$ be a bundle of rank four and determinant $L$, and let $V := \wedge^2 W$ be the associated $L$-valued orthogonal bundle of rank six. By \cite[pp.\ 209--211]{GH}, an element $\omega \in \wedge^2 W$ belongs to the cone over the Pl\"ucker image of $\Gr ( 2, W )$ in $\PP ( \wedge^2 W )$ if and only if $\omega$ is decomposable; equivalently $\omega \wedge \omega = 0$ in $\wedge^4 W$. In view of (\ref{rk6form}), this is equivalent to $\omega$ being isotropic with respect to the orthogonal structure on $\wedge^2 W$. It follows that:

\begin{proposition} \label{rk6Envelop} The relative Pl\"ucker map embeds $\Gr ( 2, W )$ in $\PP ( \wedge^2 W )$ as the projectivization $Q_{\wedge^2 W}$ of the locus of vectors isotropic with respect to the orthogonal structure on $\wedge^2 W$. In particular, $\PP ( \wedge^2 W )$ is an enveloping bundle for $Q_{\wedge^2 W} = \Gr ( 2, W ) \to C$. \end{proposition}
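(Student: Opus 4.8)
The plan is to reduce the assertion to a pointwise statement over $C$ and then invoke the classical linear algebra of two-vectors in a four-dimensional space. Both the quadric subfibration $Q_{\wedge^2 W} \subset \PP ( \wedge^2 W )$ and the relative Pl\"ucker image of $\Gr ( 2, W )$ are defined fiber by fiber and vary algebraically with $p \in C$, so it suffices to identify them in each fiber $\wedge^2 W|_p \cong \wedge^2 ( W|_p )$, where $W|_p \cong \C^4$. First I would unwind the form (\ref{rk6form}): writing $U := W|_p$, the induced pairing on $\wedge^2 U$ is simply $\omega \otimes \omega' \mapsto \omega \wedge \omega' \in \wedge^4 U = L|_p$, which is symmetric since $\omega$ and $\omega'$ have even degree. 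Consequently a line $[\omega] \in \PP ( \wedge^2 U )$ is isotropic precisely when $\omega \wedge \omega = 0$ in $\wedge^4 U$.

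Next I would recall the standard fact (e.g.\ \cite[pp.\ 209--211]{GH}) that for $\dim U = 4$, a nonzero $\omega \in \wedge^2 U$ is decomposable, that is $\omega = u_1 \wedge u_2$ for some $u_1, u_2 \in U$, if and only if $\omega \wedge \omega = 0$; moreover the decomposable two-vectors form exactly the affine cone over the Pl\"ucker image of $\Gr ( 2, U )$ in $\PP ( \wedge^2 U )$. Chaining the equivalences gives, for each $p$, that $[\omega] \in Q_{\wedge^2 W}|_p$ iff $\omega \wedge \omega = 0$ iff $\omega$ is decomposable iff $[\omega]$ lies in the Pl\"ucker image of $\Gr ( 2, U )$, which is the desired fiberwise identification. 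Since in dimension four the single Pl\"ucker relation coincides with the single quadratic equation cutting out $Q_{\wedge^2 W}$, the two loci agree not merely set-theoretically but as divisors in $\PP ( \wedge^2 W )$; hence the relative Pl\"ucker map is an isomorphism of $\Gr ( 2, W )$ onto $Q_{\wedge^2 W}$.

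The closing assertion, that $\PP ( \wedge^2 W )$ is an enveloping bundle for $Q_{\wedge^2 W} = \Gr ( 2, W ) \to C$, is then immediate from the definition in {\S}\ref{envelop}. I do not expect a serious obstacle here: essentially all of the content is the identification of isotropic with decomposable two-vectors, which is already isolated in the paragraph preceding the statement, together with the observation that every construction involved is natural over $C$ and therefore globalizes without extra work. The only point meriting a little care is the comparison of scheme structures, handled by noting that both subschemes of $\PP ( \wedge^2 W )$ are cut out by the same single quadratic equation $\omega \wedge \omega = 0$.
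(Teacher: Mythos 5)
Your proposal is correct and follows essentially the same route as the paper: the paper's argument (given in the paragraph preceding the proposition) likewise identifies isotropy under the form (\ref{rk6form}) with the condition $\omega \wedge \omega = 0$, and then invokes \cite[pp.\ 209--211]{GH} to equate this with decomposability, i.e.\ membership in the cone over the Pl\"ucker image of $\Gr(2,W)$. Your additional remarks on fiberwise reduction and on the agreement of scheme structures (both loci being cut out by the same single quadric) are harmless elaborations of the same argument.
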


We shall also require the following lemma on isotropic subbundles of orthogonal bundles.

\begin{lemma} \label{SimpleIsotropyCrit} Let $V$ be an $L$-valued orthogonal bundle (of arbitrary rank) and $E \subset V$ any subbundle. Then $E$ is isotropic if and only if $\PP E \subset Q_V$. \end{lemma}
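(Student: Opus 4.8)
The plan is to reduce the statement to a fiberwise linear-algebra fact, namely the polarization identity, which is available because $\C$ has characteristic zero. First I would spell out the two conditions pointwise. By the definition of $Q_V$ recalled in {\S}~\ref{envelop}, the inclusion $\PP E \subset Q_V$ holds precisely when, for every $p \in C$, each line in the fiber $E|_p$ consists of isotropic vectors; that is, the quadratic form $v \mapsto \sigma(v \otimes v)$ vanishes identically on $E|_p$. On the other hand, $E$ is isotropic exactly when the symmetric bilinear form $\sigma$ restricts to zero on $E \otimes E$, which is again a fiberwise condition: $\sigma(e \otimes e') = 0$ for all $p \in C$ and all $e, e' \in E|_p$. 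Thus it suffices to prove, on a fixed fiber, that the quadratic form vanishes on $E|_p$ if and only if the bilinear form vanishes on $E|_p \otimes E|_p$.

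The implication from isotropy to $\PP E \subset Q_V$ is immediate: if $\sigma(e \otimes e') = 0$ for all $e, e' \in E|_p$, then in particular $\sigma(v \otimes v) = 0$ for every $v \in E|_p$, so every line in $E|_p$ lies in $Q_V$. For the converse I would invoke polarization. Assuming $\sigma(v \otimes v) = 0$ for all $v \in E|_p$, substituting $v = e + e'$ and using the symmetry of $\sigma$ gives
\[ \sigma(e \otimes e') \ = \ \frac{1}{2}\left( \sigma\big((e + e') \otimes (e + e')\big) - \sigma(e \otimes e) - \sigma(e' \otimes e') \right) \ = \ 0 . \]
Hence $\sigma$ vanishes on $E \otimes E$, so $E$ is isotropic. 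Assembling the fiberwise equivalences over all $p \in C$ yields the global statement.

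There is no substantial obstacle here; this is a standard fact packaged in a bundle setting. The only points requiring care are the bookkeeping that both conditions are genuinely fiberwise, so that verifying the equivalence on each fiber $V|_p$ suffices, and the observation that the characteristic-zero hypothesis (standing throughout the paper) is exactly what makes the factor $\tfrac{1}{2}$ meaningful and hence polarization valid. I would keep the write-up short, as the content is entirely the polarization identity applied fiberwise.
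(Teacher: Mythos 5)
Your proof is correct and follows essentially the same route as the paper: the forward direction is immediate from the definition of $Q_V$, and the converse is the fiberwise polarization identity $2\,\sigma(v \otimes w) = \sigma\bigl((v+w)\otimes(v+w)\bigr) - \sigma(v \otimes v) - \sigma(w \otimes w)$, relying on the symmetry of $\sigma$. Your remark about characteristic zero being essential matches the caveat the paper itself makes immediately after the lemma.
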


\begin{proof} One direction is clear from the definition of $Q_V$. Conversely, suppose $\PP E \subset Q_V$. Let $v, w$ be elements of a fiber $E|_x$. By symmetry of $\sigma$, we have
\[ 2 \cdot \sigma ( v \otimes w ) \ = \ \sigma \left( (v + w) \otimes ( v + w ) \right) - \sigma ( v \otimes v ) - \sigma ( w \otimes w ) . \]
By hypothesis, the right hand side is zero, so $\sigma ( v \otimes w ) = 0$. \end{proof}

\begin{remark} Note that the above proof depends on the symmetry of the bilinear form, and may fail in characteristic $2$. \end{remark}

We shall use these observations to classify isotropic subbundles of $V$, beginning with rank three. By Proposition \ref{rk6Envelop} and Lemma \ref{SimpleIsotropyCrit}, a rank three subbundle $E \subset V$ is isotropic if and only if the $\PP^2$-subbundle $\PP E \subset \PP V$ is contained in $\Gr ( 2, W )$. For such an $E$, by \cite[p.\ 757]{GH}, one of the following two situations must arise:
\begin{enumerate}
\item[(i)] For each $p \in C$, there exists a three-dimensional subspace $F_p \subset V|_p$ such that
\[ \PP E|_p \ = \ \{ \Lambda \in \Gr ( 2, W|_p ) : \Lambda \subset F_p \} . \]
\item[(ii)] For each $p \in C$, there exists a line $N_p \subset W|_p$ such that
\[ \PP E|_p \ = \ \{ \Lambda \in \Gr ( 2, W|_p ) : N_p \subset \Lambda \} . \]
\end{enumerate}

If situation (i) arises, then, using local triviality, there exists a rank three subbundle $F \subset W$ such that $\PP E = \Gr ( 2, F )$. By definition of the Pl\"ucker embedding $\Gr ( 2, F ) \to \PP ( \wedge^2 F ) \subset \PP ( \wedge^2 W )$ it follows that $\PP E$ is the projectivization of the set of bivectors
\[ \{ v \wedge w : v, w \in F \} . \]
As $F$ has rank three, every element of $\wedge^2 F$ is of this form. Thus $E = \wedge^2 F$. In particular, $\deg E = 2 \cdot \deg F$ is even.

On the other hand, suppose situation (ii) arises. Then there is a line subbundle $N \subset W$ such that $\PP E$ is the projectivization of $\{ v \wedge w : v \in N, w \in W \}$. We define a map $\tau_N \colon N \otimes W \to \wedge^2 W$ by
\begin{equation}   \tau_N ( v \otimes w ) \ = \ v \wedge w . \label{TauN} \end{equation} 
Clearly $\tau_N$ has kernel $N \otimes N$ and image exactly $E$. Thus $E \cong N \otimes \frac{W}{N}$, and
\[ \deg E \ = \ 3 \cdot \deg N + \deg W - \deg N \ = \ 2 \cdot \deg N + \deg L . \]
In particular, $\deg E \equiv \deg L \mod 2$. 
 Now we can summarize the above discussion as follows.

\begin{theorem} \label{rk6IsotSubbs} Let $V = \wedge^2 W$ be a rank six orthogonal bundle for a rank four bundle $W$ of determinant $L$. Set $\ell := \deg L$ and let $d$ be any integer.
\begin{enumerate}
\item[(a)] Suppose $\ell$ is even. Then there is an isomorphism
\[ \Quoto_{3, d} ( W ) \ \sqcup \ \Quoto_{1, d - \frac{\ell}{2}} ( W ) \ \isom \ \IQo_{3, 2d} (  V) . \]
In particular, $\IQo_{3, 2d} ( V )$ is disconnected for $d \ll 0$.
\item[(b)] Suppose $\ell$ is odd. Then there are isomorphisms
\[ \Quoto_{3, d} ( W ) \ \isom \ \IQo_{3, 2d} (  V ) \quad \hbox{and} \quad \Quoto_{1, d - \frac{\ell+1}{2}} ( W ) \ \isom \ \IQo_{3, 2d-1} (  V ) . \]
\end{enumerate}
In both cases, the isomorphisms are given respectively by
\[ E \ \mapsto \ \wedge^2 E \quad \hbox{and} \quad N \ \mapsto \ \tau_N ( N \otimes W ) \ \cong \ N \otimes \frac{W}{N} . \]
\end{theorem}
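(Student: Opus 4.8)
The pointwise classification needed is already in hand: by Proposition \ref{rk6Envelop} and Lemma \ref{SimpleIsotropyCrit} a rank three subbundle $E \subset V = \wedge^2 W$ is isotropic exactly when $\PP E \subset \Gr(2,W)$, and the discussion preceding the theorem shows, via the description of the two families of planes in $\Gr(2,4)$, that every such $E$ is of exactly one of two types at each point: type (i), with $E = \wedge^2 F$ for a rank three subbundle $F \subset W$ and $\deg E = 2\deg F$; or type (ii), with $E = \tau_N(N \otimes W) \cong N \otimes (W/N)$ for a line subbundle $N \subset W$ and $\deg E = 2\deg N + \ell$. Since two saturated subsheaves of $V$ agreeing over a dense open subset of $C$ coincide, the type is constant along $C$. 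My plan is therefore to (1) record the degree bookkeeping, (2) promote the two constructions to morphisms of schemes using universal subbundles, and (3) exhibit morphism inverses, so that each construction is an isomorphism onto the corresponding locus of $\IQo_{3,e}(V)$.

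First, the degree count. In type (i), $\deg E = 2d$ iff $\deg F = d$, giving the factor $\Quoto_{3,d}(W)$. In type (ii), $\deg E = 2\deg N + \ell$; when $\ell$ is even this equals $2d$ iff $\deg N = d - \tfrac{\ell}{2}$, giving $\Quoto_{1, d - \ell/2}(W)$, whereas when $\ell$ is odd it is always odd and equals $2d-1$ iff $\deg N = d - \tfrac{\ell+1}{2}$. Thus for $\ell$ even both types contribute to $\IQo_{3,2d}(V)$, while for $\ell$ odd type (i) fills $\IQo_{3,2d}(V)$ and type (ii) fills $\IQo_{3,2d-1}(V)$; in the odd case the parity of $\deg E$ alone separates the two families, so no further disjointness argument is needed there.

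To make the maps morphisms, I would apply the constructions $F \mapsto \wedge^2 F$ and $N \mapsto \tau_N(N \otimes W)$ to the universal subbundles over $\Quoto_{3,d}(W) \times C$ and $\Quoto_{1,*}(W) \times C$ respectively. Each produces a flat family of rank three isotropic subbundles of $V$ (constant rank, and isotropic since their projectivizations lie in $\Gr(2,W)$), hence by the universal property of the Quot scheme a morphism into $\IQo_{3,e}(V)$ landing in the saturated locus. For the inverses I would use two natural contraction operations, which work verbatim in families over $\IQo_{3,e}(V)$. In type (i), the interior-product map $c \colon E \otimes W^* \to W$, $\eta \otimes \alpha \mapsto \iota_\alpha \eta$, has image exactly $F$, a rank three subbundle, recovering $F$ from $E = \wedge^2 F$. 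In type (ii), the wedge map $W \to \Hom(E, \wedge^3 W)$, $v \mapsto (\eta \mapsto v \wedge \eta)$, has kernel exactly the line subbundle $N$, recovering $N$ from $E = N \otimes (W/N)$. These inverse morphisms show that each forward construction is an isomorphism onto its image.

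It remains, for $\ell$ even, to identify $\IQo_{3,2d}(V)$ with the disjoint union of the two images. These images are closed, being images of the projective Quot schemes; they are disjoint on points, since a fixed plane cannot lie in both families of $\Gr(2,4)$ (equivalently, $c$ has image of rank three in type (i) and rank four in type (ii)); and their union is all of $\IQo_{3,2d}(V)$ by the pointwise classification. Hence each image is open and closed, giving the claimed decomposition as schemes, and the disconnectedness for $d \ll 0$ follows because both $\Quoto_{3,d}(W)$ and $\Quoto_{1, d - \ell/2}(W)$ are then nonempty. I expect the main work to lie in step (3): verifying that the contraction constructions genuinely produce the asserted subbundles in families and assemble into morphisms inverse to the forward maps, together with the open-and-closed decomposition in the $\ell$ even case; the remainder is the degree bookkeeping already set up by the geometric discussion preceding the theorem.
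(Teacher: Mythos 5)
Your proposal follows essentially the same route as the paper: the paper's proof of this theorem \emph{is} the discussion you cite --- the two families of planes in $\Gr(2,W|_p)$ from Griffiths--Harris, promoted to the bundle level by local triviality, together with the maps $F \mapsto \wedge^2 F$, $N \mapsto \tau_N(N \otimes W)$ and the degree bookkeeping --- and the theorem is stated there simply as a summary of that discussion, so your scheme-theoretic elaboration (universal families, contraction inverses, the open-and-closed decomposition) only adds rigor the paper omits. The one slip is your justification that the two images are closed ``being images of the projective Quot schemes'': $\Quoto$ is the \emph{open} locus of saturated subsheaves and is generally not proper, but this is repaired by ingredients you already have, since semicontinuity of the generic rank of the contraction $c \colon E \otimes W^* \to W$ (rank $3$ in type (i), $4$ in type (ii)) and of the wedge map $W \to \Hom(E, \wedge^3 W)$ (rank $4$ in type (i), $3$ in type (ii)) makes each of the two loci open, hence both open and closed.
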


\begin{remark} A Riemann--Roch computation shows that $\Quot_{3, d} (W)$ and $\Quot_{1, d - \frac{\ell + \varepsilon}{2}} ( W )$ have the same expected dimensions respectively as those given in (\ref{Inle}) for $\IQo_{3, 2d} ( \wedge^2 W )$ and $\IQo_{3, 2d - \varepsilon} ( \wedge^2 W )$, in agreement with Theorem \ref{rk6IsotSubbs}. \end{remark}

Lastly, we treat the case where all rank three isotropic subbundles have odd degree, so $V$ is of the form $\wedge^2 W \otimes \Oc(x)$.

\begin{corollary} \label{rk6IsotSubbsCor} Suppose $\ell$ is even. Let $W$ be a rank four bundle of determinant $L$, and let $V$ be the orthogonal bundle $\wedge^2 W \otimes \cO_C(x)$. Then for any $d$, there are isomorphisms
\[ \Quoto_{3, d-1} ( W ) \ \sqcup \ \Quoto_{1, d - 1 - \frac{\ell}{2}} ( W ) \ \isom \ \IQo_{3, 2d + 1} ( V) , \]
given by
\[ \begin{cases} E \ \mapsto \ \wedge^2 E \otimes \Oc(x) \hbox{ if } E \in \Quoto_{3, d-1} ( W ) ; \\
 M \ \mapsto \ \tau_N ( N \otimes W ) \otimes \Oc (x) \ \cong \ N \otimes \frac{W}{N} \otimes \Oc (x) \hbox{ if } E \in \Quoto_{1, d - 1 - \frac{\ell}{2}} ( W ) . \end{cases} \]
In particular, $\IQo_{3, 2d + 1} (V)$ is disconnected for $d \ll 0$.
\end{corollary}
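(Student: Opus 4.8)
The plan is to reduce Corollary \ref{rk6IsotSubbsCor} to the already-established Theorem \ref{rk6IsotSubbs} by absorbing the twist by $\Oc(x)$. The key observation is that tensoring by a line bundle gives a canonical isomorphism between isotropic Quot schemes: if $V = \wedge^2 W \otimes \Oc(x)$, then a rank three subbundle $E \subset \wedge^2 W$ is isotropic with respect to the form $\wedge^2 W \otimes \wedge^2 W \to L$ if and only if $E \otimes \Oc(x) \subset V$ is isotropic with respect to the induced $L(2x)$-valued form, since the isotropy condition is unchanged under twisting. Thus there is a canonical isomorphism $\IQo_{3, e} ( \wedge^2 W ) \isom \IQo_{3, e + 3} ( \wedge^2 W \otimes \Oc(x) )$ sending $E$ to $E \otimes \Oc(x)$, the degree shifting by $3 \cdot \deg \Oc(x) = 3$ because the subbundle has rank three.

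First I would set up this twisting isomorphism explicitly and verify the degree bookkeeping: to land in $\IQo_{3, 2d+1}(V)$, I need the source to be $\IQo_{3, (2d+1) - 3}( \wedge^2 W ) = \IQo_{3, 2d - 2}( \wedge^2 W )$. Writing $2d - 2 = 2(d-1)$, this is exactly $\IQo_{3, 2(d-1)}( \wedge^2 W )$, which is the even-degree isotropic Quot scheme treated in Theorem \ref{rk6IsotSubbs}(a) since $\ell$ is assumed even. Applying that theorem with the integer $d$ replaced by $d-1$ yields
\[ \Quoto_{3, d-1} ( W ) \ \sqcup \ \Quoto_{1, (d-1) - \frac{\ell}{2}} ( W ) \ \isom \ \IQo_{3, 2(d-1)} ( \wedge^2 W ) . \]
Composing with the twisting isomorphism $\IQo_{3, 2(d-1)}( \wedge^2 W ) \isom \IQo_{3, 2d+1}(V)$ gives precisely the claimed decomposition, and the explicit maps $E \mapsto \wedge^2 E \otimes \Oc(x)$ and $N \mapsto \tau_N(N \otimes W) \otimes \Oc(x)$ are obtained by post-composing the maps of Theorem \ref{rk6IsotSubbs} with the twist.

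For the final disconnectedness claim, I would simply transport it through the isomorphism: Theorem \ref{rk6IsotSubbs}(a) asserts that $\IQo_{3, 2(d-1)}(\wedge^2 W)$ is disconnected for $d \ll 0$ because both Quot-scheme summands are nonempty there, and since disconnectedness is preserved under the isomorphism (a twist is an isomorphism of schemes), the same holds for $\IQo_{3, 2d+1}(V)$. Concretely, both $\Quoto_{3, d-1}(W)$ and $\Quoto_{1, d-1-\frac{\ell}{2}}(W)$ have positive expected dimension and are nonempty for sufficiently negative $d$, so the disjoint union genuinely has two components.

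I do not anticipate a serious obstacle here; the corollary is essentially a formal consequence of Theorem \ref{rk6IsotSubbs} together with the twisting principle. The only point requiring mild care is the degree arithmetic — confirming that the rank-three twist shifts degree by $3$ and that $2d+1$ minus $3$ lands on the even value $2(d-1)$, so that one invokes part (a) rather than part (b) of the theorem. One should also note that the twisting isomorphism is genuinely canonical (independent of choices), which follows because tensoring a family of subbundles by the fixed line bundle $\Oc(x)$ is a functorial operation on Quot schemes.
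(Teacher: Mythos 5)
Your proposal is correct and matches the paper's own proof essentially verbatim: the paper likewise observes that tensoring by $\Oc(x)$ gives a canonical isomorphism $\IQo_{3, 2d-2}(V) \isom \IQo_{3, 2d+1}(V \otimes \Oc(x))$ and then invokes Theorem \ref{rk6IsotSubbs}(a). Your degree bookkeeping (shift by $3$ for a rank three subbundle, landing on the even value $2(d-1)$) and the transport of disconnectedness are exactly the intended argument, just spelled out in more detail than the paper's two-line proof.
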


\begin{proof} For any rank six $L$-valued orthogonal $V$, tensor product by $\Oc(x)$ defines a canonical isomorphism $\IQo_{3, 2d-2} (V) \isom \IQo_{3, 2d + 1} ( V \otimes \Oc (x) )$. Thus the statement follows from Theorem \ref{rk6IsotSubbs} (a). \end{proof}

\begin{remark} The fact that the isotropic Quot schemes of $W$ may have two connected components reflects that fact that the isotropic Grassmann bundle $\OG ( 3, \wedge^2 W )$ has two components. This is studied in more generality in \cite{CCH3}. Further detail for rank six is given in Remark \ref{LinkIsotSubbsFiveSix}. \end{remark}


Let us now consider isotropic subbundles of rank two or one. For brevity, we shall suppose throughout that $V$ is of the form $\wedge^2 W$; the case $V = \wedge^2 W \otimes \Oc (x)$ is left to the reader.

Let $F \subset V$ be a rank two isotropic subbundle. This corresponds to a $\PP^1$-subbundle of the Grassmannian bundle $\Gr(2, W) \to C$. By \cite[pp.\ 756--757]{GH}, any $\PP^1$ lying inside $\Gr(2, 4)$ is of the form 
\[
\{ \Lambda \in \Gr (2, 4) \: : \: p \subset \PP \Lambda \subset h \}
\]
for some fixed point $p$ and hyperplane $h$ in $\PP^3$. Using local triviality as above, there exists a flag of subbundles $N \subset H \subset W$ with $\rank ( N ) = 1$ and $\rank (H) = 3$ such that for each $x \in C$ we have
\[
\PP F|_x \: = \: \{ \Lambda \in \Gr(2, W|_x) \: : \: N|_x \subset \Lambda \subset H|_x \}. 
\]
It follows that 
\[
F \: = \: \{ v \wedge w \: : \: v \in N, \ w \in H \} \ = \ \tau_N (N \otimes H), 
\]
where $\tau_N$ is the map defined in (\ref{TauN}).  Hence $F \cong (N \otimes H) / (N \otimes N)$ and $\deg (F) = \deg (H) + \deg (N)$. As a consequence, we obtain:

\begin{theorem} \label{rk6rk2} For any rank four bundle $W$, there is an identification 
\[
\IQo_{2,d} ( \wedge^2 W ) \cong \bigcup_{d_1+d_2=d} \cQ uot^\circ_{1,d_1} (\cH) ,
\]
where $\pi \colon \cQ uot_{1,d_1}^\circ (\cH) \to \Quoto_{3,d_2} (W)$ is the relative Quot scheme with $\pi^{-1}(H) = \Quot_{1,d_1}^\circ(H)$ at $H \in \Quoto_{3,d_2} (W)$, which parameterizes the flags $N \subset H \subset W$ given as above. \end{theorem}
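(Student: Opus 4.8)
\emph{Proof proposal.} The plan is to promote the fiberwise description obtained in the discussion preceding the theorem into a functorial bijection, and hence an isomorphism of schemes, between $\IQo_{2,d}(\wedge^2 W)$ and the relative Quot scheme of flags. For the forward direction, a point of $\cQ uot^\circ_{1,d_1}(\cH)$ lying over $H \in \Quoto_{3,d_2}(W)$, with $d_1 + d_2 = d$, is precisely a flag $N \subset H \subset W$ with $\rank N = 1$, $\rank H = 3$, $\deg N = d_1$ and $\deg H = d_2$. To it I associate $F := \tau_N(N \otimes H) \cong (N \otimes H)/(N \otimes N)$, a rank two isotropic subbundle of $\wedge^2 W$ of degree $\deg N + \deg H = d$, exactly as computed above. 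Performing this construction over the universal flag on $\cQ uot^\circ_{1,d_1}(\cH)$ yields a family of rank two isotropic subbundles, hence a morphism into $\IQo_{2,d}(\wedge^2 W)$.

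For the inverse, let $F \in \IQo_{2,d}(\wedge^2 W)$ be arbitrary. The classification of lines in $\Gr(2,4)$ recalled before the theorem shows that at each $x \in C$ we have $F|_x = v_0(x) \wedge H|_x$ for a flag $N|_x \subset H|_x \subset W|_x$; the task is to recover $N$ and $H$ algebraically, and thereby in families. I would exhibit them as kernels of natural bundle maps. First, set $N := \Ker \beta$ for
\[ \beta \colon W \to \Hom(F, \wedge^3 W), \quad u \mapsto (\omega \mapsto u \wedge \omega). \]
Fiberwise $\Ker\beta|_x$ consists of those $u$ with $u \wedge v_0(x) \wedge w = 0$ for all $w \in H|_x$; since $\rank H = 3$ this forces $u \in \langle v_0(x)\rangle$, so $\beta$ has constant rank three and $N$ is a line subbundle. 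Next, set $H := \Ker\gamma$ for
\[ \gamma \colon W \to \Hom(N, (\wedge^2 W)/F), \quad u \mapsto (v \mapsto [v \wedge u]). \]
Here $u \in \Ker\gamma|_x$ if and only if $v_0(x) \wedge u \in F|_x$, which holds exactly when $u \in H|_x$; thus $\gamma$ has constant rank and $H$ is a rank three subbundle with $N \subset H \subset W$. As $\beta$ and $\gamma$ are built from natural multilinear operations, their formation commutes with base change, giving a morphism $\IQo_{2,d}(\wedge^2 W) \to \bigcup_{d_1 + d_2 = d} \cQ uot^\circ_{1,d_1}(\cH)$.

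It then remains to verify that the two morphisms are mutually inverse. Applying $\beta$ and $\gamma$ to $F = \tau_N(N \otimes H)$ returns the original $N$ and $H$ by the fiberwise computations above, and conversely $\tau_N(N \otimes H)$ formed from the recovered flag reproduces $F$ since $v_0 \wedge H|_x = F|_x$ at every point; here one uses that saturated subbundles are determined by their fibers on a dense open subset of $C$. Matching degrees distributes $\IQo_{2,d}$ over the pieces indexed by $d_1 + d_2 = d$, and since the recovered $N$, $H$ have degrees $(d_1,d_2)$ these pieces are disjoint.

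I expect the principal obstacle to be the family/functorial step: showing that the pointwise flag data assemble into honest subbundles $N \subset H$ over $C$ and vary algebraically over the base of the Quot scheme. The kernel descriptions are designed precisely to settle this, the crucial input being that $\beta$ and $\gamma$ have locally constant rank. This constancy is guaranteed by the uniform fiberwise structure $F|_x = v_0(x) \wedge H|_x$ coming from the classification of lines in $\Gr(2,4)$, which ensures that the kernels are subbundles and that their formation is compatible with base change, yielding the desired isomorphism of schemes rather than a mere bijection on points.
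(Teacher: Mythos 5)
Your proposal is correct, and its skeleton coincides with the paper's: both identify a rank two isotropic subbundle $F \subset \wedge^2 W$ with a $\PP^1$-subbundle of $\Gr(2,W)$, invoke the Griffiths--Harris classification of lines in $\Gr(2,4)$ to get a pointwise flag $N|_x \subset H|_x \subset W|_x$ with $F|_x = \tau_N(N \otimes H)|_x$, and go back via $(N,H) \mapsto \tau_N(N\otimes H)$, matching degrees $d = d_1 + d_2$. Where you genuinely diverge is in how the pointwise flags are promoted to subbundles: the paper simply says ``using local triviality, there exists a flag of subbundles $N \subset H \subset W$'' and states the identification as a consequence, whereas you recover $N = \Ker\bigl(W \to \Hom(F, \wedge^3 W)\bigr)$ and $H = \Ker\bigl(W \to \Hom(N, (\wedge^2 W)/F)\bigr)$ as kernels of globally defined bundle maps of constant rank (your fiberwise rank computations are correct: a $u \notin \langle v_0(x)\rangle$ would force $H|_x \subseteq \mathrm{span}(u, v_0(x))$, impossible for $\rank H = 3$; and $v_0(x) \wedge u \in F|_x$ forces $u \in H|_x$ since $N|_x \subset H|_x$). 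This buys something real: your kernels are natural, hence compatible with base change, so the bijection upgrades to a morphism of families in both directions and yields the scheme-theoretic identification with the relative Quot scheme $\cQ uot^\circ_{1,d_1}(\cH)$ -- precisely the functorial content that the paper's appeal to local triviality leaves implicit. The cost is negligible; if anything, your version is the argument one would want when the statement is read as an isomorphism of schemes rather than a bijection of points.
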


\begin{remark} Notice that the rank two isotropic subbundle $\tau_N ( N \otimes H )$ is exactly the intersection of the rank three isotropic subbundles $\tau_N ( N \otimes W )$ and $\wedge^2 H$, which belong to opposite components of $\OG ( V )$. Compare with \cite[Proposition, p.\ 735]{GH}. We discuss another aspect of this in Remark \ref{LinkIsotSubbsFiveSix}. \end{remark}

Lastly: By Theorem \ref{rk6Envelop}, an isotropic line subbundle of $\wedge^2 W$ is simply a section of $\Gr(2, W) \to C$. But this is equivalent to a choice of rank two subbundle $E \subset W$, and then $N = \wedge^2 E$. Thus we have

\begin{theorem} \label{rk6rk1} Let $W$ be any rank four bundle and $\wedge^2 W$ the associated orthogonal bundle. Then $\IQo_{1,d} (\wedge^2 W) \cong \Quoto_{2,d}(W)$. \end{theorem}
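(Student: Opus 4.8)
The plan is to realize both sides as the scheme of sections of the relative Grassmannian bundle $\Gr(2,W) \to C$, and then to read off that the correspondence is $E \mapsto \wedge^2 E$ with matching degree. First I would invoke Proposition \ref{rk6Envelop}: the relative Pl\"ucker map identifies $\Gr(2,W)$ with the quadric fibration $Q_V \subset \PP V$ of isotropic lines, where $V = \wedge^2 W$. Since $Q_V$ is by definition the locus of isotropic lines (equivalently by Lemma \ref{SimpleIsotropyCrit} in the rank one case), an isotropic line subbundle $N \subset V$ is exactly a section of $Q_V \to C$, hence of $\Gr(2,W) \to C$. By the universal property of the Grassmannian bundle, such a section is the same datum as a rank two subbundle $E \subset W$, and the line it cuts out in $\wedge^2 W$ is its Pl\"ucker point $\wedge^2 E = \det E$. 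Isotropy is automatic since $(\wedge^2 E) \wedge (\wedge^2 E) \subset \wedge^4 E = 0$, and the degrees match because $\deg(\wedge^2 E) = \deg(\det E) = \deg E$. This already gives a bijection between rank two subbundles of $W$ of degree $d$ and isotropic line subbundles of $\wedge^2 W$ of degree $d$.

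To promote this to an isomorphism of schemes, I would argue on the level of functors of points. For a $\C$-scheme $S$, write $W_S$ for the pullback of $W$ to $C \times S$. A point of $\Quoto_{2,d}(W)(S)$ is a rank two subbundle $\mathcal{E} \subset W_S$, flat over $S$, of fiberwise degree $d$; the assignment $\mathcal{E} \mapsto \wedge^2 \mathcal{E} \subset \wedge^2 W_S$ produces a line subbundle which is fiberwise isotropic and of degree $d$, and is plainly natural in $S$. This defines a morphism $\Quoto_{2,d}(W) \to \IQo_{1,d}(\wedge^2 W)$. For the inverse, given a family $\cN \subset \wedge^2 W_S$ of isotropic (hence fiberwise decomposable) line subbundles, I would recover $\mathcal{E}$ by the Pl\"ucker rule: set
\[ \mathcal{E} \ := \ \Ker \left( W_S \otimes \cN \ \to \ \wedge^3 W_S , \quad w \otimes \omega \mapsto w \wedge \omega \right) \otimes \cN^{-1} . \]
Over each point the decomposable bivector $\omega = e_1 \wedge e_2$ satisfies $w \wedge \omega = 0$ if and only if $w \in \langle e_1, e_2 \rangle$, so $\mathcal{E}$ is a rank two subbundle of $W_S$ with $\wedge^2 \mathcal{E} = \cN$. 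These two natural transformations are mutually inverse, whence the claimed isomorphism.

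The point-counting and the degree bookkeeping are immediate; the only place requiring care is the scheme-theoretic inverse, namely checking that the Pl\"ucker kernel above is locally free of rank two and commutes with base change. This however is exactly the content of the universal property of the relative Grassmannian $\Gr(2,W)$ together with the fact, from Proposition \ref{rk6Envelop}, that the fibers of an isotropic $\cN$ are decomposable; so I expect no genuine difficulty beyond this verification.
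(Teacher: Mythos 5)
Your proposal is correct and follows essentially the same route as the paper: Proposition \ref{rk6Envelop} identifies isotropic line subbundles of $\wedge^2 W$ with sections of $Q_V = \Gr(2,W) \to C$, which in turn correspond to rank two subbundles $E \subset W$ via $N = \wedge^2 E$. Your functor-of-points elaboration (including the Pl\"ucker kernel inverse) simply makes explicit the scheme-theoretic identification that the paper leaves implicit.
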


\subsection{Enumeration of maximal isotropic subbundles} 

We now apply the previous results to the problem of enumerating isotropic subbundles of maximal degree of a general rank six $L$-valued orthogonal bundle $V$, when there are finitely many of these.  For ranks one and two, for brevity we restrict to the case $V \cong \wedge^2 W$ with $\det ( W ) \cong \Oc$.

\begin{theorem} \label{rk6enum} Let $V$ be a rank six $\Oc$-valued orthogonal bundle which is general in its component of moduli.
\begin{enumerate}
\item[(a)] Suppose $g \equiv 1 \mod 4$ and $w_2 ( V ) = 0$. Then $V$ has $2 \cdot 4^g$ rank three isotropic subbundles of maximal degree $-\frac{3}{2} ( g-1 )$.
\item[(b)] Suppose $g \equiv 3 \mod 4$ and $w_2 ( V ) = 1$. Then $V$ has $2 \cdot 4^g$ rank three isotropic subbundles of maximal degree $-\frac{3}{2} ( g-1 )$.
\item[(c)] Suppose that $g \equiv 1 \mod 12$ and $w_2 (V) = 0$. Then $V$ has $12^g$ rank two isotropic subbundles of maximal degree $-\frac{5}{3}(g-1)$.
\item[(d)] Suppose $w_2 (V) = 0$. Then $V$ has
\[
 2^{3g-1}   - (-1)^{g} \cdot 2^{2g-1} 
\]
 isotropic line subbundles of maximal degree $1-g$.
\end{enumerate}
\end{theorem}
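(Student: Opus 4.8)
The plan is to reduce each enumeration to a count of maximal subbundles of an associated rank four bundle $W$ via the isomorphisms already established, and then invoke Theorem \ref{MaxLineSubbs}. Throughout, by Theorem \ref{rk6moduli}(a) a general $V \in \MO^0(6,\Oc)$ is of the form $\wedge^2 W$ for a general stable $W \in \SU(4,\Oc)$, so I may assume $W$ is general. The key observation is that the maximal-degree isotropic subbundles of $V$ correspond, under the isomorphisms of Theorems \ref{rk6IsotSubbs}, \ref{rk6rk2} and \ref{rk6rk1}, to maximal-degree subbundles (of the appropriate rank) of $W$, and these are counted by the genus-dependent formulas in \cite{Hol}.

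For part (a), I would use Theorem \ref{rk6IsotSubbs}(a): since $\ell = 0$ is even, $\IQo_{3,2d}(V) \cong \Quoto_{3,d}(W) \sqcup \Quoto_{1,d}(W)$. The rank three isotropic subbundles thus come in two families, corresponding to rank three and rank one subbundles of $W$ respectively. First I would identify the maximal value of $2d$ for which $\IQo_{3,2d}(V)$ is finite; by Lemma \ref{zerodim} this is $e = -\tfrac{3}{2}(g-1)$, requiring $g$ odd and forcing the congruence $g \equiv 1 \bmod 4$ so that the relevant degrees are integers and $\chi \equiv 1-g \bmod 4$ in Theorem \ref{MaxLineSubbs} is satisfied. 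Then I would count: maximal rank three subbundles of $W$ are the same as maximal line subbundles of $W^*$ (a general stable rank four bundle), giving $4^g$ by Theorem \ref{MaxLineSubbs}; and maximal line subbundles of $W$ give another $4^g$. Summing yields $2 \cdot 4^g$. Part (b) follows identically after reducing the odd-$w_2$ case to $\wedge^2 W_1 \otimes \Oc(x)$ via Theorem \ref{rk6Structure}(b) and Corollary \ref{rk6IsotSubbsCor}, tracking the parity shift that replaces $g \equiv 1$ by $g \equiv 3 \bmod 4$.

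For part (c), I would use Theorem \ref{rk6rk2}: rank two isotropic subbundles correspond to flags $N \subset H \subset W$, parameterized by a relative Quot scheme fibered over $\Quoto_{3,d_2}(W)$ with fiber $\Quoto_{1,d_1}(H)$. At the maximal degree the count becomes a product of two counts of maximal subbundles, and the total exponent combines to give $12^g$; here the congruence $g \equiv 1 \bmod 12$ is exactly the divisibility condition making both the base $\Quoto_{3,d_2}(W)$ (counted as $4^g$, since rank three subbundles of $W$ dualize to line subbundles) and the fiber $\Quoto_{1,d_1}(H)$ for a general rank three $H$ (counted as $3^g$) simultaneously finite and governed by Theorem \ref{MaxLineSubbs}. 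Multiplying, $4^g \cdot 3^g = 12^g$. For part (d), Theorem \ref{rk6rk1} gives $\IQo_{1,d}(\wedge^2 W) \cong \Quoto_{2,d}(W)$, so the isotropic line subbundles of maximal degree are counted by the number of maximal rank two subbundles of a general rank four bundle $W$; the closed formula $2^{3g-1} - (-1)^g \cdot 2^{2g-1}$ is the known enumeration of these (for instance from the theory developed in \cite{Hol} and its refinements), so I would simply cite the relevant count.

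The main obstacle I anticipate is part (c): the product count over a relative Quot scheme requires verifying that for \emph{general} $W$, the rank three subbundles $H$ realizing the maximal base degree are themselves general enough in $\SU(3,\det H)$ for Theorem \ref{MaxLineSubbs} to apply to their line subbundles, and that the two extremal conditions (maximal $d_2$ in the base and maximal $d_1$ in the fiber) are jointly attainable with the correct integrality — this is what pins down the congruence $g \equiv 1 \bmod 12$. Establishing this genericity of the flag data, rather than the arithmetic of the Euler characteristics, is the delicate point; the degree bookkeeping and the congruence conditions in (a), (b), (d) are by comparison routine applications of Lemma \ref{zerodim} and Theorem \ref{MaxLineSubbs}.
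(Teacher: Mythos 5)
Your proposal is correct and follows essentially the same route as the paper's proof: parts (a) and (b) via Theorem \ref{rk6moduli}, Theorem \ref{rk6IsotSubbs} and Corollary \ref{rk6IsotSubbsCor}, dualizing rank three subbundles of $W$ to line subbundles of $W^*$ and applying Theorem \ref{MaxLineSubbs}; part (c) via the flag description of Theorem \ref{rk6rk2} with the product count $4^g \cdot 3^g$; and part (d) via Theorem \ref{rk6rk1} together with the formulas of \cite{Hol}. The one point you flag as delicate in (c) --- that the maximal rank three subbundles $H \subset W$ must themselves be general in moduli before Theorem \ref{MaxLineSubbs} can be applied to count their line subbundles --- is exactly what the paper settles with a single citation of \cite[Lemma 2.1]{LNew}, which asserts precisely this genericity.
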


\begin{proof} (a) By Theorem \ref{rk6moduli} (a), we have $V \cong \wedge^2 W$ for some $W \in \SU ( 4, \Oc )$, which may assumed to be general in moduli. Note that $W^*$ may also be assumed to be general in moduli, and that $\Quot_{3, d} ( W ) \cong \Quot_{1, d} ( W^* )$. Thus, as $\deg (W) \equiv 1 - g \mod 4$, by Theorem \ref{MaxLineSubbs} the bundles $W$ and $W^*$ both have $4^g$ maximal line subbundles of degree $\frac{3}{4}(g-1)$. By Theorem \ref{rk6IsotSubbs} (a), the bundle $V$ contains $2 \cdot 4^g$ rank three isotropic subbundles of maximal degree $-\frac{3}{2} (g-1)$.

(b) By Theorem \ref{rk6moduli} (b), we have $V \cong \wedge^2 W_1 \otimes \Oc (x)$ for some $W_1 \in \SU ( 4, \Oc ( -2x ))$, which may be assumed to be general in moduli. As $-2 \equiv 1-g \mod 4$, by Theorem \ref{MaxLineSubbs} both the bundles $W_1$ and $W_1^*$ have $4^g$ maximal line subbundles of degree $d$ and $d+1$ respectively, for $d = \frac{1}{4} ( 1 - 3g)$. By Corollary \ref{rk6IsotSubbsCor}, we have
\[ \IQo_{3, 2d+1} ( V ) \ \cong \ \Quoto_{3, d-1} ( W_1 ) \sqcup \Quoto_{1, d} ( W_1 ) . \]
Since $\Quoto_{3, d-1} ( W_1 ) \cong \Quoto_{1, d + 1} ( W_1^* )$, we get the desired number.

(c) Here $V = \wedge^2 W$ is as in part (a). By hypothesis, in particular $g \equiv 1 \mod 4$, so a general $W \in \SU (4, \cO_C)$ has $4^g$ maximal rank three subbundles of degree $-\frac{3}{4}(g-1)$. By \cite[Lemma 2.1]{LNew}, these maximal subbundles are general in moduli. As by hypothesis we also have $g \equiv 1 \mod 3$, each maximal rank three $H \subset W$ has $3^g$ maximal line subbundles, each of degree $-\frac{11}{12} (g-1)$. Thus there are $4^g \cdot 3^g = 12^g$ flags $N \subset H \subset W$ where both $N \subset H$ and $H \subset W$ are maximal subbundles. By Theorem \ref{rk6rk2}, then, $V$ has $12^g$ isotropic subbundles of rank two and maximal degree 
\[
- \frac{3}{4} ( g - 1 ) - \frac{11}{12} ( g - 1 ) \ = \ - \frac{5}{3} ( g - 1 ) .
\]

(d) Using the formulas in \cite[{\S} 4]{Hol}, one finds that a general bundle of rank four and degree zero has a finite number of rank two subbundles of maximal degree $1-g$, the number being given as follows:
\begin{equation}  2^{4g - 3} \cdot \sum_{z \in \{ -1, \pm \sqrt{-1} \}} \left( z ( 1 - z )^2 \right)^{1-g} \ = \  2^{3g-1}   + (-1)^{g-1} \cdot 2^{2g-1}  . \end{equation} 

The statement now follows from Theorem \ref{rk6rk1}.
\end{proof}

\begin{remark} By the Hirschowitz bound \cite{Hir}, every rank six vector bundle of degree zero has a rank two subbundle of degree at least $-\frac{4}{3}(g-1)$. This means that a general $V \in \MO^0 (6, \cO_C)$ does have rank two subbundles of degree greater than $-\frac{5}{3}(g-1)$, all of which are nonisotropic. \end{remark}

\begin{theorem} Let $V$ be a rank six $\Oc(x)$-valued orthogonal bundle of determinant $\Oc (3x)$ which is general in moduli. Suppose $g$ is even. Then $V$ has $4^g$ isotropic rank three subbundles of maximal degree $3 - \frac{3}{2}g$. \end{theorem}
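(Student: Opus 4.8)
The plan is to reduce to the structure theorem $V \cong \wedge^2 W$ and then transport the enumeration to ordinary Quot schemes of $W$ via Theorem \ref{rk6IsotSubbs}. First I would observe that here $L = \Oc(x)$, so $\ell = \deg L = 1$ is odd, and $\det V \cong \Oc(3x) \cong L^3$; by Proposition \ref{DetIrns} the bundle $V$ therefore admits a rank three isotropic subbundle. Since the morphism $\Phi_6$ of Theorem \ref{rk6moduli}(c) is finite and surjective, a general $V \in \MO(6, \Oc(x))$ is of the form $\wedge^2 W$ for some $W \in \SU(4, \Oc(x))$ which is general in moduli; note that $\deg W = 1$, and that $W^*$ is then general in $\SU(4, \Oc(-x))$ with $\deg W^* = -1$.

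Next I would pin down the maximal degree. By (\ref{Inle}) with $r = 6$, $n = 3$ and $\ell = 1$, the expected dimension of $\IQo_{3,e}(V)$ is $-2e - 3(g-2)$, which vanishes precisely when $e = 3 - \tfrac{3}{2}g$ (an integer since $g$ is even) and is negative for larger $e$. Since $\ell = 1$ is odd, Theorem \ref{rk6IsotSubbs}(b) identifies the even-degree isotropic rank three subbundles of $V$ with rank three subbundles of $W$ (via $E \mapsto \wedge^2 E$) and the odd-degree ones with line subbundles of $W$ (via $N \mapsto \tau_N(N \otimes W)$); through these identifications with Quot schemes of the general bundle $W$ there are no isotropic rank three subbundles of degree exceeding $3 - \tfrac{3}{2}g$, so this is the maximal degree. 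The parity of $3 - \tfrac{3}{2}g$ depends on $g \bmod 4$, and I would treat the two cases separately, exactly one of the two families contributing in each.

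Then I would count using Theorem \ref{MaxLineSubbs}. If $g \equiv 2 \pmod 4$, then $e = 3 - \tfrac{3}{2}g$ is even, and the maximal isotropic subbundles correspond to rank three subbundles $E \subset W$ of degree $\tfrac14(6-3g)$; dualizing gives $\Quoto_{3, (6-3g)/4}(W) \cong \Quoto_{1, (2-3g)/4}(W^*)$, and since $\deg W^* = -1 \equiv 1 - g \pmod 4$, Theorem \ref{MaxLineSubbs} yields $4^g$ maximal line subbundles of $W^*$. If instead $g \equiv 0 \pmod 4$, then $e$ is odd and the subbundles correspond to line subbundles $N \subset W$ of degree $1 - \tfrac34 g$; as $\deg W = 1 \equiv 1 - g \pmod 4$, Theorem \ref{MaxLineSubbs} again gives $4^g$. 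In both cases the isomorphisms of Theorem \ref{rk6IsotSubbs}(b) transport the count verbatim, so $V$ has exactly $4^g$ isotropic rank three subbundles of maximal degree $3 - \tfrac{3}{2}g$.

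The main obstacle I anticipate is not the counting, which is immediate once the dictionary is in place, but the bookkeeping that makes the dictionary correct: confirming that the maximal-degree locus is zero-dimensional and nonempty, so that Theorem \ref{MaxLineSubbs} applies rather than returning a positive-dimensional or empty scheme, and checking in each parity class that the divisibility and congruence conditions align so that precisely one of the two families in Theorem \ref{rk6IsotSubbs}(b) is nonempty at degree $3 - \tfrac{3}{2}g$. The clean outcome, that both the even and odd families land on the same critical degree and the congruence $\deg \equiv 1 - g \pmod 4$ is satisfied by $W$ or $W^*$ exactly according to $g \bmod 4$, is what I would verify most carefully.
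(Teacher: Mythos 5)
Your proposal is correct and follows essentially the same route as the paper's proof: reduce via $\Phi_6$ to $V \cong \wedge^2 W$ with $W \in \SU(4,\Oc(x))$ general, split into the cases $g \equiv 0$ and $g \equiv 2 \pmod 4$, identify $\IQo_{3,e}(V)$ at the critical degree with $\Quoto_{1,d-1}(W)$ or $\Quoto_{1,d-1}(W^*)$ via Theorem \ref{rk6IsotSubbs}(b), and apply Theorem \ref{MaxLineSubbs} in each case; your degree and congruence bookkeeping all checks out. The only difference is cosmetic: you derive the critical degree $3 - \tfrac{3}{2}g$ directly from (\ref{Inle}) and argue maximality through the dictionary, whereas the paper relies on Lemma \ref{zerodim} for this and otherwise leaves maximality implicit.
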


\begin{proof} By Theorem \ref{rk6moduli} (c), we have $V \cong \wedge^2 W_2$ for some $W_2 \in \SU ( 4, \Oc (x))$, which may be assumed to be general in moduli. There are two cases.

If $g \equiv 0 \mod 4$, put  $ d := 2 - \frac{3}{4} g$, so that $ 2d - 1 = 3 - \frac{3}{2}g  $. Then by Theorem \ref{rk6IsotSubbs} (b), we have $\IQo_{3, 2d-1} (V) \cong \Quoto_{1, d-1} (W_2)$.

If $g \equiv 2 \mod 4$, put $d := \frac{3}{2} - \frac{3}{4}g$, so that $2d = 3 - \frac{3}{2}g$. Then by Theorem \ref{rk6IsotSubbs} (b), we have $\IQo_{3, 2d} ( V ) \cong \Quoto_{3 , d} (W_2) \cong \Quoto_{1, d-1} ( W_2^* )$.

Applying Theorem \ref{MaxLineSubbs} to $\Quoto_{1, d-1} ( W_2 )$ and $\Quoto_{1 , d-1} (W_2^*)$, we get the desired number. \end{proof}

\section{Rank five}

Let $L$ be a line bundle, and $V$ an $L$-valued orthogonal bundle of rank five. By (\ref{DetOddRank}), necessarily $L$ has even degree $2m$, and $\det W \cong M^5$ for some $M$ satisfying $M^2 \cong L$. We shall use the results in {\S} \ref{rk6} to analyze the structure of $V$, in particular the relation with rank four symplectic bundles.

\subsection{Structure of rank five orthogonal bundles}

Let $M$ be a line bundle and $W$ a rank four bundle with a symplectic form $\alpha \colon \wedge^2 W \to M$. Then $\deg ( W ) = 2 \cdot \deg (M)$, and $S_W := \Ker \left( \alpha \colon \wedge^2 W \to M \right)$ is a rank five subbundle of determinant $M^5$. Note that $S_W$ depends on the choice of $\alpha$, but if for example $W$ is stable then $\alpha$ is unique up to nonzero scalar multiple.

\begin{proposition} \label{wedge2W} The bundle $S_W$ is $M^2$-valued orthogonal, and there is an orthogonal direct sum decomposition $\wedge^2 W = S_W \perp M$. \end{proposition}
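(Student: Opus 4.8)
The plan is to realize the summand $M$ explicitly as the line subbundle of $\wedge^2 W$ spanned by the bivector dual to the symplectic form $\alpha$, to show that its orthogonal complement for $\sigma$ is precisely $S_W = \Ker(\alpha)$, and to check that this line is nondegenerate, which immediately yields the orthogonal splitting. First I would record the twists. The natural form $\sigma(\omega_1 , \omega_2) = \omega_1 \wedge \omega_2$ on $\wedge^2 W$ is valued in $\wedge^4 W = \det W$. Since $\alpha \in H^0 ( C, \wedge^2 W^* \otimes M )$ is symplectic, the section $\alpha \wedge \alpha \in H^0 ( C, (\det W)^{-1} \otimes M^2 )$ is nowhere vanishing, forcing $\det W \cong M^2$; hence $\sigma$ is $M^2$-valued, in agreement with the claim. (As a consistency check, $\det ( \wedge^2 W ) = (\det W)^3 = M^6$, so $\det S_W = M^5$.)

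Next I would produce the distinguished subbundle. The form $\sigma$ induces a self-duality isomorphism $\wedge^2 W \isom (\wedge^2 W)^* \otimes M^2$, and composing $\alpha$ with its inverse (after the appropriate twist by $M^{-1}$) produces a bundle map $\omega_\alpha \colon M \to \wedge^2 W$, characterized by $\sigma ( \omega_\alpha , - ) = \alpha$. Concretely, in a local symplectic frame $e_1, \dots, e_4$ with $\alpha ( e_1 \wedge e_2 ) = \alpha ( e_3 \wedge e_4 ) = 1$ and $\alpha$ vanishing on the remaining basic bivectors, one has $\omega_\alpha = e_1 \wedge e_2 + e_3 \wedge e_4$.

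The heart of the argument is then a fiberwise computation in this frame, trivializing $\wedge^4 W$ by $e_1 \wedge e_2 \wedge e_3 \wedge e_4$. Wedging $\omega_\alpha$ against each basic bivector gives $\sigma ( \omega_\alpha , \beta ) = \alpha ( \beta )$ for every $\beta$, so that $\omega_\alpha^\perp = \Ker ( \alpha ) = S_W$. Moreover $\sigma ( \omega_\alpha , \omega_\alpha ) = \omega_\alpha \wedge \omega_\alpha = 2 \, e_1 \wedge e_2 \wedge e_3 \wedge e_4$ is nowhere zero, so the line subbundle $\omega_\alpha ( M ) \subset \wedge^2 W$ is nondegenerate for $\sigma$. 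It follows that $\wedge^2 W = \omega_\alpha ( M ) \perp \omega_\alpha ( M )^\perp = M \perp S_W$, and that $\sigma$ restricts to a nondegenerate $M^2$-valued symmetric form on $S_W$, exhibiting $S_W$ as an $M^2$-valued orthogonal bundle.

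I expect the genuine mathematics to be confined to the one-line linear-algebra identity above, so the main obstacle is bookkeeping: globalizing these fiberwise identities over $C$ and keeping the line-bundle twists consistent. The map $\omega_\alpha$ is a sub-line-bundle rather than merely a subsheaf precisely because $\omega_\alpha \wedge \omega_\alpha$ is nowhere vanishing, and frame-independence is automatic since $\sigma$, $\alpha$ and $\omega_\alpha$ are all defined globally before any trivialization is chosen.
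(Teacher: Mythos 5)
Your proposal is correct, but it takes a genuinely different route from the paper. The paper works directly on $S_W$: it shows that $\sigma|_{(S_W)^{\otimes 2}}$ is nondegenerate by noting that any $\eta$ in the radical satisfies $\eta \wedge \eta = 0$, hence is decomposable, say $\eta = v \wedge w$ with $v, w$ spanning an $\alpha$-isotropic plane $\Lambda$ (isotropy coming from $\alpha(\eta) = 0$); choosing $v', w'$ spanning a complementary $\alpha$-isotropic plane gives $v' \wedge w' \in S_W$ with $\eta \wedge v' \wedge w' \neq 0$, a contradiction. The splitting is then obtained abstractly: nondegeneracy gives $S_W \cap (S_W)^\perp = 0$, and a determinant comparison identifies $(S_W)^\perp \cong M$. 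You instead construct the complementary line \emph{first}, as the image of the bundle map $\omega_\alpha \colon M \to \wedge^2 W$ dual to $\alpha$ under the self-duality of $\wedge^2 W$ induced by $\sigma$, characterized by $\sigma(\omega_\alpha, -) = \alpha$; the identity $\omega_\alpha^\perp = \Ker(\alpha) = S_W$ is then immediate, the nondegeneracy check reduces to the one-line frame computation $\omega_\alpha \wedge \omega_\alpha = 2\, e_1 \wedge e_2 \wedge e_3 \wedge e_4 \neq 0$, and nondegeneracy of $\sigma|_{S_W}$ falls out of the splitting rather than being proven first. Each approach buys something: yours is more explicit, exhibiting the orthogonal complement of $S_W$ concretely as the line spanned by the symplectic bivector (the paper identifies it only up to isomorphism), and its nondegeneracy verification is simpler; the paper's argument is frame-free and runs on the geometry of decomposable bivectors and isotropic planes, which is exactly the geometry reused later in Proposition \ref{rk6Envelop} and {\S} \ref{rk6QV}, so it integrates more tightly with the rest of the rank five and six analysis. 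Your bookkeeping concerns are also handled correctly: $\omega_\alpha(M)$ is a genuine line subbundle because $\sigma(\omega_\alpha, \omega_\alpha)$ is nowhere vanishing, and the characterization of $\omega_\alpha$ is global, so no frame-dependence issues arise.
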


\begin{proof} As $W$ is $M$-valued symplectic, $\det W \cong M^2$. Thus $S_W$ inherits the symmetric bilinear form $\sigma \colon ( \wedge^2 W )^{\otimes 2} \to M^2$ given by the determinant, as discussed in the previous section. Let us show that $\sigma|_{\left( S_W \right)^{\otimes 2}}$ is nondegenerate. Suppose $\eta \in S_W|_p$ is such that $\sigma ( \eta \wedge - )$ is the zero functional on $S_W|_p$. Then in particular $\eta \wedge \eta = 0$, so $\eta$ is decomposable. Furthermore, as
\[ \eta \ \in \ \Ker \left( \alpha|_p \colon \wedge^2 W|_p \ \to \ M|_p \right) , \]
if $\eta$ is nonzero then $\eta = v \wedge w$ where $v$ and $w$ span a subspace $\Lambda \subset W|_p$ isotropic with respect to $\alpha$. But then for any $v', w' \in W|_p$ spanning an $\alpha$-isotropic subspace complementary to $\Lambda$, we have $v' \wedge w' \in S_W|_p$ and $v \wedge w \wedge v' \wedge w' \neq 0$ in $M^2|_p$. Thus $\eta$ must be zero, and $\sigma|_{\left( S_W \right)^{\otimes 2}}$ is nondegenerate. Hence $S_W$ is $M^2$-valued orthogonal.

For the rest: By the nondegeneracy, $S_W \cap ( S_W )^\perp = 0$ in $\wedge^2 W$. Comparing determinants, we see that $(S_W)^\perp \cong M$. Therefore, $\wedge^2 W = S_W \perp M$ as desired. \end{proof}

Again, we get morphisms of moduli spaces from this observation. For any line bundle $M$, we denote by $\MS ( 4 , M )$ the moduli space of semistable $M$-valued symplectic bundles of rank $4$ over $C$. This is a closed irreducible subvariety of $\SU ( 2n , M^n )$ of dimension $10(g-1)$.

\begin{theorem} \label{rk5moduli} Let $x$ be any point of $C$.
\begin{enumerate}
\item[(a)] The association $W \mapsto S_W$ defines a finite surjective morphism
\[ \Theta_5^0 \colon \MS ( 4, \Oc ) \ \to \ \MO^0 ( 5, \Oc ) . \]
\item[(b)] The association $W_1 \mapsto S_{W_1} \otimes \Oc (x)$ defines a finite surjective morphism
\[ \Theta_5^1 \colon \MS ( 4, \Oc (-x) ) \to \ \MO^1 ( 5, \Oc ) . \]
\end{enumerate}
\end{theorem}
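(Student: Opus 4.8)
The plan is to follow the template established in the proofs of Theorems \ref{rk3moduli}, \ref{rk4moduli} and \ref{rk6moduli}, with the symplectic moduli space $\MS(4, \Oc)$ now playing the role previously taken by $\SU(2, \Oc)$ or $\SU(4, \Oc)$. The crucial new input is the orthogonal splitting $\wedge^2 W = S_W \perp M$ of Proposition \ref{wedge2W}, which lets us transfer information from the rank six situation of {\S}\ref{rk6}. I shall concentrate on part (a); part (b) is entirely analogous after twisting by $\Oc(x)$.

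First I would check that $S_W$ is semistable and lies in the correct component. Since $\C$ has characteristic zero, $\wedge^2 W$ is semistable whenever $W$ is, and as $\wedge^2 W = S_W \perp \Oc$ is a direct sum of bundles of slope zero, the summand $S_W$ is semistable as well. To locate the component, I would use that $\wedge^2 W \in \MO^0(6, \Oc)$ by Theorem \ref{rk6moduli}~(a), together with the identity $w_2(U) = w_2(U \perp \Oc)$ established inside the proof of Theorem \ref{w2deg}; applied to $U = S_W$ this gives $w_2(S_W) = w_2(\wedge^2 W) = 0$, so $S_W \in \MO^0(5, \Oc)$. For part (b) the same computation, now using $(S_{W_1} \otimes \Oc(x)) \perp \Oc \cong \wedge^2 W_1 \otimes \Oc(x) \in \MO^1(6, \Oc)$, shows that $\Theta_5^1$ lands in $\MO^1(5, \Oc)$.

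Next I would promote $W \mapsto S_W$ to a morphism exactly as before. Compatibility with S-equivalence follows from the rank six case: if $W \sim W'$ then $\wedge^2 W \sim \wedge^2 W'$ as noted in the proof of Theorem \ref{rk6moduli}, and cancelling the common trivial summand $\Oc$ from the associated graded bundles of $\wedge^2 W = S_W \oplus \Oc$ and $\wedge^2 W' = S_{W'} \oplus \Oc$ yields $S_W \sim S_{W'}$. One then defines $\Theta_5^0$ on the stable locus via a universal family on a suitable \'etale cover and extends it across the strictly semistable locus by the Riemann extension theorem, precisely as for $\Theta_3^0$. For finiteness I would invoke that $\MS(4, \Oc)$, being the moduli space of semistable principal $\Sp(4, \C)$-bundles with $\Sp(4, \C)$ simply connected, has Picard number one (the analog for $\Sp(4,\C)$ of the fact used for $\SU(2,\Oc)$ via \cite{DN}); writing $\cL$ for the ample generator and $\cN$ for a very ample bundle on $\MO^0(5, \Oc)$, we obtain $(\Theta_5^0)^* \cN \cong \cL^a$ with $a > 0$, so the pullback is ample and $\Theta_5^0$ is finite. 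Surjectivity then follows by comparing dimensions: $\MS(4, \Oc)$ has dimension $10(g-1)$, as does every component of $\MO(5, \Oc)$ by the Lemma, so the closed irreducible image of the finite morphism $\Theta_5^0$ is a full-dimensional component of $\MO^0(5, \Oc)$; since the latter is irreducible (being the $w_2 = 0$ component of the moduli of semistable $\SO(5, \C)$-bundles), the image is all of it.

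The step I expect to demand the most care is the finiteness argument, since it rests on the Picard number of the symplectic moduli space being one, which must be supplied from the theory of moduli of principal $\Sp(4,\C)$-bundles rather than directly from \cite{DN}. A secondary technical point is the S-equivalence compatibility of $W \mapsto S_W$: although the symplectic form $\alpha$ is canonical only up to scalar for stable $W$, its kernel $S_W$ is independent of that scalar, so the only delicate issue is the behaviour across strictly semistable bundles, which is precisely what the cancellation of the $\Oc$-summand resolves. For part (b), the identical arguments apply to $\MS(4, \Oc(-x))$, again of dimension $10(g-1)$, with image the irreducible component $\MO^1(5, \Oc)$.
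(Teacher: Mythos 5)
Your proposal is correct and follows essentially the same route as the paper's proof: a Poincar\'e family with symplectic form on an \'etale cover of the stable locus, S-equivalence compatibility by cancelling the trivial summand in $\gr ( \wedge^2 W ) = \gr ( S_W ) \oplus \Oc$, finiteness from the Picard number one of $\MS ( 4, \Oc )$ (the paper cites \cite{BLS} for precisely the fact you flag as the needed input), and surjectivity by comparing dimensions. The only cosmetic difference is that you locate the target component via $w_2 ( S_W ) = w_2 ( S_W \perp \Oc ) = w_2 ( \wedge^2 W )$ and Theorem \ref{rk6moduli}, while the paper appeals to the parity of degrees of rank two isotropic subbundles through Theorems \ref{w2deg} and \ref{rk5Structure}; both arguments are valid and rest on the same underlying facts.
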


\begin{proof} Let $\tM$ be an \'etale cover of $\MS ( 4, \Oc )_\st$ admitting a Poincar\'e family $\cW \to \tM \times C$ together with a symplectic form $\tilde{\alpha} \colon \wedge^2 \cW \to \cO_{\tM \times C}$. Then  by Proposition \ref{wedge2W} we see that $S_{\cW} := \Ker ( \tilde{\alpha} )$ is a family of rank five orthogonal bundles. Moreover, since $\C$ has characteristic zero, if a bundle $W$ is stable then $\wedge^2 W$ is polystable. Hence the family $S_{\cW}$ defines a morphism $\tM \to \MO^0 ( 5, \Oc )$ which factorizes via a morphism $\Theta_5^0 \colon \MS ( 4, \Oc )_\st \to \MO^0 ( 5, \Oc )$.

Next, note that if $W$ and $W'$ are S-equivalent semistable $\Oc$-valued symplectic bundles, then
\[ \gr (S_W) \oplus \Oc \ = \ \gr ( \wedge^2 W ) \ \cong \ \gr ( \wedge^2 W' ) \ = \ \gr ( S_{W'} ) \oplus \Oc . \]
Therefore, if $W \sim W'$ then $S_W \sim S_{W'}$. Now by \cite{BLS}, the moduli space $\MS ( 4, \Oc )$ has Picard number one. Thus part (a) can be proven similarly as before.
%
The proof of (b) is similar. 

As before, the target component is determined by the parity of degree of rank two isotropic subbundles. The details are described in Theorem \ref{rk5Structure}. \end{proof}

As in previous cases, we now describe how to construct an ``inverse'' for the operation $W \mapsto S_W$, which is valid for an arbitrary $L$-valued orthogonal bundle of rank five.

\begin{theorem} \label{rk5Structure} Let $L$ be a line bundle of degree $2m$, and $V$ a rank five $L$-valued orthogonal bundle of determinant $M^5$, where $M^2 \cong L$.
\begin{enumerate}
\item[(a)] The bundle $V$ admits an isotropic subbundle $E$ of rank two.
\item[(b)] If $\deg(E) \equiv m \mod 2$, then there is an $M$-valued symplectic bundle $(W, \alpha)$ of rank four (and determinant $L$) such that $V \cong S_W$.
\item[(c)] If $\deg(E) \not\equiv m \mod 2$, then there is an $M(-x)$-valued symplectic bundle $(W_1, \alpha_1)$ of rank four (and determinant $L(-2x)$) such that $V \cong S_{W_1} \otimes \Oc(x)$.
\end{enumerate} \end{theorem}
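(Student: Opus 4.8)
The plan is to establish (a) directly by descending through isotropic line subbundles, and then to obtain (b) and (c) by passing to the orthogonal direct sum $V \perp M$, which has rank six and determinant $L^3$, so that the rank six structure theorem applies. First, for (a): since $r = 5 \ge 3$, Proposition \ref{IsotLineSubb} produces an isotropic line subbundle $H \subset V$. The quotient $H^\perp / H$ is then $L$-valued orthogonal of rank three, its induced form being nondegenerate because $(H^\perp)^\perp = H$. Applying Proposition \ref{IsotLineSubb} once more, this time to $H^\perp/H$, yields an isotropic line subbundle $\bar K \subset H^\perp/H$; its preimage $K \subset H^\perp \subset V$ is then a rank two isotropic subbundle, as one checks using $H \subset H^\perp$ and the isotropy of $H$. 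This gives the subbundle $E$ of (a).

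For (b) and (c), I would exploit that $M$ is $L$-valued orthogonal (since $M^2 \cong L$), so that $V \perp M$ is $L$-valued orthogonal of rank six with $\det ( V \perp M ) \cong M^5 \otimes M \cong L^3$. By Proposition \ref{DetIrns} it admits rank three isotropic subbundles, and I claim that all of them have degree of the same parity. To see this, twist by $M^{-1}$: setting $V_0 := V \otimes M^{-1}$, the bundle $(V \perp M) \otimes M^{-1} = V_0 \perp \Oc$ is $\Oc$-valued orthogonal of trivial determinant, so Theorem \ref{w2deg} applies and fixes the parity. The central computation is to relate this parity to $\deg E$. Writing $m = \deg M$, Theorem \ref{w2deg} gives $\deg E - 2m \equiv w_2 ( V_0 ) \mod 2$ for the rank two isotropic $E \otimes M^{-1} \subset V_0$, and $\deg \tilde E - 3m \equiv w_2 ( V_0 \perp \Oc ) \mod 2$ for any rank three isotropic $\tilde E \subset V \perp M$ (via $\tilde E \otimes M^{-1}$). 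Using the identity $w_2 ( V_0 \perp \Oc ) = w_2 ( V_0 )$ established in the odd rank case of the proof of Theorem \ref{w2deg}, these combine to $\deg \tilde E \equiv \deg E + m \mod 2$. Hence the rank three isotropic subbundles of $V \perp M$ are even exactly when $\deg E \equiv m \mod 2$.

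With the parity pinned down, I would apply Theorem \ref{rk6Structure} to $V \perp M$. When $\deg E \equiv m \mod 2$ the rank three isotropic subbundles are even, so $V \perp M \cong \wedge^2 W$ as orthogonal bundles for some rank four $W$ with $\det W \cong L$; when $\deg E \not\equiv m \mod 2$ they are odd, so $(V \perp M) \otimes \Oc(-x) \cong \wedge^2 W_1$ for some $W_1$ with $\det W_1 \cong L(-2x)$. In either case the nondegenerate orthogonal summand $M$ (respectively $M(-x)$) maps to a nowhere-isotropic line subbundle. The remaining step is to read off the symplectic form: using the canonical identification $\wedge^2 W \otimes M^{-1} \cong \wedge^2 W^* \otimes M$ (valid because $\det W \cong M^2$), the nowhere-vanishing inclusion $M \hookrightarrow \wedge^2 W$ corresponds to a nondegenerate $\alpha \in H^0 ( \wedge^2 W^* \otimes M )$, an $M$-valued symplectic form on $W$. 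Since the orthogonal form on $\wedge^2 W$ is $\beta \otimes \gamma \mapsto \beta \wedge \gamma$, one checks $\Ker \alpha = M^\perp$, which is $S_W$ by Proposition \ref{wedge2W}; and inside $V \perp M$ one has $M^\perp = V$. Therefore $V \cong S_W$, proving (b), and twisting the analogous identification $V \otimes \Oc(-x) \cong S_{W_1}$ by $\Oc(x)$ gives (c).

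The main obstacle I anticipate is the parity bookkeeping separating (b) from (c): it depends on reducing to the $\Oc$-valued, trivial-determinant setting by the twist $\otimes M^{-1}$ and on the identity $w_2 ( V_0 \perp \Oc ) = w_2 ( V_0 )$, and any sign or parity slip would send the construction into the wrong case of Theorem \ref{rk6Structure}. A secondary point needing care is to ensure that the isomorphism $V \perp M \cong \wedge^2 W$ furnished by Theorem \ref{rk6Structure} (via Proposition \ref{rk6StructureProp}) is one of orthogonal bundles, so that the summand $M$ really does land on a nowhere-isotropic subbundle and the induced $\alpha$ is genuinely nondegenerate.
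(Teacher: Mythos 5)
Your proposal is correct, and its core engine is the same as the paper's: pass to $V \perp M$, which is rank six of determinant $L^3$, apply Theorem \ref{rk6Structure} according to the parity of the rank three isotropic subbundles, and extract the $M$-valued symplectic form $\alpha$ on $W$ from the nowhere-isotropic image of $M$ in $\wedge^2 W$. Where you genuinely diverge is in part (a) and in the parity bookkeeping. The paper constructs $E$ \emph{inside} $V \perp M$: by Proposition \ref{DetIrns} there is a rank three isotropic subbundle $\overline{E} \subset V \perp M$, and $E := \overline{E} \cap V = \Ker ( \overline{E} \to M )$ is rank two isotropic; since a five-dimensional orthogonal space has no three-dimensional isotropic subspaces, the projection $\overline{E} \to M$ is fiberwise surjective, so the relation $\deg (E) + m = \deg ( \overline{E} )$ comes for free, and the dichotomy (b)/(c) translates directly into the parity hypothesis of Theorem \ref{rk6Structure}. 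You instead build $E$ by applying Proposition \ref{IsotLineSubb} twice (descending through $H^\perp / H$), and must then recover the link between $\deg (E)$ and the parity of rank three isotropics of $V \perp M$ via Theorem \ref{w2deg} and the identity $w_2 ( V_0 \perp \Oc ) = w_2 ( V_0 )$ for $V_0 = V \otimes M^{-1}$; your computation there is correct. This detour is longer, but it buys something the paper leaves implicit: it shows that \emph{every} rank two isotropic subbundle of $V$ has the same degree parity, so the case distinction in (b)/(c) is well-posed for an arbitrary $E$, not only for the one produced in (a). Finally, the orthogonality issue you flag at the end is real but is precisely the point where the paper invokes \cite[Proposition 3.1]{Gro}: a vector bundle isomorphism between two orthogonal bundles can be corrected by an automorphism so as to respect the forms. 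Granting that, $M$ lands on an orthogonal direct summand of $\wedge^2 W$, so $\alpha$ is everywhere nondegenerate, and your identification $V = M^\perp \cong \Ker ( \alpha ) = S_W$ closes the argument in the same way that the paper's ``uniqueness of direct sum decomposition'' step does; so you should simply cite that result rather than leave the point open.
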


\begin{proof} As $M^2 \cong L$, in fact $M$ itself is an $L$-valued orthogonal bundle. The orthogonal direct sum $V \perp M$ is a rank six orthogonal bundle of determinant $L^3$. Then $V \perp M$ admits a rank three isotropic subbundle $\bE$ by Proposition \ref{DetIrns}. It is easy to see that
\[ E \ := \ \bE \cap V \ = \ \Ker \left( \bE \ \to \ ( V \perp M ) \ \to \ M \right) \]
is an isotropic subbundle of rank two in $V$. Thus we obtain (a).

Note moreover that $\deg (E) + m = \deg ( \bE )$. 
 Suppose $\deg(E) \equiv m \mod 2$. Then $\deg ( \bE )$ is even. By Theorem \ref{rk6Structure} (a), there exists a bundle $W$ of rank four and determinant $L$ such that $V \perp M \cong \wedge^2 W$. 

Now the subbundle $M \subset \wedge^2 W$ is nowhere isotropic with respect to the orthogonal structure, being an orthogonal direct summand. By Proposition \ref{rk6Envelop}, vectors in $\wedge^2 W$ isotropic with respect to the orthogonal structure correspond precisely to the decomposable tensors. Therefore, the image of $M \to \wedge^2 W$ is indecomposable at all points. Tensoring by $W^*$ and contracting, we obtain an antisymmetric isomorphism
\[ M \otimes W^* \ \to \ \left( \wedge^2 W \right) \otimes W^* \ \to \ W , \]
equivalently, an $M$-valued symplectic structure on $W$. By Proposition \ref{wedge2W}, we have
\[ V \perp M \ = \ \wedge^2 W \ = \ S_W \perp M . \]
By uniqueness of direct sum decompositions, we have $V \cong S_W$ as vector bundles; and by \cite[Proposition 3.1]{Gro} they are isomorphic as orthogonal bundles. This proves (b).

If $\deg(E) \not\equiv m \mod 2$, then $\deg ( \bE )$ is odd. Then $V(-x) \perp M(-x)$ is $L(-2x)$-valued orthogonal and admits the isotropic subbundle $\bE(-x)$ which is of rank three and even degree. By part (a), there exists a rank four $M(-x)$-valued symplectic bundle $W_1$ such that $V(-x) \cong S_{W_1}$, as desired. \end{proof}

\subsection{Enveloping bundle structure}

Let $W$ be a rank four $M$-valued symplectic bundle as above, and $V = S_W$ the associated rank five $M^2$-valued orthogonal bundle. We now determine the quadric fibration $Q_V \subset \PP V$.

By Proposition \ref{rk6Envelop}, the projective bundle $\PP ( \wedge^2 W )$ is an enveloping bundle for $Q_{\wedge^2 W} = \Gr (2, W)$. As the orthogonal structure on $V$ is inherited from $\wedge^2 W$, we have
\[ Q_{S_W} \ = \ Q_{\wedge^2 W} \cap \PP V . \]

Now recall from \cite[p.\ 759 ff.]{GH} that a \textsl{linear line complex in $\PP^3$} is a three-dimensional family of lines defined by a hyperplane section of the Pl\"ucker image of $\Gr ( 2, \C^4 )$ in $\PP^5$. The datum of a linear line complex is equivalent to a choice of bilinear antisymmetric form on $\C^4$ up to scalar, which is nondegenerate if and only if the intersection is smooth; equivalently, the hyperplane is not tangent to $\Gr ( 2 , \C^4 )$ at any point. By the above discussion, for each $p \in C$, the hyperplane $\PP V \subset \PP ( \wedge^2 W )$ determines a smooth linear line complex in $\PP W|_p$. Clearly this is isomorphic to the Lagrangian Grassmannian $\LG (W|_p)$ parameterizing two-dimensional $\alpha$-isotropic subspaces of $W|_p$. Summarizing, we have:

\begin{proposition} \label{rk5Envelop} Let $V = S_W$ be as above. Then the projectivization of the locus of isotropic vectors in $\PP V$ is the Lagrangian Grassmann bundle $\LG ( W )$. \end{proposition}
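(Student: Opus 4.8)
The plan is to reduce the statement to a fiberwise computation, using the enveloping bundle structure of $\PP(\wedge^2 W)$ already established in Proposition \ref{rk6Envelop}. First I would note that, since the orthogonal form on $V = S_W$ is inherited from the one on $\wedge^2 W$, a vector of $S_W \subset \wedge^2 W$ is isotropic for the form on $S_W$ precisely when it is isotropic for the form on $\wedge^2 W$. This gives
\[ Q_V \ = \ Q_{\wedge^2 W} \cap \PP V, \]
and by Proposition \ref{rk6Envelop} the right-hand side is the relative Pl\"ucker locus $\Gr(2, W)$ intersected with the hyperplane subbundle $\PP V$. It then suffices to identify this intersection, fiber by fiber, with the Lagrangian Grassmann bundle $\LG(W)$.

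Next I would carry out the fiberwise identification. Fixing $p \in C$, the projective space $\PP(\wedge^2 W|_p)$ is a $\PP^5$, the locus $Q_{\wedge^2 W}|_p = \Gr(2, W|_p)$ is the Pl\"ucker quadric fourfold, and $\PP V|_p = \PP(S_W|_p)$ is the hyperplane defined by the linear functional $\alpha|_p \colon \wedge^2 W|_p \to M|_p$, since $S_W = \Ker(\alpha)$. A point of the intersection is a decomposable class $[v \wedge w]$ satisfying $\alpha(v \wedge w) = 0$; but $\alpha(v \wedge w)$ is just the value of the symplectic form on the pair $(v, w)$, which vanishes exactly when the plane $\langle v, w \rangle \subset W|_p$ is $\alpha$-isotropic. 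Hence the intersection is the variety of Lagrangian planes $\LG(W|_p)$, matching the discussion preceding the statement.

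Finally I would globalize: the identification above is canonical and algebraic in $p$, and the Lagrangian planes form a subbundle of $\Gr(2, W)$ because $\alpha$ is a bundle map of constant rank, so the fiberwise identifications assemble into an isomorphism of subfibrations $Q_V \cong \LG(W)$ over $C$. As a consistency check, nondegeneracy of $\alpha|_p$ forces the hyperplane $\PP V|_p$ to avoid tangency with the Pl\"ucker quadric, so the intersection is a smooth quadric threefold (a smooth linear line complex), in agreement with $V = S_W$ being orthogonal and $Q_V \to C$ a fibration in smooth quadrics. I expect the only genuinely delicate point to be this last matching of the abstract Lagrangian Grassmann bundle, defined through $\alpha$, with the concrete hyperplane section $\Gr(2, W) \cap \PP V$; but this is precisely the classical description of a smooth linear line complex recalled just before the proposition, now applied uniformly along $C$.
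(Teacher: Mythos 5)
Your proposal is correct and follows essentially the same route as the paper: both identify $Q_{S_W} = Q_{\wedge^2 W} \cap \PP V$ via the inherited form, invoke Proposition \ref{rk6Envelop} to see this as the hyperplane section $\Gr(2,W) \cap \PP V$, and then recognize that section fiberwise as $\LG(W)$. Your direct computation that $\alpha(v \wedge w) = 0$ exactly when $\langle v, w \rangle$ is $\alpha$-isotropic simply makes explicit what the paper delegates to the classical correspondence between smooth linear line complexes and nondegenerate antisymmetric forms cited from Griffiths--Harris.
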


\subsection{Isotropic subbundles of \texorpdfstring{$S_W$}{S\_W}}

Let $W$ be a rank four $M$-valued symplectic bundle. To classify rank two isotropic subbundles of the rank five $M^2$-valued orthogonal bundle $S_W$, we could use a strategy similar to that in \cite[{\S} 8]{CCH3}. However, we offer instead an approach exploiting the geometry of the enveloping bundle, which also gives further insight into the rank six case.

Let $E \subset S_W$ be a rank two isotropic subbundle. Then $\PP E$ is a $\PP^1$-bundle contained in $\LG ( W )$.  As above, by nondegeneracy of the symplectic form, for each $p \in C$ the intersection $\PP ( S_W|_p ) \cap \Gr (2, W|_p)$ is smooth. Then by \cite[p.\ 759]{GH}, there exist a line $N_p$ and a hyperplane $H_p$ in $W|_p$ such that $\PP W$ parameterizes the pencil of two-dimensional (isotropic) subspaces of $V|_p$ forming a complete flag
\[ 0 \ \subset \ \ N_p \ \subset \ \Lambda \ \subset \ H_p \ \subset \ W|_p . \]
It follows that
\begin{equation} E|_p \ = \ \{ v \wedge w \in \wedge^2 W : v \in N_p , w \in H_p \} \ \cong \ N_p \otimes \frac{H_p}{N_p} . \label{RankFiveIsotDescr} \end{equation}

We claim moreover that $H_p = N_p^\perp$, the orthogonal complement of $N_p$ with respect to the symplectic form $\alpha$. To see this: $H_p$ contains a pencil of two-dimensional subspaces containing $N_p$ which are moreover isotropic, so in particular annihilate $N_p$. As $H_p$ has dimension three, it is spanned by elements of $N_p^\perp$. As $\dim ( N_p^\perp ) = \dim H_p$, we have equality. Using local triviality, we see that the union of the subspaces $N_p$ is a (uniquely determined) line subbundle $N \subset W$.

We now define a map $\lambda_N \colon N \otimes N^\perp \to \wedge^2 W$ by
\[ \lambda_N ( v \otimes w ) \ = \ v \wedge w . \]
Clearly $\lambda_N$ is the restriction to $N \otimes N^\perp$ of the map $\tau_N$ defined in (\ref{TauN}), and
\[ \lambda_N ( N \otimes N^\perp ) \ = \ \tau_N ( N \otimes V ) \cap \left( \wedge^2 N^\perp \right) \ \cong \ N \otimes \frac{N^\perp}{N} . \]
Now as $N^\perp / N$ is $M$-valued symplectic of rank two, it has determinant $M$ and degree $m$. Thus $\deg \Image ( \lambda_N ) = 2 \cdot \deg N + m$.

Conversely, given any line subbundle $N \subset W$, the subbundle $\lambda_N ( N \otimes N^\perp )$ is a rank two subbundle of $\wedge^2 W$. As $\alpha ( v \wedge w ) = 0$ for any $v \in N$ and $w \in N^\perp$, we have $\Image \left( \lambda_N \right) \subseteq \Ker \left( \alpha \colon \wedge^2 W \to M \right) = S_W$.  As moreover every element of $\Image ( \lambda_N )$ is decomposable, $\PP \Image ( \Lambda_N )$ is contained in $Q_{\wedge^2 W} \cap \PP S_W = \LG ( W )$. Hence $\Image ( \Lambda_N )$ is isotropic by Lemma \ref{SimpleIsotropyCrit}. Thus we may summarize as follows.

\begin{theorem} \label{rk5IsotSubbs} The association $N \mapsto \lambda_N ( N \otimes N^\perp )$ defines isomorphisms $\Quoto_{1,d} ( W ) \isom \IQo_{2, 2d + m} ( S_W )$. Note also that  the association 
$N \mapsto \Image \left( \lambda_N \right) \otimes \Oc (x)$ defines an isomorphism
\[ \Quoto_{1,d} ( W ) \cong \IQo_{2, 2d+m+2} ( S_W \otimes \Oc (x) ) . \] \end{theorem}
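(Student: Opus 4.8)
The plan is to exhibit a pair of mutually inverse morphisms between $\Quoto_{1,d}(W)$ and $\IQo_{2, 2d+m}(S_W)$, since the underlying pointwise bijection has already been established in the discussion preceding the statement. The degree bookkeeping is immediate: for a line subbundle $N \subset W$ of degree $d$ the image $\Image(\lambda_N) \cong N \otimes (N^\perp / N)$ is isotropic of rank two and degree $2d + m$ by the computation above, so the forward association lands in $\IQo_{2,2d+m}(S_W)$. To make this association a morphism, I would use a universal family: on $\Quoto_{1,d}(W) \times C$ there is a universal line subbundle $\cN$ of (the pullback of) $W$, and, pulling back the symplectic form $\alpha$, the orthogonal complement $\cN^\perp := \Ker ( W \to \cN^* \otimes M )$ is a rank three subbundle. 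The relative antisymmetrisation $\lambda_\cN \colon \cN \otimes \cN^\perp \to \wedge^2 W$ has kernel $\cN \otimes \cN$ and hence constant rank two, so $\Image(\lambda_\cN)$ is a flat family of rank two subbundles lying in $S_W$ and isotropic fibrewise by Lemma \ref{SimpleIsotropyCrit}. By the universal property of the Quot scheme, together with the fact that $\IQo_{2,2d+m}(S_W)$ is a locally closed subscheme of $\Quoto_{2, 2d+m}(S_W)$, this family induces the morphism $\Phi \colon N \mapsto \Image(\lambda_N)$.

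For the inverse, I would recover $N$ from $E$ by a natural bundle map. Given $[E \hookrightarrow S_W]$, consider
\[ \beta \colon W \ \to \ \Hom ( E , \wedge^3 W ) , \qquad u \ \mapsto \ ( \eta \mapsto u \wedge \eta ) . \]
Since at each point $E|_p = \{ v \wedge w : v \in N_p ,\ w \in N_p^\perp \}$, a vector $u \in W|_p$ annihilates all of $E|_p$ under wedging if and only if $u \in N_p$; thus $\beta$ has constant rank and $\Ker ( \beta )$ is precisely the line subbundle $N$. Carrying out this construction over $\IQo_{2,2d+m}(S_W) \times C$ with the universal isotropic subbundle produces the reverse morphism $\Psi$. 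The identities $\Psi \circ \Phi = \mathrm{id}$ and $\Phi \circ \Psi = \mathrm{id}$ then reduce to the pointwise facts already in hand: the former is the verification that $\Ker(\beta) = N$ when $E = \Image(\lambda_N)$, and the latter is exactly the statement, proved just before the theorem, that every rank two isotropic $E \subset S_W$ equals $\Image(\lambda_N)$ for the recovered $N$.

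For the twisted assertion I would reduce to the first isomorphism by the device used in Corollary \ref{rk6IsotSubbsCor}: tensoring by $\Oc(x)$ gives a canonical isomorphism $\IQo_{2, e}(S_W) \isom \IQo_{2, e+2}(S_W \otimes \Oc(x))$, the degree increasing by $2$. Composing this (for $e = 2d + m$) with $\Phi$ yields the desired isomorphism $\Quoto_{1,d}(W) \cong \IQo_{2, 2d+m+2}(S_W \otimes \Oc(x))$, carrying $N$ to $\Image(\lambda_N) \otimes \Oc(x)$.

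The main obstacle is not the set-theoretic bijection, which is supplied by the preceding geometric analysis, but the scheme-theoretic upgrade. One must check that $\cN^\perp$, the image of $\lambda_\cN$, and $\Ker(\beta)$ all have locally constant rank over the relevant parameter spaces, so that they genuinely define flat families (hence morphisms into the Quot schemes) rather than mere fibrewise data, and that $\Phi$ and $\Psi$ are inverse as morphisms and not only on closed points. Given the constant-rank computations above these checks are routine, but they are the step demanding care.
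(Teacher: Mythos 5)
Your proposal is correct and takes essentially the same route as the paper: there the theorem is stated simply as a summary of the preceding classification of rank two isotropic subbundles of $S_W$ via the linear line complex geometry (the correspondence $N \leftrightarrow \lambda_N ( N \otimes N^\perp )$ together with the degree computation $\deg \Image (\lambda_N) = 2 \deg N + m$), which is exactly the pointwise bijection you invoke, and your treatment of the twisted statement uses the same $\Oc (x)$-twisting device as Corollary \ref{rk6IsotSubbsCor}. Your universal-family construction and the inverse morphism obtained as $\Ker (\beta)$ with $\beta \colon W \to \Hom ( E, \wedge^3 W )$ make explicit the scheme-theoretic content that the paper leaves implicit, but they do not alter the underlying argument.
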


\begin{remark} \label{LinkIsotSubbsFiveSix} Let $E$ be a rank two isotropic subbundle of $S_W$. We observe from the description (\ref{RankFiveIsotDescr}) that
\[ E \ = \ \tau_N ( N \otimes V ) \cap \left( \wedge^2 N^\perp \right) ; \]
that is, $E$ is the intersection of two isotropic rank three subbundles $\wedge^2 N^\perp$ and $N \otimes \frac{W}{N}$ of the rank six orthogonal bundle $\wedge^2 W$, as described in Theorem \ref{rk6IsotSubbs}. Moreover, in view of \cite[Proposition, p.\ 735]{GH}, the fact that
\[ \dim \left( \wedge^2 \left( N^\perp|_p \right) \cap \lambda_N ( N \otimes N^\perp ) \right) \ = \ 2 \ \not\equiv 3 \mod 2 \]
reflects the fact that these two subbundles define elements of opposite components of the orthogonal Grassmannian $\OG (3, \wedge^2 V|_p )$ at each point. \end{remark}

Next, by Theorem \ref{rk5Envelop}, an isotropic line subbundle of $S_W$ is nothing but a section of the Lagrangian Grassmannian bundle $\LG (W) \to C$. This equivalent to a choice of rank two isotropic subbundle $E \subset W$, and then $\wedge^2 E$ is the isotropic line subbundle of $S_W$. Thus we have:

\begin{theorem} \label{rk5rk1} Let $W$ be an $M$-valued symplectic bundle and $S_W$ the associated rank five $M^2$-valued orthogonal bundle. Then for each $d$, the assignment $E \mapsto \wedge^2 E$ defines an isomorphism
\[ \LQ_d^\circ (W) \ \isom \ \IQo_{1,d} ( S_W ) , \]
where $\LQ_d^\circ (W)$ is the subscheme of $\Quoto_{2,d}(W)$ consisting of Lagrangian (maximal rank isotropic) subbundles of the symplectic bundle $W$. \end{theorem}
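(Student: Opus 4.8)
The plan is to realize both $\LQ_d^\circ(W)\subset\Quoto_{2,d}(W)$ and $\IQo_{1,d}(S_W)\subset\Quoto_{1,d}(S_W)$ as one and the same scheme of sections of the relative Lagrangian Grassmann bundle $\LG(W)\to C$, with the map $E\mapsto\wedge^2 E$ implementing the tautological Pl\"ucker identification. Recall that saturated rank two subbundles $E\subset W$ of degree $d$ are the same as sections of $\Gr(2,W)\to C$ of the appropriate numerical type, and that $\LQ_d^\circ(W)$ is cut out by the isotropy condition; that is, it is the scheme of such sections landing in the relative Lagrangian Grassmannian $\LG(W)$. On the other side, by Lemma \ref{SimpleIsotropyCrit} a saturated line subbundle $N\subset S_W$ is isotropic if and only if $\PP N\subset Q_{S_W}$, so $\IQo_{1,d}(S_W)$ is the scheme of sections of $Q_{S_W}\to C$; and by Proposition \ref{rk5Envelop} we have $Q_{S_W}=\LG(W)$ inside $\PP S_W\subset\PP(\wedge^2 W)$. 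I would fix the relative Pl\"ucker embedding $\iota\colon\LG(W)\hookrightarrow\PP(\wedge^2 W)$ together with the tautological rank two subbundle $U\subset\pi^*W$ on $\Gr(2,W)$, restricted to $\LG(W)$; the key numerical input is the standard identity $\iota^*\cO_{\PP(\wedge^2 W)}(-1)\cong\wedge^2 U=\det U$.

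Next I would construct the forward morphism. A $T$-flat family of Lagrangian subbundles $\mathcal{E}\hookrightarrow\pi_C^*W$ over $T\times C$ produces $\wedge^2\mathcal{E}\hookrightarrow\pi_C^*\wedge^2 W$, a family of line subbundles. Since each fibre of $\mathcal{E}$ is $\alpha$-isotropic, $\alpha(\wedge^2\mathcal{E})=0$, so $\wedge^2\mathcal{E}$ lands in $\pi_C^*S_W=\Ker\alpha$; and since each fibre of $\wedge^2\mathcal{E}$ is a decomposable bivector, $\PP(\wedge^2\mathcal{E})\subset Q_{S_W}$, whence $\wedge^2\mathcal{E}$ is isotropic by Lemma \ref{SimpleIsotropyCrit}. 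This defines a morphism $\LQ_d^\circ(W)\to\IQo_{1,e}(S_W)$, and the target degree $e$ is read off from $\iota^*\cO_{\PP(\wedge^2 W)}(-1)\cong\det U$: for the section $\sigma\colon T\times C\to\LG(W)$ with $\mathcal{E}=\sigma^*U$, the associated line subbundle is $\sigma^*\iota^*\cO_{\PP(\wedge^2 W)}(-1)=\det(\sigma^*U)=\det\mathcal{E}$, of fibrewise degree $\deg\mathcal{E}=d$. Thus $e=d$ and the numerical invariants match on the nose.

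Finally I would exhibit the inverse, which is where the genuine content lies. Given an isotropic line subbundle $\mathcal{N}\hookrightarrow\pi_C^*S_W$, Lemma \ref{SimpleIsotropyCrit} together with Proposition \ref{rk5Envelop} shows that the induced section of $\PP S_W\to C$ factors through $Q_{S_W}=\LG(W)$, yielding a section $\sigma\colon T\times C\to\LG(W)$; pulling back the tautological subbundle gives a rank two Lagrangian subbundle $\sigma^*U\hookrightarrow\pi_C^*W$, hence a point of $\LQ_d^\circ(W)$. Geometrically this is the assignment sending a decomposable bivector $v\wedge w$ to the plane $\langle v,w\rangle$ it spans, which is Lagrangian precisely because $v\wedge w\in\Ker\alpha$. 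The two constructions are mutually inverse by the Pl\"ucker identity $\wedge^2(\sigma^*U)=\sigma^*\iota^*\cO_{\PP(\wedge^2 W)}(-1)=\mathcal{N}$, so $E\mapsto\wedge^2 E$ is an isomorphism of schemes.

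The main obstacle is exactly this last upgrade, from a fibrewise bijection to an isomorphism of the representing schemes. I expect to overcome it not by a deformation-theoretic comparison of tangent spaces, but by the structural observation that both $\LQ_d^\circ(W)$ and $\IQo_{1,d}(S_W)$ are literally the scheme of sections of the single fibration $\LG(W)\to C$ in fixed numerical type, the identification of the two fibrations being the content of Proposition \ref{rk5Envelop} and the matching of degrees being the Pl\"ucker identity $\iota^*\cO_{\PP(\wedge^2 W)}(-1)\cong\det U$. Under this, $E\mapsto\wedge^2 E$ is simply the identity on sections.
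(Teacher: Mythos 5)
Your proposal is correct and follows essentially the same route as the paper: the paper's proof likewise invokes Proposition \ref{rk5Envelop} to identify $Q_{S_W}$ with $\LG(W)$, so that an isotropic line subbundle of $S_W$ is precisely a section of $\LG(W) \to C$, i.e.\ a Lagrangian subbundle $E \subset W$ with the line subbundle recovered as $\wedge^2 E$. Your write-up merely makes explicit the family/scheme-theoretic bookkeeping (tautological subbundle, Pl\"ucker identity $\iota^*\cO_{\PP(\wedge^2 W)}(-1) \cong \det U$, degree matching) that the paper leaves implicit.
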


\subsubsection{Enumeration of maximal isotropic subbundles} By Theorem \ref{rk5IsotSubbs}, the number of rank two isotropic subbundles of maximal degree of $V$ equals the number of maximal line subbundles of the rank four symplectic bundle $W$. However, the counting formula in Theorem \ref{MaxLineSubbs} is valid only for a general vector bundle of rank
four, while the symplectic bundles of rank four are contained in a proper closed subset of the moduli of vector bundles, so we cannot apply Theorem \ref{MaxLineSubbs} directly. Therefore, we leave the following as a conjecture.

\begin{conjecture} \label{rk5enum} Let $V$ be a stable rank five $\Oc$-valued orthogonal bundle which is general in its component of moduli.
\begin{enumerate}
\item[(a)] Suppose $g \equiv 1 \mod 4$. Suppose $w_2 (V) = 0$, so that $V \cong S_W$ for a $W \in \MS ( 4, \Oc )$ which may be assumed to be general. Then $V$ has $4^g$ maximal rank two isotropic subbundles, each of degree $- \frac{3}{2}(g-1)$.
\item[(b)] Suppose $g \equiv 3 \mod 4$. Suppose $w_2 (V) = 1$, so that $V \cong S_{W_1} \otimes \Oc (x)$ for some $W_1 \in \MS ( 4, \Oc (-x))$ which may be assumed to be general. Then $V$ has $4^g$ rank two isotropic subbundles of maximal degree $-\frac{3}{2}(g-1)$.
\end{enumerate}
\end{conjecture}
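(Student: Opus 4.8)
The plan is to reduce both parts to a single enumerative statement about line subbundles of a general rank four symplectic bundle, and then to confront the genericity problem flagged just before the conjecture. By Theorem \ref{rk5moduli}, a general such $V$ is of the form $S_W$ with $W\in\MS(4,\Oc)$ in case (a), and of the form $S_{W_1}\otimes\Oc(x)$ with $W_1\in\MS(4,\Oc(-x))$ in case (b), where in each case the symplectic bundle may be taken general in moduli. By Theorem \ref{rk5IsotSubbs}, rank two isotropic subbundles of $S_W$ of degree $2d+m$ correspond bijectively to line subbundles of $W$ of degree $d$, with $m=\deg M$. In case (a) we have $m=0$ and $\deg W=0$, and degree $-\frac{3}{2}(g-1)$ corresponds to $d=-\frac{3}{4}(g-1)$; in case (b), using the twisted isomorphism of Theorem \ref{rk5IsotSubbs}, we have $m=-1$ and $\deg W_1=-2$, and the maximal degree corresponds to $d=\frac{1}{4}(1-3g)$. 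In both cases the congruence $\deg(\cdot)\equiv 1-g\mod 4$ required in Theorem \ref{MaxLineSubbs} holds exactly under the stated hypothesis on $g$, and the associated isotropic Quot scheme has expected dimension zero, consistent with Lemma \ref{zerodim}. Thus everything reduces to the claim that a general rank four symplectic bundle has exactly $4^g$ line subbundles of maximal degree.

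At this point the obvious move is to invoke Theorem \ref{MaxLineSubbs}, but this applies only to a \emph{general vector bundle}, whereas $\MS(4,M)$ is a proper closed sublocus of $\SU(4,M^2)$, of dimension $10(g-1)$ against $15(g-1)$. The conceptual heart of a proof would be the observation that $4^g$ is the \emph{virtual} number of maximal line subbundles: it is the degree of a zero-dimensional virtual cycle carried by the relative Quot scheme, and as such should be a deformation invariant, constant over the irreducible moduli of rank four bundles of the given degree, hence equal to $4^g$ for symplectic bundles as well. Granting this, the honest count for a general symplectic $W$ will equal $4^g$ provided the Quot scheme $\Quoto_{1,d}(W)$ is finite, reduced, and supported in the saturated locus.

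It therefore remains to verify, for a general $W\in\MS(4,M)$, that: (i) the maximal degree of a line subbundle of $W$ equals the generic value $d$, i.e.\ $W$ has the generic Segre invariant $s_1(W)$, so that $\Quoto_{1,d}(W)$ is zero-dimensional and every degree $d$ subsheaf is automatically saturated; and (ii) at each maximal line subbundle $N\subset W$ the obstruction space $H^1(C,\Hom(N,W/N))$ vanishes, so that the scheme is reduced. Note that $N$ is automatically isotropic, and sits in a flag $0\subset N\subset N^\perp\subset W$ with $N^\perp/N$ rank two symplectic and $W/N^\perp\cong N^{-1}\otimes M$; this extra structure is exactly what separates the problem from the generic vector-bundle situation.

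The main obstacle is establishing (i) and (ii) on the symplectic locus rather than on the full moduli. I would attempt a dimension count: show that the sublocus of $\MS(4,M)$ consisting of bundles whose maximal line-subbundle degree exceeds $d$, together with the locus where some $H^1(\Hom(N,W/N))$ fails to vanish, is a proper closed subset, so that a general symplectic bundle avoids it. Concretely, one can stratify $\MS(4,M)$ by the symplectic Segre invariant and bound the dimensions of the strata, in the spirit of the Lagrangian computations of \cite{CCH2}; or one can try to exhibit a single explicit symplectic bundle---for instance a sufficiently general symplectic extension $0\to A\to W\to A^*\otimes M\to0$ built from a Lagrangian subbundle $A$, or a symplectic direct sum $W=W'\perp W''$ of rank two symplectic bundles---with finite, reduced Quot scheme of length $4^g$, and then spread that behaviour out over moduli. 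The genuine difficulty, and the reason the statement is posed as a conjecture, is that the symplectic form couples $N$ to $N^\perp$, so controlling the Segre invariant and the vanishing of $H^1$ simultaneously on the symplectic sublocus is considerably more delicate than in Holla's generic setting.
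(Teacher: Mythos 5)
The statement you were asked to prove is, in the paper, precisely a \emph{conjecture}, and it is left unproven there for exactly the reason you ran into: the paper's own discussion preceding it reduces the count, via Theorem \ref{rk5moduli} and Theorem \ref{rk5IsotSubbs}, to counting maximal line subbundles of a general rank four symplectic bundle $W$, and then observes that Theorem \ref{MaxLineSubbs} cannot be invoked because $\MS(4,M)$ is a proper closed sublocus (of dimension $10(g-1)$) of the full moduli $\SU(4,M^2)$ (of dimension $15(g-1)$), so generality in $\MS(4,M)$ does not imply the generality hypothesis of Holla's theorem. Your reduction, including the degree bookkeeping in both cases (a) and (b), is correct and coincides with the paper's; your identification of the obstacle is also the paper's own.

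However, your proposal does not close that gap, so it is not a proof. The deformation-invariance heuristic (the virtual count $4^g$ is constant over the irreducible family of all rank four bundles of fixed degree, hence also equals the virtual count for symplectic $W$) is fine as far as it goes, but converting the virtual number into an honest enumeration requires exactly your conditions (i) and (ii): that for general $W \in \MS(4,M)$ the Quot scheme $\Quoto_{1,d}(W)$ is zero-dimensional with $d$ the generic maximal degree, and that $H^1(C,\Hom(N,W/N))$ vanishes at every point. These are stated as things you ``would attempt'' via a stratification of $\MS(4,M)$ by a symplectic Segre invariant or via an explicit example, but neither is carried out, and they constitute the entire mathematical content of the conjecture: the symplectic form forces every line subbundle $N$ into the flag $N \subset N^\perp \subset W$, so the relevant loci inside $\MS(4,M)$ cannot be controlled by the dimension counts available for the full moduli space. (The paper defers this computation to a forthcoming work with D.~Cheong.) In short: correct setup, correct diagnosis, but the two key verifications remain open, so what you have is a restatement of why this is a conjecture rather than a proof of it.
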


\noindent On the other hand, for isotropic line subbundles we have the following.

\begin{theorem} Let $V$ be a rank five {\color{blue} $\cO_C$-valued} orthogonal bundle which is general in its component of moduli. Then each isotropic line subbundle has degree $1-g$, and the number of such subbundles is given by
\[ \begin{array}{|c|c|c|} & g \hbox{ even} & g \hbox{ odd} \\
\hline
 w_2 (V) = 0 & 2^{g-1} \cdot ( 3^g - 1 ) & 2^{g-1} \cdot ( 3^g + 1 ) \\
\hline
 w_2 (V) = 1 & 2^{g-1} \cdot ( 3^g + 1 ) & 2^{g-1} \cdot ( 3^g - 1 )
\end{array} \]
\end{theorem}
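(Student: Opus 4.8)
The plan is to transport the problem, using the structure theory established earlier in this section, to the enumeration of maximal Lagrangian subbundles of a general rank four symplectic bundle, and then to invoke \cite{CCH2}. First I would use Theorem \ref{rk5moduli} and Theorem \ref{rk5Structure} to realise $V$ as coming from such a symplectic bundle. If $w_2(V) = 0$, then $V \cong S_W$ for some $W \in \MS(4, \Oc)$; if $w_2(V) = 1$, then $V \cong S_{W_1} \otimes \Oc(x)$ for some $W_1 \in \MS(4, \Oc(-x))$. Since $\Theta_5^0$ and $\Theta_5^1$ are finite and surjective between irreducible varieties of the same dimension, the image of the non-generic locus of $\MS(4, \cdot)$ is a proper closed subset, so a general $V$ in its component arises from a symplectic bundle which is itself general in moduli. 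By Theorem \ref{rk5rk1}, the map $E \mapsto \wedge^2 E$ identifies $\IQo_{1,d}(S_W)$ with the scheme $\LQ_d^\circ(W)$ of rank two Lagrangian subbundles of $W$ of degree $d$; in the case $w_2(V) = 1$, tensoring by $\Oc(x)$ gives in addition $\IQo_{1,d}(V) \cong \LQ_{d-1}^\circ(W_1)$. Thus the isotropic line subbundles of $V$ are in degree-respecting bijection with the Lagrangian subbundles of the associated symplectic bundle.

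Next I would pin down the maximal degree. At a point $[E \subset W]$ of $\LQ_d^\circ(W)$, with symplectic form valued in a line bundle $M$, the tangent space is $H^0(C, \Sym^2 E^* \otimes M)$, so the expected dimension is $\chi(\Sym^2 E^* \otimes M) = 3(\deg M + 1 - g) - 3\deg E$, which vanishes exactly when $\deg E = \deg M + 1 - g$. For $w_2(V) = 0$ we have $\deg M = 0$, giving Lagrangian subbundles of degree $1-g$; for $w_2(V) = 1$ the valuation has degree $-1$, giving Lagrangian subbundles of $W_1$ of degree $-g$, which after the twist by $\Oc(x)$ again correspond to isotropic line subbundles of $V$ of degree $1-g$. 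So in both cases the generic finite count occurs in degree $1-g$, and because the symplectic bundle is general, \cite{CCH2} guarantees that the relevant Lagrangian Quot scheme is zero-dimensional and reduced, so that its length equals the actual number of such subbundles.

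Finally, I would read off the count from the enumeration of \cite{CCH2} for a general rank four symplectic bundle. The Lagrangian Grassmannian here is $\LG(2,4) \cong Q^3$, a smooth three-dimensional quadric, and the enumerative formula takes the shape of a Vafa--Intriligator-type sum over the critical points of the associated superpotential; its value depends only on $g$ and on the parity of $\deg M$. Since $\deg M$ is even precisely when $w_2(V) = 0$ and odd precisely when $w_2(V) = 1$, combining the two parities should yield the unified count $2^{g-1}\bigl(3^g - (-1)^{g + w_2(V)}\bigr)$, which expands to the four entries of the table. The factor $3^g$ reflects the quantum multiplication on $Q^3$, while $2^{g-1}$ and the sign encode the two-torsion data separating the components. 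I expect the main obstacle to be precisely this last step: extracting the explicit numbers $2^{g-1}(3^g \pm 1)$ from the formula of \cite{CCH2} and matching each to the correct pair $(w_2(V), g \bmod 2)$, including a careful verification of the sign convention, since the Vafa--Intriligator evaluation for $\LG(2,4)$ and the identification of the parity-dependent component must be carried out with care.
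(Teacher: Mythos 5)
Your proposal is correct and follows essentially the same route as the paper's proof: transport the problem via Theorem \ref{rk5moduli} and Theorem \ref{rk5rk1} to maximal Lagrangian subbundles of a general rank four symplectic bundle, identify the maximal degree ($1-g$ after the twist in the $w_2(V)=1$ case), and read off the count from \cite{CCH2}, which is exactly what the paper does by citing Proposition 3.2 and Corollary 6.3 there. Your unified formula $2^{g-1}\bigl(3^g - (-1)^{g+w_2(V)}\bigr)$ agrees with all four entries of the table.
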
 

\begin{proof} Suppose $w_2 (V) = 1$. By Theorem \ref{rk5moduli} (b), we may assume that $V = S_W \otimes \Oc ( x )$ for a rank four $\Oc(-x)$-valued symplectic $W$ which is general in moduli. By Theorem \ref{rk5rk1}, each maximal isotropic line subbundle is of the form $(\wedge^2 E) \otimes \Oc (x)$ for a maximal Lagrangian subbundle $E \subset W$. By \cite[Proposition 3.2]{CCH2} we have $\deg (E) = -g$, whence $\deg ( \wedge^2 E \otimes \Oc (x) ) = 1-g$; and the number of such $E$ is given by \cite[Corollary 6.3 {\color{blue}(2) and (4)}]{CCH2}. The case where $w_1 ( V ) = 0$ is similar. \end{proof}

\section{Higher rank}

We shall now conclude by formulating some conjectures on orthogonal bundles of higher rank and their isotropic Quot schemes in general, based on the extra geometric information for low rank gathered in the previous sections. We shall use results from the literature to a greater extent than before.

Let $i \in \{ 0 , 1 \}$. By \cite[{\S} 5.4]{CH14} and \cite[Theorem 6.1]{CH15}, a general $V \in \MO^i ( 2n , \Oc )$ or $\MO^i ( 2n-1 , \Oc )$ has a finite number of isotropic subbundles of rank $n$ and maximal degree $-\frac{1}{2}n(g-1)$ under the following congruence condition:
\[
n (g-1) \equiv 2i \mod 4
\]     
We denote this number by  $N^0(g,r)$, where $r = 2n$ or $2n-1$.


Theorems \ref{rk4enum}, \ref{rk3enum} and \ref{rk6enum} and Conjecture \ref{rk5enum} suggest the following equalities for arbitrary rank:

\begin{conjecture} \label{ConjOne} Let $g, r, n, i$ be as above. For all $r \ge 3$, we have
\[
N^0(g, 2n) \ = \ 2 \cdot N^0(g, 2n-1) . 
\]
\end{conjecture}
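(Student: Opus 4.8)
The plan is to read $N^0(g,r)$ as a (virtual) count of sections of the relative isotropic Grassmann bundle $\OG(\lfloor r/2\rfloor, V)\to C$ of the appropriate degree, and to exploit the classical geometry of maximal isotropic subspaces that already animates this paper. The essential input is that for even rank $2n$ the fibre $\OG(n,V|_p)$ has \emph{two} connected components, each isomorphic to a spinor variety, whereas for odd rank $2n-1$ the fibre $\OG(n-1,V|_p)$ is connected and isomorphic to that \emph{same} spinor variety; this is the variety-level shadow of the inclusion $\Spin(2n-1,\C)\hookrightarrow\Spin(2n,\C)$. Accordingly, I would first split the maximal isotropic Quot scheme in even rank as a disjoint union $\IQo_{n,e}(V)=\IQo_+(V)\sqcup\IQo_-(V)$ indexed by these two components, so that for a general $V$ one has $N^0(g,2n)=N_++N_-$ with $N_\pm:=\#\,\IQo_\pm(V)$. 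The conjecture then reduces to the two equalities $N_+=N_-$ and $N_\pm=N^0(g,2n-1)$.

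The natural device for both equalities is the orthogonal direct sum $\widetilde V:=V'\perp\Oc$, where $V'$ is general in $\MO^i(2n-1,\Oc)$; since $w_2(\widetilde V)=w_2(V')$ (as in Theorem \ref{w2deg}), this places $\widetilde V$ in the correct component $\MO^i(2n,\Oc)$. Exactly as in the odd-rank case of Theorem \ref{w2deg}, every maximal isotropic subbundle $E\subset V'$ extends to a maximal isotropic $\bE\subset\widetilde V$ in precisely two ways, coming from the two isotropic line subbundles of the rank two orthogonal bundle $E^\perp/E$ (Theorem \ref{rk2Structure}); conversely $\bE\mapsto\bE\cap V'$ recovers $E$, and since the two extensions intersect in $E$, of corank one in each, they lie in \emph{opposite} spinor components. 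This yields bijections $\IQo_\pm(\widetilde V)\isom\IQo_{n-1,e}(V')$ on closed points, with degrees matching because $\deg\bE=\deg E$. Moreover, reflection in the nowhere-isotropic summand $\Oc\subset\widetilde V$ is a global orthogonal automorphism of determinant $-1$ interchanging the two components, which gives $N_+(\widetilde V)=N_-(\widetilde V)$ at one stroke.

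The main obstacle is to promote these bijections of \emph{sets} into an equality of enumerative \emph{numbers}, for $\widetilde V$ is highly non-generic: as the rank six case already demonstrates, a general even-rank orthogonal bundle contains no trivial orthogonal summand at all, and at each $\bE$ the Zariski tangent space $H^0(C,\Sym^2\bE^*\otimes L)$ to the isotropic Quot scheme picks up the contribution of the constant subsheaf of $\Sym^2\bE^*$, so that $\IQo_{n,e}(\widetilde V)$ is everywhere \emph{non-reduced}. One therefore cannot simply invoke constancy of the naive point count along a family degenerating a general $V$ to $\widetilde V$. The honest statement is that the virtual counts $N_\pm$, computed from the perfect obstruction theory with deformation space $H^0(C,\Sym^2\bE^*\otimes L)$ and obstruction space $H^1(C,\Sym^2\bE^*\otimes L)$, are deformation invariant; the task is then to check that the excess contribution of each $\bE\in\IQo_\pm(\widetilde V)$ matches that of the corresponding $E\in\IQo_{n-1,e}(V')$. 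Concretely, one would compare the two obstruction theories along the bijection using the filtration of $\Sym^2\bE^*$ with graded pieces $\Oc$, $E^*$ and $\Sym^2E^*$, so as to cancel the spurious $\Oc$-directions against one another. I expect this obstruction-matching to be the crux of any proof.

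A cleaner but more technology-heavy route would bypass $\widetilde V$ altogether: express $N_+$ and $N^0(g,2n-1)$ directly as intersection numbers in the quantum cohomology of the spinor variety --- a Vafa--Intriligator-type residue sum over the relevant Weyl-group data --- and observe that the two formulas coincide precisely because a spinor component of $\OG(n,2n)$ and the Grassmannian $\OG(n-1,2n-1)$ are the same homogeneous space. This would reduce the conjecture to a combinatorial identity between residue expressions. Either way, the difficulty is enumerative rather than structural: in arbitrary rank there is no analogue of the structure theorems $V\cong\wedge^2W$ or $V\cong E_1\otimes E_2$ to fall back on, so the elementary low-rank geometry used in the body of the paper must be supplanted by genuine obstruction-theoretic or quantum-cohomological input --- which is doubtless why the identity is recorded here only as a conjecture.
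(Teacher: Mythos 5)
The statement you were asked to prove is recorded in the paper only as a \emph{conjecture}, and the paper's own discussion consists of exactly the heuristic you give: the splitting of $\IQo_{n,e}(V)$ by the two components of $\OG(V)$, the doubling construction $V' \perp \Oc$ with its canonical isomorphisms $\IQo_{n,e}(V' \perp \Oc)_\delta \isom \IQo_{n-1,e}(V')$ for $\delta \in \{0,1\}$, and the explicit admission that this fails to be a proof because $V' \perp \Oc$ is not general in $\MO^i(2n,\Oc)$, so an argument on conservation of number is still required. Your proposal thus takes essentially the same approach as the paper and correctly pinpoints the same open gap (deformation invariance of the count for the non-generic, non-reduced isotropic Quot scheme of $V'\perp\Oc$, or alternatively a direct quantum-cohomological computation), which is precisely why the authors leave the identity as a conjecture to be settled in their forthcoming work.
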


An intuition behind the  conjecture is as follows. We have briefly mentioned the orthogonal Grassmannian bundle $\OG (V)$ of an orthogonal bundle $V$, which parameterizes isotropic subspaces of maximal dimension in fibers of $V$; equivalently, projective linear subspaces of maximal dimension contained in the quadrics $Q_V|_x$. If $V$ has rank $2n$ then by \cite[Proposition 2.12]{CCH3}, the fibration $\OG (V)$ has two connected components $\OG(V)_0$ and $\OG(V)_1$. (This has a particularly clear geometric interpretation in rank four; see Remark \ref{rk4OG}.) Following \cite[Definition 3.1]{CCH3}, we define $\IQoe_\delta$ to be the component of $\IQoe$ parameterizing subbundles belonging to $\OG(V)_\delta$. By \cite[Proposition 3.4 (3)]{CH15} and \cite[]{CCH3}, for all $e$, there are canonical isomorphisms
\[ \IQo_{n,e} \left( V \perp \Oc \right)_\delta \ \isom \ \IQo_{n-1, e} ( V ) \]
for $\delta \in \{ 0, 1 \}$. (See Remark \ref{LinkIsotSubbsFiveSix} for related discussion for rank five.) In particular, the number of maximal isotropic subbundles of $V \perp \Oc$ is exactly twice that of $V$. However, since a  bundle of the form $V \perp \Oc$ is not general in $\MO^i ( 2n , \Oc )$, we need an argument on the conservation of number.

The same argument suggests:

\begin{conjecture} \label{Conjtwo} If a general $V \in \MO ( 2n, \Oc)$ has a finite number $N^0(g, 2n)$ of rank $n$ maximal isotropic subbundles, then exactly half of them define sections of $\OG ( V )_0$ and another half those of $\OG ( V )_1$. \end{conjecture}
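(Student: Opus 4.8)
The plan is to deduce the even split from a conservation-of-number argument, degenerating a general $V$ to an orthogonal direct sum $V' \perp \Oc$ for which the two components of the maximal isotropic Quot scheme are manifestly isomorphic. Fix the relevant component $\MO^i(2n,\Oc)$ and the maximal degree $e = -\frac{1}{2}n(g-1)$, so that a general member has a finite, reduced $\IQo_{n,e}$ of cardinality $N^0(g,2n)$, split as $\IQo_{n,e}(V)_0 \sqcup \IQo_{n,e}(V)_1$ according to the two components of $\OG(V)$. Choose a general $\Oc$-valued orthogonal bundle $V'$ of rank $2n-1$; by the odd-rank case its scheme $\IQo_{n-1,e}(V')$ is finite and reduced. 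Since $w_2(V' \perp \Oc) = w_2(V')$ (as in the proof of Theorem \ref{w2deg}), the polystable point $[V' \perp \Oc]$ lies in $\MO^i(2n,\Oc)$, and by the canonical isomorphisms $\IQo_{n,e}(V' \perp \Oc)_\delta \isom \IQo_{n-1,e}(V')$ for $\delta \in \{0,1\}$ (from \cite{CH15} and \cite{CCH3}) its maximal isotropic Quot scheme is finite with two isomorphic halves.

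First I would spread this picture out in a one-parameter family. Using irreducibility of $\MO^i(2n,\Oc)$ and density of its stable locus, choose a smooth pointed affine curve $(\Delta, t_0)$ and (after an \'etale base change to secure a Poincar\'e family) a family of $\Oc$-valued orthogonal bundles $\mathcal{V} \to \Delta \times C$ with $\mathcal{V}_{t_0} \cong V' \perp \Oc$ and $\mathcal{V}_t$ general stable for $t \neq t_0$. Form the relative maximal-degree isotropic Quot scheme $\mathcal{Q} \to \Delta$. Because $\Delta$ is simply connected, the relative orthogonal Grassmann bundle $\OG(\mathcal{V}/\Delta) \to \Delta \times C$ splits globally into its two fiberwise components, with no monodromy interchange; this induces a decomposition $\mathcal{Q} = \mathcal{Q}_0 \sqcup \mathcal{Q}_1$ restricting over each $t$ to $\IQo_{n,e}(\mathcal{V}_t)_0 \sqcup \IQo_{n,e}(\mathcal{V}_t)_1$.

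The crux is to show that each $\mathcal{Q}_\delta \to \Delta$ is finite and flat in a neighbourhood of $t_0$. Properness is inherited from the relative Quot scheme, while the special fiber (via the identification with $\IQo_{n-1,e}(V')$) and the general fibers (by hypothesis) are both finite; hence $\mathcal{Q}_\delta \to \Delta$ is proper and quasi-finite, so finite. Over the regular one-dimensional base $\Delta$, flatness is then equivalent to $\mathcal{Q}_\delta$ being torsion-free, i.e. having no component or embedded point supported over $t_0$. Granting flatness, $\deg(\mathcal{Q}_\delta/\Delta)$ is constant: its value at $t_0$ equals $\mathrm{length}\, \IQo_{n-1,e}(V')$, which is independent of $\delta$, whereas its value at a general $t$ equals $|\IQo_{n,e}(\mathcal{V}_t)_\delta|$ (the fiber being reduced). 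Comparing the two forces $|\IQo_{n,e}(V)_0| = |\IQo_{n,e}(V)_1|$ for general $V$, as claimed. The same computation incidentally yields $N^0(g,2n) = 2\,N^0(g,2n-1)$, i.e. Conjecture \ref{ConjOne}.

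I expect the flatness at $t_0$ to be the main obstacle. The danger is that the strictly semistable bundle $V' \perp \Oc$ could carry ``excess'' rank $n$ isotropic subbundles that do not deform to the nearby stable fibers and would concentrate extra length over $t_0$, breaking the constancy of degree. Ruling this out calls for a deformation-theoretic analysis at $V' \perp \Oc$: one should check that the scheme-theoretic isomorphism $\IQo_{n,e}(V' \perp \Oc)_\delta \cong \IQo_{n-1,e}(V')$ is compatible with the tangent-obstruction data, so that $\mathcal{Q}_\delta$ is reduced of pure relative dimension zero near $t_0$. Alternatively, one could bypass flatness by realizing $\mathcal{Q}_\delta$ as the zero scheme of a section of a vector bundle over a proper family and invoking the virtual conservation of number, provided one verifies that the special fiber contributes only its expected class. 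Either route must confront the fact that $V' \perp \Oc$ is non-stable, so the smoothness of the isotropic Quot scheme available at a general stable bundle does not apply at the special point.
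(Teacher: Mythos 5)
First, a point of order: this statement is a \emph{conjecture} in the paper --- the authors do not prove it. What the paper offers in its place is exactly the heuristic you propose: degenerate to $V' \perp \Oc$, use the canonical isomorphisms $\IQo_{n,e}(V' \perp \Oc)_\delta \isom \IQo_{n-1,e}(V')$ from \cite{CH15} and \cite{CCH3}, and appeal to conservation of number. The paper explicitly flags the missing ingredient --- ``since a bundle of the form $V \perp \Oc$ is not general in $\MO^i(2n,\Oc)$, we need an argument on the conservation of number'' --- and defers it to a forthcoming paper. Your proposal is therefore not a proof but a more detailed write-up of the same open heuristic, and the step you yourself isolate as ``the main obstacle'' (finiteness and flatness of $\mathcal{Q}_\delta \to \Delta$ at $t_0$, i.e.\ the absence of excess length at the strictly semistable point) is precisely the gap that keeps the statement a conjecture. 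Nothing in your argument rules out either (i) isotropic subbundles of $V' \perp \Oc$ that fail to deform to nearby stable fibers --- obstructedness cannot be excluded here, since $V' \perp \Oc$ is not general in the even-rank moduli and generic smoothness of the isotropic Quot scheme does not apply at it --- or (ii) embedded or non-reduced structure over $t_0$ inflating the special fiber's length beyond $N^0(g,2n-1)$. Without both, the degree comparison yields at best inequalities of the form $|\IQo_{n,e}(\mathcal{V}_t)_\delta| \le \mathrm{length}\, \IQo_{n,e}(V'\perp\Oc)_\delta$, not the claimed equality of the two halves.

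There are also two secondary problems in the setup. A smooth pointed affine curve $\Delta$ is \emph{not} simply connected, so the assertion that the relative bundle $\OG(\mathcal{V}/\Delta)$ splits ``with no monodromy interchange'' does not follow as stated; one must either kill the possible monodromy by a further cover, or note that monodromy interchanging the two components would by itself force the two counts at a general fiber to agree (a remark that could be exploited, but which you do not make). Moreover, the family $\mathcal{V}$ through the polystable point cannot be obtained by an \'etale base change from the moduli space: near a strictly semistable point there is no Poincar\'e family, since the moduli space identifies all S-equivalent bundles, so $\mathcal{V}$ must be constructed from a versal deformation of the bundle $V' \perp \Oc$ itself, and the induced map to moduli is not \'etale at $t_0$. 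These two issues are repairable; the flatness/deformation gap above is not repaired by anything in your text, which is consistent with the authors' decision to state the result as a conjecture and leave its verification to future work.
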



In a forthcoming paper with D.\ Cheong, we propose to compute the numbers $N^i ( r, g )$ more generally and to investigate the validity of the above conjectures in general.


\begin{thebibliography}{99}

\bibitem[BS02]{BS} V.\ Balaji and C.\ S.\ Seshadri: \textsl{Semistable principal bundles - I (characteristic zero)}. J.\ of Algebra \textbf{258} (2002), 321--47.

\bibitem[Bea08]{Bea} A.\ Beauville: \textsl{Orthogonal bundles on curves and theta functions}. Ann.\ Inst.\ Fourier \textbf{56}, no.\ 5 (2008), 1405--1418.

\bibitem[BLS89]{BLS} A.\ Beauville, Y.\ Laszlo and C.\ Sorger: \textsl{The Picard group of the moduli of $G$-bundles on a curve}. Compos.\ Math.\ \textsl{112}, no.\ 2 (1998), 183--216.

\bibitem[BG10]{BG} I.\ Biswas and T.\ L.\ Gomez: \textsl{Hecke transformation for orthogonal bundles and stability of Picard bundles}. Commun.\ Analytic Geom.\ \textbf{18}, no.\ 5 (2010), 857--890.

\bibitem[CH14]{CH14} I.\ Choe and G.\ H.\ Hitching: \textsl{A stratification on the moduli spaces of symplectic and  orthogonal bundles over a curve}. International J.\ of Math., \textbf{25}, no.\ 5 (2014), 27 pp.

\bibitem[CH15]{CH15} I.\ Choe and G.\ H.\ Hitching: \textsl{Maximal isotropic subbundles of orthogonal bundles of odd rank over a curve}. International J.\ of Math., \textbf{26}, no.\ 13  (2015), 23 pp.

\bibitem[CCH21a]{CCH2} D.\ Cheong, I.\ Choe and G.\ H.\ Hitching: \textsl{Counting Lagrangian subbundles over an algebraic curve}. J.\ of Geometry and Physics (2021); DOI: 10.1016/j.geomphys.2021.104288

\bibitem[CCH21b]{CCH3} D.\ Cheong, I.\ Choe and G.\ H.\ Hitching: \textsl{Isotropic Quot schemes of orthogonal bundles over a curve}. International J.\ of Math.\ (2021); DOI: 10.1142/S0129167X21500476.

\bibitem[DN89] {DN} J.-M.\ Drezet and M.\ S.\ Narasimhan: \textsl{Groupe de Picard des vari\'et\'es de modules de fibr\'es semi-stables sur les courbes alg\'ebriques}. Invent.\ Math.\ \textbf{97}, no.\ 1 (1989), 53--94.

\bibitem[FH91]{FH} W.\ Fulton and J.\ Harris: \textsl{Representation Theory -- A First Course}. GTM 129, Springer-Verlag, New York, 1991.


\bibitem[GHS03]{GHS} T.\ Graber, J.\ Harris, and J.\ Starr: \textsl{Families of rationally connected varieties}. J.\ Amer.\ Math.\ Soc.\ \textbf{16} (2003), no.\ 1, 57-67.

\bibitem[Gro57]{Gro} A.\ Grothendieck: \textsl{Sur la classification des fibr\'es holomorphes sur la sph\`ere de Riemann}. Amer.\ J.\ Math.\ \textbf{79} (1957), 121--138.

\bibitem[Gro58]{Gro58} A.\ Grothendieck: \textsl{A General Theory of Fibre Spaces with Structure Sheaf}, second ed. NSF report, Univ.\ of Kansas, 1958. Available at \texttt{https://webusers.imj-prg.fr/\~{}leila.schneps/grothendieckcircle/Kansasnotes.pdf} .

\bibitem [GH78]{GH} P.\ Griffiths; J.\ Harris: \textsl{Principles of Algebraic Geometry}. Wiley, USA, 1994.

\bibitem [Har77]{Har} R.\ Hartshorne: \textsl{Algebraic geometry} Graduate Texts in Mathematics 52. New York-Heidelberg-Berlin: Springer-Verlag (1983).

\bibitem[Hit07]{Hit} G.\ H.\ Hitching: \textsl{Subbundles of symplectic and orthogonal vector bundles over curves}. Math. \ Nachr.  \ \textbf{280}, no. \ 13-14 (2007), 1510--1517.

\bibitem[Hir96]{Hir} Hirschowitz, A.: Probl\`emes de Brill--Noether en rang sup\'erieur. Pr\'epublications Math\'ematiques n.\ 91, Nice (1986).

\bibitem[Hol04]{Hol} Y.\ I.\ Holla: \textsl{Counting maximal subbundles via Gromov--Witten invariants}. Math.\ Ann.\ \textbf{328}, no.\ 1-2 (2004), 121--133.

\bibitem[LN03]{LNew} H.\ Lange and P.\ E.\ Newstead: \textsl{Maximal subbundles and Gromov--Witten invariants}. V.\ Lakshmibai (ed.) et al., A tribute to C. S. Seshadri. A collection of articles on geometry and representation theory. Basel: Birkh\"auser. Trends in Mathematics (2003), 310--322.

\bibitem[LM20]{LM} A.\ Lanteri and R.\ Mallavibarrena: \textsl{Projective bundles enveloping rational conic fibrations and osculation}. J.\ Pure Appl.\ Algebra \textbf{224}, no.\ 12 (2020), 20 pp.

\bibitem[LMP15]{LMP} A.\ Lanteri, R.\ Mallavibarrena, and R.\ Piene: \textsl{Inflectional loci of quadric fibrations}. J.\ Algebra \textbf{441} (2015), 363--397.


\bibitem[Mum71]{Mum} D.\ Mumford: \textsl{Theta characteristics of an algebraic curve}. Ann.\ Sci.\ \'Ec.\ Norm.\ Sup\'er.\ (4) \textbf{4} (1971), 181--192.

\bibitem[Oxb00]{Ox} W.\ M.\ Oxbury: \textsl{Varieties of maximal line subbundles}. Math. \ Proc. \ of the Cambridge Phil. \ Soc. \ \textbf{129} (2000), 9--18. 

\bibitem[Ram81]{Ram} S.\ Ramanan: \textsl{Orthogonal and spin bundles over hyperelliptic curves}. Proc.\ Indian Acad.\ Sci., Math.\ Sci.\ \textbf{90} (1981), 151--166.

\bibitem[Rth96]{Rth} A. Ramanathan: \textsl{Moduli for principal bundles over algebraic curves: I \& II}. Proc.\ Indian Acad.\ Sci., Math.\ Sci.\ \textbf{106} (1996), no.\ 3, 301--328 and no.\ 4, 421--449.



\bibitem[Serm12]{Serman} O.\ Serman: \textsl{Orthogonal and symplectic bundles on curves and quiver representations}. Brion, M.\ (ed.), ``Geometric methods in representation theory'' II. Selected papers based on the presentations at the summer school, Grenoble, France, June 16 -– July 4, 2008. Paris: Soc.\ Math.\ de France. S\'eminaires et Congr\`es \textbf{24}, pt.\ 2  (2012), 393--418.

\bibitem[Serre90]{Ser} J.-P. Serre, \textsl{Rev\^{e}tements \`{a} ramification impaire et th\^{e}ta-caract\'{e}ristiques}. C. R. Acad. Sci. Paris S\'{e}r. I Math.   \textbf{311}, no. \ 9 (1990), 547--552.


\end{thebibliography}
\end{document}